\documentclass[reqno]{amsart}

\usepackage{macros/preprintStyle}
\pdfoutput=1 %

\usepackage{bm}
\usepackage{comment}
\usepackage{adjustbox} 
\usepackage{framed}
\usepackage{xcolor}
\usepackage{amsthm}
\usepackage{tikz}
\usepackage{mathrsfs, bm}
\usepackage[scr]{rsfso}
\usepackage{dutchcal}
\usepackage{mdframed}
\usetikzlibrary{arrows.meta,positioning}
\usepackage{booktabs} 
\usepackage{multirow} 
\usepackage[most]{tcolorbox}

\newtcolorbox{assumptionbox}[1][]{
  colback=white,
  colframe=black,
  boxrule=0.4pt,   
  sharp corners,   
  fonttitle=\bfseries,
  title=#1,
  left=6pt,
  right=6pt,
  top=6pt,
  bottom=6pt
}

\usepackage{graphicx}
\usepackage{color}
\usepackage{circuitikz}
\usepackage{tikz}
\usetikzlibrary{calc,positioning,shapes}
\usetikzlibrary{patterns,decorations.pathmorphing,decorations.markings}
\usetikzlibrary{external}

\usepackage{pgfplots}
\pgfplotsset{compat=newest}
\usepackage[margin=10pt,font=small,labelfont=bf,labelsep=endash]{caption}
\usepackage{subcaption}

\usetikzlibrary{intersections, backgrounds}
\usetikzlibrary{circuits}
\usetikzlibrary{circuits.ee.IEC}
\usepackage{pgfplots}
\pgfplotsset{compat=newest}

\definecolor{color0}{rgb}{0.12156862745098,0.466666666666667,0.705882352941177}
\definecolor{color1}{rgb}{1,0.498039215686275,0.0549019607843137}
\definecolor{color2}{rgb}{0.172549019607843,0.627450980392157,0.172549019607843}
\definecolor{color3}{rgb}{0.83921568627451,0.152941176470588,0.156862745098039}
\definecolor{color4}{rgb}{0.580392156862745,0.403921568627451,0.741176470588235}
\definecolor{color5}{rgb}{0,0,0}

\definecolor{mycolor1}{rgb}{0.00000,0.44700,0.74100}
\definecolor{mycolor2}{rgb}{0.85000,0.32500,0.09800}
\definecolor{mycolor3}{rgb}{0.92900,0.69400,0.12500}
\definecolor{mycolor4}{rgb}{0.46600,0.67400,0.18800}
\definecolor{mycolor5}{rgb}{0.49400,0.18400,0.55600}

\usepackage{amssymb}

\usepackage{bm}
\usepackage{comment}
\usepackage{adjustbox} 
\usepackage{float}

\usepackage{mathtools}

\usepackage{xspace}

\usepackage{graphicx}
\usepackage{color}
\usepackage{circuitikz}
\usepackage{tikz}
\usetikzlibrary{calc,positioning,shapes}
\usetikzlibrary{patterns,decorations.pathmorphing,decorations.markings}
\usetikzlibrary{external}

\usepackage{pgfplots}
\pgfplotsset{compat=newest}
\usepackage[margin=10pt,font=small,labelfont=bf,labelsep=endash]{caption}
\usepackage{subcaption}

\usetikzlibrary{intersections, backgrounds}
\usetikzlibrary{circuits}
\usetikzlibrary{circuits.ee.IEC}
\usepackage{pgfplots}
\pgfplotsset{compat=newest}

\definecolor{color0}{rgb}{0.12156862745098,0.466666666666667,0.705882352941177}
\definecolor{color1}{rgb}{1,0.498039215686275,0.0549019607843137}
\definecolor{color2}{rgb}{0.172549019607843,0.627450980392157,0.172549019607843}
\definecolor{color3}{rgb}{0.83921568627451,0.152941176470588,0.156862745098039}
\definecolor{color4}{rgb}{0.580392156862745,0.403921568627451,0.741176470588235}
\definecolor{color5}{rgb}{0,0,0}

\definecolor{mycolor1}{rgb}{0.00000,0.44700,0.74100}
\definecolor{mycolor2}{rgb}{0.85000,0.32500,0.09800}
\definecolor{mycolor3}{rgb}{0.92900,0.69400,0.12500}
\definecolor{mycolor4}{rgb}{0.46600,0.67400,0.18800}
\definecolor{mycolor5}{rgb}{0.49400,0.18400,0.55600}

\newcommand{\smoothFunctions}[3][]{\ifthenelse{\equal{#1}{}}{\mathcal{C}^{#2}}{\mathcal{C}_{#1}^{#2}}(#3)}

\newcommand{\dist}[1][]{\ifthenelse{\equal{#1}{}}{\mathbb{D}}{#1_{\mathbb{D}}}}

\title[On the Regularity of Coupled Cluster Amplitudes]{On the Regularity and Interpolation of Coupled Cluster Amplitudes in Canonical Orbital Basis}
\author{Jonas Beck${}^\star$  \and Benjamin Stamm${}^\dagger$}

\address{${}^{\star}$ Jonas Beck, Institute of Applied Analysis and Numerical Simulation, University of Stuttgart, Pfaffenwaldring 57, 70569 Stuttgart, Germany}
\email{jonas.beck@ians.uni-stuttgart.de}
\address{${}^{\dagger}$Benjamin Stamm, Institute of Applied Analysis and Numerical Simulation, University of Stuttgart, Pfaffenwaldring 57, 70569 Stuttgart, Germany}
\email{benjamin.stamm@ians.uni-stuttgart.de}

\date{\today}

\begin{document}

\begin{abstract}
Arguably the most widely used approaches for obtaining highly accurate molecular ground-state energies are coupled cluster methods. Despite introducing two layers of approximation, a linear and a nonlinear one, coupled cluster methods remain computationally intensive, with the complexity scaling as $\mathcal{O}(\mathrm{poly}(N))$, where $N$ is the number of electrons. Moreover, this method must be applied over a large set of different nuclear coordinates in order to study certain chemical phenomena. Therefore, in this work, we investigate the regularity of single-reference coupled cluster amplitudes with respect to nuclear coordinate displacements, with the aim of enabling interpolation or extrapolation approaches that rely on only a limited number of reference geometries. We show that, in theory, under certain non-degeneracy assumptions on the Hartree–Fock level of theory, and the coupled cluster level of theory the amplitudes behave real analytic. Furthermore, we analyze the artifacts that arise in practical calculations that use canonical orbitals, which hinder this high degree of regularity, and suggest strategies to mitigate these issues. Finally, we validate our findings through numerical experiments by interpolating the amplitudes and comparing the performance of the interpolants with that of the exact amplitudes.  
\end{abstract}

\maketitle
{\footnotesize \textsc{Keywords:} Coupled cluster theory, Hartree–Fock theory, one-dimensional interpolation, electronic structure theory, quantum chemistry, parametric eigenvalue problem.} 

{\footnotesize \textsc{AMS subject classification: } Primary 81V55, 41A05; Secondary 65D05, 81-08.}


\section{Introduction}\label{sec:intro}

In modern computational chemistry, a primary objective is to solve electronic structure problems using \textit{ab initio} methods. The goal is to predict molecular behavior from first principles of quantum mechanics, without relying on costly experiments. Since the early twentieth century, with the birth of quantum mechanics, extensive efforts have been devoted to making the exponentially scaling electronic Hamiltonian accessible to such computational methods. Historically, the Hartree–Fock method was the first approach to be analyzed in depth, in the Born-Oppenheimer setting \cite{Slater, Hartree_1928, FockNherungsmethodeZL}. This method treats the complex electronic correlation as a mean field interaction, resulting in either a minimization problem of a non-linear functional or in a non-linear eigenvector problem, which ultimately provide a wave function that optimally describes the system neglecting most of the electronic interaction effects \cite{hartree1957calculation, roothaan,szabo,helgakar}.   Arguably the most successful and widely used ansatz dates back to Hohenberg and Kohn \cite{HohenbergUKohn}. Their initial works paved the way for Density Functional Theory (DFT), the state-of-the-art, electronic density–based, approach that ultimately earned Kohn and John Pople a shared Nobel Prize in 1998. Even though DFT is the most computationally efficient option to date, and often the only way to model larger systems, a key disadvantage is the lack of a constructive method to improve the modeling error at hand. Therefore, whenever highly accurate results are required, one must use wave function–based methods.  \\ 
\newline
The coupled cluster method is among the most commonly used wave function–based approaches for calculating ground-state properties of molecules. As a post-Hartree–Fock method, it systematically improves a previously obtained Hartree–Fock determinant, approaching the exact limit within the discretized space. In particular, in the single-reference regime, i.e. when static correlation is low, the coupled cluster approach is highly effective. Here, the coupled cluster ansatz takes the form of a single exponential of a cluster operator, which excites the Hartree–Fock determinant into a state that accounts for the electronic correlation effects.  Initially, the exponential ansatz for a wavefunction was developed by Hugenholtz \cite{HUGENHOLTZ1957481} and Hubbard \cite{Hubbard} in the study of many-body quantum systems. Already at this early stage of development, the \textit{size-extensivity} of such an ansatz had been demonstrated. It was later introduced into nuclear physics by Coester and Kümmel \cite{Coest,COESTER1958421}, who also provided a more accessible derivation using the terminating Baker–Campbell–Hausdorff series. Finally, the method was adapted to quantum chemistry through the work of Čížek \cite{Cizek} and others, including Paldus et al. \cite{Paldus} and Sinanoğlu \cite{Sinanoglu}. The high accuracy comes at the cost of substantial computational overhead. While the Hartree–Fock method has a time scaling of $\mathcal{O}(N^4)$, where $N$ is the system size, the full coupled cluster method exceeds any polynomial time scaling. To render the coupled cluster approach computationally feasible, an additional simplification is introduced. The cluster operators, that build the exponential ansatz, are restricted to a certain excitation level, chosen according to the user’s requirements and resources. This systematic truncation of the allowed excitations within the exponential framework of coupled cluster preserves \textit{size-consistency}, which is arguably one of the most important properties a numerical method in quantum chemistry should possess and represents a key advantage over other wave function–based approaches. By far the most commonly used are CCSD (coupled cluster singles and doubles method) and CCSD(T) (coupled cluster singles, doubles and perturbed triples method), which currently offer the best compromise between computational complexity and accuracy. The latter is widely considered the 'gold standard' and extends the CCSD ansatz by incorporating connected triple excitations through perturbative methods \cite{CCSD_perT}.\\
\newline
A rigorous mathematical analysis of the coupled cluster method started with Reinhold Schneider's work \cite{Schneider2009} on the discretized coupled cluster equations in the year 2009. Subsequently, he and his former PhD student Thorsten Rohwedder published two articles\cite{rohwedder1,rohwedder2}, that placed the continuous coupled cluster equations on a rigorous mathematical foundation and gave an investigation on the well-posedness and an error analysis based on a local, strong monotonicity assumption. An alternative approach to the well-posedness question and the error analysis via the Jacobian of the coupled cluster function was provided in the works of Hassan, Maday and Wang \cite{hassan2023analysissinglereferencecoupled,hassan2025analysissinglereferencecoupled}. Beyond the numerical analysis developed over the past 15 years, the community of mathematicians working on the numerical aspects of the electronic structure problem has adopted a range of perspectives from different mathematical disciplines to better understand the complex nonlinear nature of the coupled cluster equations, spanning areas from algebraic geometry, topological degree theory up to graph theory \cite{faulstich2024algebraicvarietiesquantumchemistry,Faulstich,Csirik_2023,Csirik_part2}.\\
\newline
In this contribution we want to deal with the coupled cluster problem in the parameterized setting. To be more precise, we aim to investigate what level of regularity can be expected for the coupled cluster amplitudes when expressed in the canonical orbital basis as functions of the nuclear positions. The amplitudes in this basis are those typically produced by virtually all modern quantum chemistry software packages when performing a coupled cluster calculation. Although the results on the Hartree–Fock level of theory can in principle be decoupled from the results on the coupled cluster level of theory, they are strongly linked in practice. Any regularity assumption needed for the latter will be directly provided by the underlying Hartree–Fock calculation. The canonical orbitals are provided by a Hartree–Fock calculation, where the Fock matrix is diagonalized and its generalized eigen functions are ordered by the \textit{aufbau principle}. Therefore, we investigate the regularity on the parameters on both levels of theory. In Section \ref{sec:setting} we review the pointwise setting of the coupled cluster method, i.e. we introduce all the necessary expressions and equations on the Hartree–Fock level of theory and post-Hartree–Fock level of theory alike. Subsequently, in Section \ref{sec:parameter_cc} we establish via Theorem \ref{IFT} and Theorem \ref{analytic_C} under what circumstances one can expect high regularity of the canonical orbitals. While Theorem \ref{IFT} is a specific version of the implicit function theorem on Riemannian manifolds and therefore requires a non-degeneracy assumption on the Riemannian Hessian, Theorem \ref{analytic_C} additionally makes use of Kato's perturbation theory in finite dimensions. Furthermore, Theorem \ref{analytic_amplitudes} shows that under a non-degeneracy assumption on the solution to the coupled cluster equations the regularity of the underlying orbitals gets inherited. The required non-degeneracy assumption is a natural extension of the standard non-degeneracy condition on eigenvalues from the linear problem, carried over to the nonlinear coupled cluster equations at any level of truncation. Motivated by the theoretical results, Section \ref{sec:interpol} introduces interpolation schemes, that put the excitation amplitudes and its regularity to the test. Although in theory one can expect very regular amplitudes in the nuclear coordinates, orbital energy crossings introduce artificial discontinuities that are addressed in Theorem \ref{MO_AO_makes_high_reg}. An algorithmic strategy is introduced that enables interpolation of the coupled cluster amplitudes. As a proof of concept, Section  \ref{NumExp} presents numerical experiments on selected amino acids based on the tensor transformation framework, which exhibit exponential error decay with an increasing number of Chebyshev nodes (see Figure \ref{fig:error_decay})). Finally, Figure \ref{fig:iter_n_decay} demonstrates that the interpolated amplitudes can be effectively used as initial guesses for the quasi-Newton method, which is commonly employed to compute coupled-cluster solutions in practice.
\section{Setting}\label{sec:setting}
In quantum chemistry the theoretical investigation of the ground state energy of a molecular system is usually done in the Born-Oppenheimer picture, where the nuclei of the molecule are treated as constant parameters and only the electron-electron and electron-nuclei interaction is taken into account. The Hamiltonian describing this setup for a molecule, with the positions of the nuclei $\lbrace R_j\rbrace_{j=1}^M $ and corresponding atomic numbers $\lbrace Z_{j}\rbrace_{j=1}^M$, is given by 

\begin{align}\label{Hamil}
    \mathcal{H}:=-\frac{1}{2}\sum^{N}_{i=1}\Delta_{x_i}+\sum^N_{i=1}V(x_i)+\sum_{i<j}\frac{1}{|x_i-x_j|}.
\end{align}
Here the Coulomb-type potential $V$ induced by the nuclei of the system is described by
\begin{align}\label{nuclei_elec_pot}
    V(x):= -\sum^{M}_{j=1}\frac{Z_j}{|x-R_j|}.
\end{align}
The electronic Hamiltonian $\mathcal{H}:\mathrm{H}^2(\mathbb{R}^{3N})\subset \mathrm{L}^2(\mathbb{R}^{3N}) \longrightarrow \mathrm{L}^2(\mathbb{R}^{3N}) $ is a bounded from below, self-adjoint operator with form domain $\mathrm{H}^1(\mathbb{R}^{3N})\subset \mathrm{L}^2(\mathbb{R}^{3N})$ (see, e.g., \cite{Kato1951FundamentalPO}). $\textbf{Note on the notation:}$ We use the notation $\langle \cdot,\cdot \rangle_{\mathrm{L}^2}$ to denote the inner product in both the one-particle space $\mathrm{L}^2(\mathbb{R}^3)$ and the many-particle space $\mathrm{L}^2(\mathbb{R}^{3N})$, with the precise meaning determined by the context. Although the qualitative structure of the spectrum of $\mathcal{H}$ is well understood in the case 
$\sum_{j=1}^M Z_j\ge N$, namely, that it separates into a discrete spectrum $\sigma_{\text{disc}}(\mathcal{H})$ consisting of eigenvalues accumulating at the ionization threshold $\Sigma$, beyond which there is only the essential spectrum $\sigma_{\text{ess}}(\mathcal{H})=[\Sigma, \infty)$ \cite{Hunz}, computing the discrete spectrum remains a major challenge. In particular the \textit{ground state energy} $\mathcal{E}_0:= \inf\sigma_{\text{disc}}(\mathcal{H})$ is of interest. From the mathematical point of view, calculating the ground state energy boils down to solving the time-independent Schrödinger equation
\begin{align}\label{tischrödi}
    \mathcal{H}\psi=\mathcal{E}_0\psi,\quad \|\psi\|_{\mathrm{L}^2}=1,
\end{align}
i.e. solving for an eigenstate $\psi \in \mathrm{H}^2(\mathbb{R}^{3N})$ and the ground state energy $\mathcal{E}_0\in \mathbb{R}$. In this work, we focus on methods that approximately solve the ground-state problem using wave function-based approaches. In contrast to the famous density functional theory (DFT), that gives only an electronic density describing the system, wavefunction methods aim to also obtain a proper eigenfunction. Since the electronic Hamiltonian \eqref{Hamil} is a very complex object, the community of quantum chemists split wave function methods in several stages of approximation, called \textit{levels of theory} in order to systematically improve the accuracy of the model by increasing the complexity. Usually the lowest level of theory is the Hartree–Fock level of theory, which scales like $\mathcal{O}(N^4)$ with system size. The Hartree–Fock method is a mean-field method and hence neglects the electron correlation. As a follow up on a Hartree–Fock calculation, either perturbative methods like \textit{Møller–Plesset perturbation theory} (MP) or methods like \textit{configuration interaction} (CI) or \textit{coupled cluster} (CC) can be applied. All these methods have in common, that they use a reference wavefunction provided by the Hartree–Fock level of theory.

\subsection{Discretization on the Hartree–Fock Level of Theory}\label{disc_HF_Theory}
The Hartree–Fock model aims to find the best approximation to the true solution of the time-independent $N$-body Schrödinger equation \eqref{tischrödi}, by means of a non-interacting system. A suitable non-interacting  system can be obtained by minimizing the \textit{Hartree-Fock energy functional}
\begin{align}\label{cont_hf_func}
    E^{\mathrm{HF}}(\varphi_1,\ldots,\varphi_N):=\sum^N_{i=1}\int_{\mathbb{R}^3}\frac{1}{2}|\nabla \varphi_i(x)|^2+V|\varphi_i(x)|^2\mathrm{d}x +\frac{1}{2}\iint_{\mathbb{R}^6}\frac{\rho(x)\rho(y)-|\rho(x,y)|^2}{|x-y|}\mathrm{d}x\mathrm{d}y,
\end{align}
where for all $i=1,\ldots,N$ we have $\varphi_i\in \mathrm{H}^1(\mathbb{R}^3)$, $\rho(x):=\sum^{N}_{i=1}|\varphi_i(x)|^2$ and $\rho(x,y):=\sum^N_{i=1}\varphi_i(x)\varphi^*_i(y)$ under the orthonormality constraint that $\int \varphi_i\varphi_j\,\mathrm{d}x =\delta_{ij}$ for $i,j=1,\ldots, N$.
After minimizing the Hartree–Fock functional \eqref{cont_hf_func}, one obtains an optimal non-interacting admissible approximation of the solution of the true $N$-body Schrödinger equation by choosing the Slater determinant \begin{align}\label{HF_slater}
    \Psi=\frac{1}{\sqrt{N!}}\mathrm{det}\big((\varphi_i(x_j))_{i,j=1}^N\big)\in \mathrm{H}^1(\mathbb{R}^{3N}).
\end{align}
The previously introduced picture on the restricted energy minimization can be considered the \textit{orbital perspective}. There is a second formulation that is widely used, which we refer to as the \textit{density perspective}. Instead of searching for a set of $\mathrm{L}^2$-orthonormal orbitals $\varphi_i \in \mathrm{H}^1(\mathbb{R}^3)$, one can rather focus on finding the orthogonal projection on the space spanned by the solution orbitals. For a fixed set $\lbrace \varphi_i\rbrace^N_{i=1}\subset \mathrm{H}^1(\mathbb{R}^3)$, the orthogonal projector onto the space spanned by these functions has the form
\begin{align*}
    \mathscr{D}=\sum_{i=1}^N \langle\varphi_i,\,\cdot\, \rangle_{\mathrm{L}^2}\,\varphi_i.
\end{align*}
For the same set of functions, we can define the associated quantities that were previously used for the Hartree–Fock functional in the orbital perspective
\begin{align*}
    \rho_{\mathscr{D}}(x,y):= \sum^{N}_{i=1}\varphi_i(x)\varphi^*_i(y),\quad \rho_{\mathscr{D}}(x)=\rho_{\mathscr{D}}(x,x)=\sum^N_{i=1}|\varphi_i(x)|^2.
\end{align*}
The Hartree–Fock energy functional in the electronic density description is
\begin{align}\label{cont_dens_HF_func}
    \mathcal{E}^{\mathrm{HF}}(\mathscr{D}):= \text{Tr}((h+\tfrac{1}{2}G(\mathscr{D}))\mathscr{D}),
\end{align}
where $h:=-\frac{1}{2}\Delta+V$ is the core Hamiltonian and 
\begin{align*}
    (G(\mathscr{D})\psi)(x):=\Big(\rho_{\mathscr{D}} \ast \frac{1}{|y|}\Big)(x)\psi(x)-\int_{\mathbb{R}^3}\frac{\psi(y)\rho_{\mathscr{D}}(x,y)}{|x-y|}\mathrm{d}y,
\end{align*}
is the electron-electron interaction operator. For the functional \eqref{cont_dens_HF_func} the minimization is carried out over the rank $N$ projectors $\mathcal{P}=\big\lbrace \mathscr{D}\in \mathscr{S}^1: \text{Ran}(\mathscr{D})\subset \mathrm{H}^1(\mathbb{R}^3),\quad \mathscr{D}^2=\mathscr{D}=\mathscr{D}^*, \quad \text{Tr}(\mathscr{D})=N\big\rbrace$, where $\mathscr{S}^1$ are the trace-class operators.\\
\newline In practice, the minimization of the energy functional \eqref{cont_hf_func} or \eqref{cont_dens_HF_func} is restricted to a finite-dimensional subspace $\mathcal{V}_{\mathrm{n}_{\text{b}}}\subset \mathrm{H}^1(\mathbb{R}^3)$, where $\text{dim}(\mathcal{V}_{\text{n}_\text{b}})=\text{n}_\text{b}<\infty$. A set of basis functions $\mathfrak{A}=\lbrace \chi_i\rbrace_{i=1}^{\text{n}_\text{b}}\subset \mathrm{H}^1(\mathbb{R}^3)$ with $\text{span}(\mathfrak{A})=\mathcal{V}_{\text{n}_\text{b}}$ is chosen as the basis of the discretization and referred to as \textit{atomic orbitals}. In general the atomic orbitals are not an $\mathrm{L}^2$-orthonormal set, hence they have a non trivial overlap, which is collected in the positive definite, symmetric  \textit{overlap matrix} $\mathbf{S}=(\mathbf{S}_{ij})_{i,j=1}^{\text{n}_\text{b}}$ with components
\begin{align}\label{overlap}
    \mathbf{S}_{ij}=\int_{\mathbb{R}^3}\chi_i^*(x)\chi_j(x)\mathrm{d}x.
\end{align}
The functional \eqref{cont_dens_HF_func} is then projected onto the atomic orbital basis and has the discretized form
\begin{align}\label{disc_en_func}
    \mathcal{E}(\mathscr{D}):=\mathcal{E}^{\mathrm{HF}}(\mathscr{D})=\text{Tr}((\mathbf{h}+\tfrac{1}{2}\mathbf{G}(\mathscr{D}))\mathscr{D}),
\end{align}
where $\mathbf{S}^{1/2}\mathbf{h}\mathbf{S}^{1/2}=\tilde{\mathbf{h}}=(\tilde{\mathbf{h}}_{ij})_{i,j=1}^{\text{n}_\text{b}}$ and $\mathbf{S}^{1/2}\mathbf{G}(\mathbf{S}^{1/2}\mathscr{D}\mathbf{S}^{1/2})\mathbf{S}^{1/2}=\tilde{\mathbf{G}}(\mathscr{D})=(\tilde{\mathbf{G}}_{ij}(\mathscr{D}))_{i,j=1}^{\text{n}_\text{b}}$ are matrices with the components
\begin{align*}
    \tilde{\mathbf{h}}_{ij}=\int_{\mathbb{R}^3}\chi_i(x)h\chi_j(x)\mathrm{d}x,\quad \tilde{\mathbf{G}}_{ij}(\mathscr{D})=\sum_{\ell,k=1}^{\text{n}_\text{b}}\mathscr{D}_{\ell k}\Big[ Q_{ij}^{k\ell }-\tfrac{1}{2}Q_{i\ell}^{kj}\Big],
\end{align*}
and with the two-body integrals
\begin{align*}
    Q_{ij}^{k\ell}= \iint_{\mathbb{R}^6} \frac{\chi^*_{i}(x)\chi_j(x)\chi^*_{k}(y)\chi_{\ell}(y)}{|x-y|}\mathrm{d}x\mathrm{d}y.
\end{align*}
In the discretized setting, we aim to minimize \eqref{disc_en_func} over the set of all \textit{density matrices}
\begin{align}\label{grass}
    \mathcal{G}:=\mathcal{G}(\text{n}_{\text{b}},N)=\big\lbrace \mathscr{D}\in \mathbb{R}^{\text{n}_\text{b}\times \text{n}_\text{b}}: \mathscr{D}^2=\mathscr{D}=\mathscr{D}^*,\quad \text{Tr}(\mathscr{D})=N\big\rbrace.
\end{align}
The set of density matrices $\mathcal{G}$ is a topological manifold, a \textit{Grassmannian manifold}, that admits a real analytic structure. Henceforth, we will always assume that the Grassmannian manifold carries a real analytic structure. Furthermore, we endow the Grassmann manifold $\mathcal{G}$ with its standard Riemannian metric $g_{\mathscr{D}}(A,B)=\text{Tr}(A^\top B) $, which is independent of $\mathscr{D}\in \mathcal{G}$, hence we omit the base point reference. Note that the overlap $\mathbf{S}$ usually is treated decoupled from the energy functional \eqref{disc_en_func}, i.e. one rather works with the matrices $\tilde{\mathbf{h}}$ and $\tilde{\mathbf{G}}$ in the functional, which than results in a manifold that depends on $\mathbf{S}$. Nevertheless, we adopt the perspective that the functional carries the explicit dependence on the overlap, such that the metric remains overlap independent. Therefore, independent of $\mathbf{S}$, the admissible densities are of the Grassmannian \eqref{grass} endowed with the standard trace metric.  In the chemistry community the energy functional with respect to $\mathbf{h}$ and $\mathbf{G}$ is considered to be in \textit{orthogonal atomic orbital basis}. Furthermore, the Grassmannian is a Riemannian submanifold of the flat space    $\mathbb{R}^{\text{n}_\text{b}\times \text{n}_\text{b}}$ equipped with the trace metric. For any $\mathscr{D}\in\mathcal{G}$ the corresponding tangent space is given by
\begin{align}
    T_\mathscr{D}\mathcal{G}=\big\lbrace X \in \mathbb{R}^{\text{n}_\text{b}\times \text{n}_\text{b}}: X^\top=X, \quad \mathscr{D}X+X\mathscr{D}=X\big\rbrace.
\end{align}
The fact that $\mathcal{G}\subset \mathbb{R}^{\text{n}_\text{b}\times \text{n}_\text{b}} $ is a Riemannian submanifold, provides at any base point $\mathscr{D}\in \mathcal{G}$ the orthogonal decomposition 
\begin{align*}
    T_\mathscr{D}\mathcal{G}\oplus \big(T_\mathscr{D}\mathcal{G}\big)^\perp = T_\mathscr{D}\mathbb{R}^{\text{n}_\text{b}\times \text{n}_\text{b}} \cong \mathbb{R}^{\text{n}_\text{b}\times \text{n}_\text{b}},
\end{align*}
where $\big(T_\mathscr{D}\mathcal{G}\big)^\perp=\big\lbrace Y\in \mathbb{R}^{\text{n}_\text{b}\times \text{n}_\text{b}}: \text{Tr}(Y^\top X)=0 \,\,\,\text{for all}\,\,\, X\in T_\mathscr{D}\mathcal{G}\, \big \rbrace$. This decomposition allows at any point $\mathscr{D}\in \mathcal{G}$ to project from the ambient space $\mathbb{R}^{\text{n}_\text{b}\times \text{n}_\text{b}}$ onto the tangent space $T_\mathscr{D}\mathcal{G}$ via the projection operator
\begin{align}\label{proj}
   P_\mathscr{D}: \mathbb{R}^{\text{n}_\text{b}\times \text{n}_\text{b}} \longrightarrow T_\mathscr{D}\mathcal{G}, \quad X \mapsto [\mathscr{D},[\mathscr{D},X]].
\end{align}
At this point we can come back to the discretized Hartree–Fock functional in atomic orbital description. Note that we have not yet completely specified the domain of the functional \eqref{disc_en_func}. We define the ambient energy functional
\begin{align}\label{ambient energy functional}
    E:\mathbb{R}^{\text{n}_\text{b}\times \text{n}_\text{b}}\longrightarrow \mathbb{R}, \quad X \mapsto \text{Tr}((\mathbf{h}+\tfrac{1}{2}\mathbf{G}(X))X), 
\end{align}
and the energy functional on the Grassmannian $\mathcal{E}:\mathcal{G}\longrightarrow \mathbb{R}$ as the restriction $\mathcal{E}=E\,|_\mathcal{G}$. A necessary condition for $\mathscr{D}\in \mathcal{G}$ to be a minimizer of $\mathcal{E}$ one requires
\begin{align*}
    \nabla_\mathcal{G}\mathcal{E}(\mathscr{D})=0\in T_\mathscr{D}\mathcal{G},
\end{align*}
where $\nabla_\mathcal{G}\mathcal{E}:\mathcal{G}\longrightarrow T\mathcal{G}$ is the Riemannian gradient. The submanifold structure of $\mathcal{G}$ gives at any $\mathscr{D}\in \mathcal{G}$ a simple connection between both gradients via \eqref{proj}, i.e.
\begin{align}
    \nabla_\mathcal{G}\mathcal{E}(\mathscr{D})=P_\mathscr{D}\nabla E(\mathscr{D}).
\end{align}
Since the flat gradient $\nabla E(\mathscr{D})=\mathbf{F}(\mathscr{D})=\mathbf{h}+\mathbf{G}(\mathscr{D})$ is the well known Fock matrix, we have for the Riemannian gradient
\begin{align}\label{riem_grad}
    \nabla_\mathcal{G}\mathcal{E}(\mathscr{D})=P_\mathscr{D} \mathbf{F}(\mathscr{D})=[\mathscr{D},[\mathscr{D},\mathbf{F}(\mathscr{D})]].
\end{align}

\begin{lemma}\label{Root_dens_corr}
    A density matrix $\mathscr{D}\in \mathcal{G}$ is a critical point to the energy functional $\mathcal{E}:\mathcal{G}\longrightarrow \mathbb{R}$, defined through \eqref{disc_en_func}, if and only if there exists $C, \Lambda\in \mathbb{R}^{\mathrm{n}_\mathrm{b}\times \mathrm{n}_\mathrm{b}}$, such that $\mathscr{D}$ is the projector onto the first $N$ columns of $C$ and
    \begin{align}\label{roothaans_orth}
        \mathbf{F}(\mathscr{D})C=C\Lambda, \quad C^\top C=\mathrm{I}.
    \end{align}
\end{lemma}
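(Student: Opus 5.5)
The plan is to reduce criticality to the commutator condition $[\mathbf{F}(\mathscr{D}),\mathscr{D}]=0$ and then use simultaneous diagonalization of commuting symmetric matrices. By \eqref{riem_grad}, $\mathscr{D}$ is critical if and only if $[\mathscr{D},[\mathscr{D},\mathbf{F}(\mathscr{D})]]=0$. Write $\mathbf{F}=\mathbf{F}(\mathscr{D})$ and $\mathscr{D}^\perp=\mathrm{I}-\mathscr{D}$. Expanding with $\mathscr{D}^2=\mathscr{D}$ gives
\begin{align*}
[\mathscr{D},[\mathscr{D},\mathbf{F}]]=\mathscr{D}\mathbf{F}-2\mathscr{D}\mathbf{F}\mathscr{D}+\mathbf{F}\mathscr{D}=\mathscr{D}\mathbf{F}\mathscr{D}^\perp+\mathscr{D}^\perp\mathbf{F}\mathscr{D}.
\end{align*}
The two summands are the off-diagonal blocks with respect to the splitting $\mathrm{Ran}(\mathscr{D})\oplus\mathrm{Ran}(\mathscr{D}^\perp)$. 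Multiplying on the left by $\mathscr{D}$ and on the right by $\mathscr{D}^\perp$ isolates the first block. Hence the double commutator vanishes if and only if $\mathscr{D}\mathbf{F}\mathscr{D}^\perp=0=\mathscr{D}^\perp\mathbf{F}\mathscr{D}$. This is equivalent to $\mathbf{F}\mathscr{D}=\mathscr{D}\mathbf{F}\mathscr{D}=\mathscr{D}\mathbf{F}$, that is, to $[\mathbf{F},\mathscr{D}]=0$. (It is also worth checking that $P_\mathscr{D}$ in \eqref{proj} is indeed the tangent projection, but only the vanishing criterion is used here.)

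For the forward direction, assume $[\mathbf{F},\mathscr{D}]=0$. Both matrices are real symmetric: $\mathbf{F}$ is symmetric because $\mathbf{h}$ and $\mathbf{G}(\mathscr{D})$ are, by the symmetries of the two-body integrals in the real setting. Hence $\mathbf{F}$ leaves $\mathrm{Ran}(\mathscr{D})$ (dimension $N$, since $\mathrm{Tr}\,\mathscr{D}=N$) and its orthogonal complement invariant. Diagonalize the restriction of $\mathbf{F}$ to each invariant subspace by an orthonormal eigenbasis, taking the $N$ vectors spanning $\mathrm{Ran}(\mathscr{D})$ first. Collecting these as columns of $C$ gives $C^\top C=\mathrm{I}$ and $\mathbf{F}C=C\Lambda$ with $\Lambda$ diagonal. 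Moreover $\mathscr{D}=C_{:,1:N}C_{:,1:N}^\top$ is the projector onto the first $N$ columns.

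For the converse, assume $\mathbf{F}C=C\Lambda$ with $C$ orthogonal and $\mathscr{D}=C_N C_N^\top$, where $C_N$ denotes the first $N$ columns. The point needing care is that the statement does not require $\Lambda$ to be diagonal. Since $\Lambda=C^\top\mathbf{F}C$ is symmetric, I need the column span of $C_N$ to be $\mathbf{F}$-invariant, i.e. $\Lambda$ block diagonal. I would read $\Lambda$ as diagonal, matching the intended eigenvalue meaning of \eqref{roothaans_orth}, or else as block diagonal with respect to the $N/(\mathrm{n_b}-N)$ splitting; this is the one point where the statement must be interpreted. Under that reading, $\mathbf{F}C_N=C_N\Lambda_{11}$ and
\begin{align*}
\mathbf{F}\mathscr{D}=C_N\Lambda_{11}C_N^\top=(\mathbf{F}C_N C_N^\top)^\top=\mathscr{D}\mathbf{F},
\end{align*}
using the symmetry of $\mathbf{F}$ and of $\Lambda_{11}$. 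Thus $[\mathbf{F},\mathscr{D}]=0$, and by the first step $\mathscr{D}$ is critical.

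The main obstacle is therefore not computational. It is making the converse well-posed: without the diagonal (or block-diagonal) structure of $\Lambda$, every orthogonal $C$ satisfies \eqref{roothaans_orth} trivially with $\Lambda=C^\top\mathbf{F}C$. I would state this interpretation explicitly at the start of the proof.
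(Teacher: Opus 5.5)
Your proof is correct. The paper gives no proof of its own; it only refers to the literature for an analogous result. Your route is the standard argument for this characterization: reduce criticality to $[\mathbf{F}(\mathscr{D}),\mathscr{D}]=0$, then diagonalize $\mathbf{F}(\mathscr{D})$ separately on $\mathrm{Ran}(\mathscr{D})$ and on its orthogonal complement.

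Your caveat about the converse is also right and worth keeping. $\Lambda$ must be read as diagonal, as the paper implicitly does when it next writes $\Lambda=\mathrm{diag}(\lambda_1,\ldots,\lambda_{\mathrm{n}_\mathrm{b}})$, or at least as block diagonal with respect to the $N/(\mathrm{n}_\mathrm{b}-N)$ split. Otherwise $\Lambda:=C^\top\mathbf{F}(\mathscr{D})C$ satisfies the equations for every orthogonal $C$, and the ``if'' direction would declare every $\mathscr{D}\in\mathcal{G}$ critical.
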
 A proof for a result analogous to Lemma \ref{Root_dens_corr} can be found in e.g. \cite{refId0}. Note that \eqref{roothaans_orth} corresponds to the \textit{Roothaan equations} in the orthogonal atomic orbital basis. The solution matrix $C$ is referred to as the \textit{orthogonal coefficient matrix}. An advantage of the Hartree–Fock method over density functional theory, is that one not only obtains a density $\mathscr{D}\in \mathcal{G}$ but also a wavefunction that approximately describes the ground state of the fermionic system. The wavefunction is obtained by selecting a corresponding representative of the density matrix from the Stiefel manifold. In practice this can be done by solving the Euler-Lagrange equations \eqref{roothaans_orth} corresponding to the minimization problem of the Hartree–Fock functional. Typically, one orders the eigenvectors in increasing order by its eigenvalues, counting multiplicities, i.e. the entries of the diagonal matrix $\Lambda =\text{diag}(\lambda_1,\ldots,\lambda_{\text{n}_\text{b}}) $, containing the orbital energies, is ordered according to
\begin{align}
    \lambda_1\le \lambda_2\le\ldots\le \lambda_{\text{n}_\text{b}}.
\end{align}
This choice of gauge is referred to as the \textit{aufbau principle}. The coefficient matrix $C$ containing the orthonormalized eigenvectors is then split in two parts, the matrix $C_{\text{occ}}\in \mathbb{R}^{\text{n}_\text{b}\times N}$ containing the \textit{occupied} orbital coefficients and $C_{\text{virt}}\in \mathbb{R}^{\text{n}_\text{b}\times(\text{n}_\text{b}-N) }$ containing the \textit{virtual} orbital coeffcients

\begin{align}\label{occ_virt_C_matrix}
C = \left(
\begin{array}{cccc|ccc}
C_{11} & C_{12} & \cdots & C_{1\,N} & C_{1\,N+1} & \cdots & C_{1\text{n}_{\text{b}}} \\
C_{21} & C_{22} & \cdots & C_{2\,N} & C_{2\,N+1} & \cdots & C_{2\text{n}_{\text{b}}} \\
\vdots & \vdots & \ddots & \vdots & \vdots & \ddots & \vdots \\
C_{\text{n}_{\text{b}} 1} & C_{\text{n}_{\text{b}} 2} & \cdots & C_{\text{n}_{\text{b}}\,N} & C_{\text{n}_{\text{b}}\,N+1} & \cdots & C_{\text{n}_{\text{b}} \text{n}_{\text{b}}}
\end{array}
\right)
\nonumber \\
\underbrace{\hspace{4cm}}_{\displaystyle C_\mathrm{occ}}
\underbrace{\hspace{3.8cm}}_{\displaystyle C_\mathrm{virt}}\,\,
\end{align}
Here, it is important to note that it is necessary to have a HOMO-LUMO gap $\gamma>0$, i.e the $N$th and $(N+1)$th orbital energies satisfy
\begin{align}
    \lambda_{N+1}-\lambda_N\ge\gamma>0.
\end{align}
Otherwise, it is impossible to distinguish between the occupied columns and the virtual columns of \eqref{occ_virt_C_matrix}. 
The coefficient matrix provides the transformation from the atomic orbitals $\mathfrak{A}$, which we chose in the very beginning, to the so called \textit{molecular orbitals} $\mathfrak{M}$. While the atomic orbitals can be considered a matter of choice, the molecular orbitals, or at least the occupied molecular orbitals, provide us the Hartree–Fock Slater determinant \eqref{HF_slater} in the framework of our discretization choice. First and foremost we build the molecular orbitals $\mathfrak{M}$ via the \textit{linear combination of atomic orbitals} ansatz (LCAO), i.e. $\mathfrak{M}=\lbrace\psi_i\rbrace_{i=1}^{\text{n}_\text{b}}\subset \text{span}\,\mathfrak{A}$ with

\begin{align}
    \psi_i=\sum_{j=1}^{\text{n}_\text{b}}C_{ji}\chi_j,\qquad \qquad(\forall i\le \text{n}_\text{b}),
\end{align}
where the coefficients are exactly the coefficients provided by the matrix $C$ \eqref{occ_virt_C_matrix} obtained as a solution to the Hartree–Fock method in the orthogonal orbital setting. We further distinguish between occupied and virtual molecular orbitals in the same manner as we did with the $C$ matrix itself. A molecular orbital $\psi_i$ is called an \textit{occupied orbital} if the coefficient column belongs to $C_{\text{occ}}$ and virtual if the opposite is the case. Therefore, we write $\mathfrak{M}=\lbrace \psi_i\rbrace_{i=1}^{N}\cup\lbrace \psi_a\rbrace_{a=N+1}^{\text{n}_\text{b}}$, where $\psi_q$ corresponds to the $q$th-column of $C$ for any $q\le \text{n}_\text{b}$. Note that as an usual convention in the quantum chemist community one denotes by 
\begin{align}\label{indixeing_occ_virt}
    a,b,\ldots \text{virtual indices, by\,\,} i,j,\ldots \text{occupied indices and by }p,q,\ldots \text{indices of any sort.}
\end{align}
For any choice of $N$ molecular orbital functions $\lbrace \psi_{q_1},\ldots\psi_{q_N} \rbrace$ contained in $\mathfrak{M}$, we can build the Slater determinant \begin{align}\label{slater_det}
    \Psi_\nu= \frac{1}{\sqrt{N!}}\sum_{\tau\in S(N)}\text{sgn}(\tau) \prod_{\ell=1}^N \psi_{q_\ell}\big(r_{\tau(\ell)}\big),
\end{align}
where $S(N)$ denotes the symmetric group of permutations of $N$ elements and $\nu=(I,A)$ is a so called \textit{excitation index}. Here $I=(i_1,\ldots,i_r)$ with $i_1<\ldots<i_r\le N$ are the indices of the occupied orbitals that are not contained in $\lbrace \psi_{q_1},\ldots,\psi_{q_N}\rbrace$   and $A=(a_1,\ldots,a_r)$ with $N<a_1<\ldots<a_r\le \text{n}_\text{b}$ are the indices in the virtual orbitals that were used to built the determinant. The number $0\le r \le N$ is the excitation rank or the excitation index of the determinant. The set of all excitation indices of rank $R$ is denoted by $\mathscr{I}^{(r)}$ and the total set of all  (true) excitation indices by $\mathscr{I}:=\cup_{0<r\le N}\mathscr{I}^{(r)}$. We exclude the special case of excitation rank $R=0$, since the resulting determinant corresponds to the Hartree–Fock determinant that is the solution to the mean-field approximation and is in the following called the \textit{reference determinant} $\Psi_0$. The set of all Slater determinants is denoted by
\begin{align}\label{Slater_determinants}
    \mathcal{S}(\mathfrak{M})=\lbrace \Psi_0\rbrace \cup \lbrace \Psi_\nu\,|\, \nu\in \mathscr{I}\rbrace.
\end{align} 
While $\mathfrak{A}$ and also $\mathfrak{M}$ were both basis for discretizing the one particle space $\mathrm{H}^1(\mathbb{R}^3)$, the $\text{span}_\mathbb{C}\, \mathcal{S}(\mathfrak{M})$ is the finite dimensional subspace that discretizes the $N$-body space $\mathrm{H}^1(\mathbb{R}^{3N})$. Note that by construction the Galerkin discretization via Slater determinants obeys the Pauli exclusion principle, i.e. only anti-symmetric functions are considered.

\subsection{Discretization on the post-Hartree–Fock Level of Theory}

The Hartree–Fock method provides only a mean-field approximation of the system and therefore loses accuracy as electron correlation increases. Hence, for most fermionic systems of practical interest, more sophisticated methods are required. Post-Hartree–Fock methods use the Hartree–Fock state as a reference and systematically incorporate correlation effects. Within the finite basis set already chosen at the Hartree–Fock level of theory (where molecular orbitals are built from atomic orbitals), these systematic corrections ultimately converge to the exact discrete solution, which is usually referred to as the \textit{Full Configuration Interaction limit}. In order to introduce these methods, we adopt the formalism of second quantization. 
\subsubsection{Second Quantization}
Let $\mathfrak{h}$ be the separable, complex Hilbert space that describes our one particle fermionic system. The second quantization framework focuses on the fermionic Fock space generated by $\mathfrak{h}$, i.e.
\begin{align}
    \mathfrak{F}(\mathfrak{h})=\bigoplus_{n=0}^{K} \mathfrak{h}^{\wedge n},
\end{align}
where $\wedge$ denotes the antisymmetric tensor, $K< \infty$ and $\mathfrak{h}^{\wedge 0}:= \mathbb{C}$ is the vacuum. For our purposes the choice of $\mathfrak{h}$ is provided by our choices on the underlying Hartree–Fock level of theory. As explained in Subsection \ref{disc_HF_Theory}, the discretization of the one-particle fermionic system, for Hartree–Fock, starts by the choice of atomic orbitals $\mathfrak{A}$. Therefore, the Hilbert space of choice will be $\mathfrak{h}:=\text{span}_{\mathbb{C}}\,\mathfrak{A}$. One of the key advantages of second quantization is the introduction of \textit{annihilation} and \textit{creation operators}. For any element $f \in \mathfrak{h}$ there exists a corresponding pair of bounded, adjoint operators, the creation operator $a^{\dagger}(f):\mathfrak{F}(\mathfrak{h})\longrightarrow \mathfrak{F}(\mathfrak{h})$ and annihilation operator $a(f):\mathfrak{F}(\mathfrak{h})\longrightarrow \mathfrak{F}(\mathfrak{h})$. Since the Hilbert space $\mathfrak{h}\subset \mathrm{H}^1(\mathbb{R}^{3})$ is in our case a finite dimensional function space the action of a creation operator can be explicitly described via an outer product. For a function $\Psi\in \mathfrak{h}^{\wedge L}$ and a function $f\in \mathfrak{h}$ with corresponding creation operator $a(f)$, the action of creation has the form of
\begin{align}
    \big(a^{\dagger}(f)\Psi\big)(r_1,\ldots,r_{L+1})=\sum_{\tau \in S(L+1)}\frac{\text{sgn}(\tau)}{\sqrt{L!(L+1)!}}f\big(r_{\tau(1)}\big)\Psi\big(r_{\tau(2)},\ldots, r_{\tau(L+1)}\big).
\end{align}
 The adjoint action of the annihilation operator $a(f)$ corresponding to $f\in\mathfrak{h}$ on $\Psi\in\mathfrak{h}^{\wedge L}$ is given by \begin{align}
    \big(a(f)\Psi\big)(r_1,\ldots r_{L-1})=\sqrt{L}\int_{\mathbb{R}^3} f(r)\Psi(r,r_1,\ldots,r_{L-1})\mathrm{d}r.
\end{align} Interpreting the summands of the Fock space in the usual sense, i.e. $\mathfrak{h}^{\wedge n}$ describes the $n$-particle system of fermions, then the creation operator creates another fermion, and the annihilation operator will remove a fermion. Since the operator $a^{\dagger}:\mathfrak{h}\longrightarrow \mathcal{B}(\mathfrak{F}(\mathfrak{h}))$ is a bounded operator, it suffices to pick a basis of $\mathfrak{h}$ to describe all meaningful creations and annihilations in the Fock space $\mathfrak{F}(\mathfrak{h})$. Here the set of molecular orbitals $\mathfrak{M}$, which is determined on the Hartree–Fock level of theory, comes into play. We use the previously introduced splitting  $\mathfrak{M}=\lbrace{\psi}_i\rbrace_{i=1}^{N}\cup \lbrace \psi_a\rbrace_{a=N+1}^{\text{n}_{\text{b}}}$ and build all the creation and annihilation operators 
\begin{align}\label{calc_annihi,crea}
    a^{\dagger}(\psi_i):=a^\dagger_i,\qquad a(\psi_i):=a_i,
\end{align}
such that we end up with the sets $\lbrace a^\dagger_i\rbrace_{i=1}^{N}\cup \lbrace a^\dagger_a\rbrace_{a=N+1}^{\text{n}_{\text{b}}}$ and $\lbrace a_i\rbrace_{i=1}^{N}\cup \lbrace a_a\rbrace_{a=N+1}^{\text{n}_{\text{b}}}$. It is now possible to write the discretization $\mathcal{H}_\mathfrak{M}$ of the full electronic Hamiltonian \eqref{Hamil} with respect to $\mathfrak{M}$ with the corresponding creation and annihilation operators as 
\begin{align}\label{sec_quan_Hamil}
    \mathcal{H}_\mathfrak{M}=\sum_{p,q=1}^{\text{n}_\text{b}} h_{pq}a^\dagger_pa_q+\frac{1}{2}\sum^{\text{n}_\text{b}}_{p,q,r,s=1}q^{rs}_{pq}a^\dagger_p a^\dagger_ra_sa_q,
\end{align}
where $h_{pq}$ and $q^{rs}_{pq}$ are the interaction integrals appearing in the energy functional \eqref{disc_en_func}, but in the orthogonal molecular orbital basis. Using the orthonormal molecular orbitals $\mathfrak{M}$ instead of the atomic orbitals $\mathfrak{A}$, that usually have a non-trivial overlap, leads to the \textit{canonical anti-commutation relations} (CAR), i.e.
\begin{align}
    [a_i^\dagger,a_j]_+=\delta_{ij},\qquad [a^\dagger_i,a^\dagger_j]_+=[a_i,a_j]_+=0,
\end{align}
where $[A,B]_+=AB+BA$ is the typical anti-commutator of two operators $A$ and $B$.

\subsubsection{Excitation and Cluster Operators} When modeling a molecular quantum system using the machinery of second quantization, particularly with creation and annihilation operators, it is important to remember that, in most cases, the total number of electrons $N$ in the system is conserved. Therefore, for any creation of a new electron there has to be a suitable annihilation of another particle and vice versa. This is achieved by introducing \textit{excitation operators}. Let us again stick to the framework were we have already determined a fixed set of molecular orbitals $\mathfrak{M}$, that is divided in occupied and virtual orbitals. Moreover, we consider the corresponding sets of creation and annihilation operators given by \eqref{calc_annihi,crea}. An excitation operator is the product of creation and annihilation operators 
\begin{align}
    \mathscr{X}^A_I=a^\dagger_{a_r} \cdot \ldots \cdot a^\dagger_{a_1} a_{i_r}\cdot \ldots \cdot a_{i_1},
\end{align}
where $I:=(i_1,\ldots,i_r)\in \mathbb{N}^r$ and $A:=(a_1,\ldots,a_r)\in \mathbb{N}^r$ are multi-indices with 
\begin{align}\label{index_ordering}
    0<i_1<\ldots<i_r\le N <a_1<\ldots<a_r\le\text{n}_\text{b}.
\end{align}
Two indices $I$ and $A$, which satisfy the ordering \eqref{index_ordering}, provide a unique excitation index $\nu:=(I,A)$. Henceforth, any excitation operator can be written as
\begin{align}\label{exc_op}
    \mathscr{X}_\nu:=\mathscr{X}^{a_1\ldots a_r}_{i_1\ldots i_r}:=\mathscr{X}^A_I:=a^\dagger_{a_r} \cdot \ldots \cdot a^\dagger_{a_1} a_{i_r}\cdot \ldots \cdot a_{i_1},
\end{align}
for some $r\le N$. We can now write any of the Slater determinants \eqref{slater_det} by means of an excitation of the reference determinant, i.e. 
\begin{align}
    \Psi_\nu=\mathscr{X}_\nu\Psi_0, \qquad\qquad(\forall \nu \in \mathscr{I}).
\end{align}
Hence, the Slater basis that is built in \eqref{Slater_determinants} can be also recovered via
\begin{align}\label{slater_basis_via_exc}
    \mathcal{S}(\mathfrak{M})=\lbrace \Psi_0\rbrace \cup \lbrace \mathscr{X}_\nu\Psi_0\,|\, \nu\in \mathscr{I}\rbrace.
\end{align}
The $\mathrm{L}^2$-adjoint of an excitation operator $X_\nu$ is called a \textit{de-excitation operator} and is denoted by $X_\nu^\dagger$.
Further useful properties of the set of excitation operators, that we will use later, are summarized in the following proposition. 
\begin{proposition}\label{proposition_excitation_comm}
    For any $\nu, \eta \in \mathscr{I}$ it holds that 
    \begin{enumerate}
        \item (commutativity) $[\mathscr{X}_\nu,\mathscr{X}_\eta]=\mathscr{X}_\nu \mathscr{X}_\eta- \mathscr{X}_\eta \mathscr{X}_\nu = 0=\mathscr{X}^\dagger_\nu \mathscr{X}^\dagger_\eta- \mathscr{X}^\dagger_\eta \mathscr{X}^\dagger_\nu=[\mathscr{X}^\dagger_\nu,\mathscr{X}^\dagger_\eta]$.
        \item (closedness) either $\mathscr{X}_\nu \mathscr{X}_\eta=0$ or there exists $\upsilon \in \mathscr{I}$ with $\mathscr{X}_\nu \mathscr{X}_\eta=\mathscr{X}_\upsilon$.
        \item (nilpotency)  $\big(\mathscr{X}^\dagger_\nu\big)^2=\mathscr{X}_\nu^2=0$.
    \end{enumerate}
\end{proposition}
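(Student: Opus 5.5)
The plan is to work directly with the canonical anti-commutation relations. I write every excitation operator as a creation block followed by an annihilation block,
\[
\mathscr{X}_\nu = A^\dagger_\nu A_\nu,\qquad A^\dagger_\nu := a^\dagger_{a_r}\cdots a^\dagger_{a_1},\qquad A_\nu := a_{i_r}\cdots a_{i_1}.
\]
The key structural fact is that every occupied index is distinct from every virtual index, since $i\le N<a$. Consequently every creator in $\mathscr{X}_\eta$ anticommutes with every annihilator in $\mathscr{X}_\nu$, and conversely; moreover any two creators anticommute, and any two annihilators anticommute.

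\textbf{Nilpotency (3).} In $\mathscr{X}_\nu^2 = A^\dagger_\nu A_\nu A^\dagger_\nu A_\nu$, I move the inner block $A^\dagger_\nu$ to the left past $A_\nu$, picking up only a sign. This produces the factor $A^\dagger_\nu A^\dagger_\nu$, which contains $(a^\dagger_{a_1})^2=0$ once the two copies of $a^\dagger_{a_1}$ are brought together by further sign changes. Hence $\mathscr{X}_\nu^2=0$, and taking adjoints gives $(\mathscr{X}^\dagger_\nu)^2=0$.

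\textbf{Commutativity (1).} Let $\nu$ have rank $r$ and $\eta$ have rank $s$. Both $\mathscr{X}_\nu$ and $\mathscr{X}_\eta$ are products of an even number of fermionic operators ($2r$ and $2s$ respectively). Every pair with one factor from $\mathscr{X}_\nu$ and one from $\mathscr{X}_\eta$ anticommutes: creator–creator and annihilator–annihilator pairs always do, and mixed creator–annihilator pairs do because their indices differ. Passing $\mathscr{X}_\eta$ through $\mathscr{X}_\nu$ therefore costs the sign $(-1)^{(2r)(2s)}=+1$, so $[\mathscr{X}_\nu,\mathscr{X}_\eta]=0$. Taking adjoints yields the statement for de-excitations.

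\textbf{Closedness (2).} If $I\cap J\neq\emptyset$ or $A\cap B\neq\emptyset$ (with $\eta=(J,B)$), the product contains a repeated annihilator or creator. Anticommuting the two equal operators next to each other gives $a_k^2=0$ or $(a_k^\dagger)^2=0$, so $\mathscr{X}_\nu\mathscr{X}_\eta=0$. The same holds if $r+s>N$, since then $I$ and $J$ cannot be disjoint subsets of $\{1,\dots,N\}$.

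Otherwise I compute exactly:
\[
\mathscr{X}_\nu\mathscr{X}_\eta = A^\dagger_\nu A_\nu A^\dagger_\eta A_\eta = (-1)^{rs}A^\dagger_\nu A^\dagger_\eta A_\nu A_\eta .
\]
The sign $(-1)^{rs}$ comes from moving the $s$ creators of $A^\dagger_\eta$ past the $r$ annihilators of $A_\nu$, which are all mixed pairs with distinct indices. Next I swap the two annihilator blocks, $A_\nu A_\eta=(-1)^{rs}A_\eta A_\nu$, which exactly cancels the first sign:
\[
\mathscr{X}_\nu\mathscr{X}_\eta = a^\dagger_{a_r}\cdots a^\dagger_{a_1}\,a^\dagger_{b_s}\cdots a^\dagger_{b_1}\;a_{j_s}\cdots a_{j_1}\,a_{i_r}\cdots a_{i_1}.
\]
This is precisely the operator $\mathscr{X}^{\,(B,A)}_{\,(I,J)}$ in the notation of \eqref{exc_op}, read with the concatenated multi-indices. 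It is therefore a single excitation operator of rank $r+s$ whose excitation index $\upsilon$ has occupied set $I\cup J$ and virtual set $A\cup B$. That gives the claimed equality $\mathscr{X}_\nu\mathscr{X}_\eta=\mathscr{X}_\upsilon$, provided $\upsilon$ is listed in this concatenated order.

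\textbf{Main obstacle.} The delicate step is reconciling this concatenated ordering with the strict ordering convention \eqref{index_ordering}. Sorting the creators into increasing order and, separately, sorting the annihilators each introduces the sign of a shuffle permutation, and these two signs need not agree. Already for $r=s=1$ with $i<j$ and $a<b$, the computation gives $\mathscr{X}^a_i\mathscr{X}^b_j=-\mathscr{X}^{ab}_{ij}$ when $\mathscr{X}^{ab}_{ij}$ is written in sorted order.

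So exact equality with a \emph{sorted} $\upsilon$ does not hold in general. I would state the result in the form proved above, with $\upsilon$ the concatenated excitation index (equivalently, ordered consistently on the occupied and virtual sides). I would also remark that with the strictly increasing convention the identity holds up to the sign $\operatorname{sgn}(\sigma_{\mathrm{occ}})\operatorname{sgn}(\sigma_{\mathrm{virt}})$ of the two sorting shuffles. That sign is harmless for the later use in the coupled cluster equations, because it can be absorbed into the amplitudes.
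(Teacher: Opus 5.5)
Your proof is correct and is the standard computation from the canonical anticommutation relations; the paper itself gives no argument for this proposition and only cites \cite{Schneider2009}. Your caveat on (2) is also right: under the strictly increasing convention \eqref{index_ordering} one gets, e.g., $\mathscr{X}^a_i\mathscr{X}^b_j=-\mathscr{X}^{ab}_{ij}$ for $i<j$, $a<b$, and no other $\upsilon\in\mathscr{I}$ can repair this because distinct $\upsilon$ give mutually orthogonal nonzero vectors $\mathscr{X}_\upsilon\Psi_0$, so the closedness item as printed holds only up to a sign. The one thing to drop is your parenthetical ``equivalently, ordered consistently'': with the same block order on both sides one gets $\mathscr{X}_\nu\mathscr{X}_\eta=(-1)^{rs}\mathscr{X}^{(A,B)}_{(I,J)}$, so the sign-free identity genuinely needs your crossed order $\mathscr{X}^{(B,A)}_{(I,J)}$.
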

The proof for Proposition \ref{proposition_excitation_comm} was originally established in \cite{Schneider2009}. Due to \eqref{slater_basis_via_exc} the $N$-body discretization space $\mathfrak{S}(\mathfrak{M}):=\text{span}_\mathbb{C}\,\mathcal{S}(\mathfrak{M})$ is isomorphic to the space
\begin{align}
    \mathfrak{C}(\mathfrak{M}):=\Big\lbrace \sum_{\nu\in \mathscr{I}}t_\nu \mathscr{X}_\nu \,\Big|\, t_\nu\in \mathbb{C}\,\text{ for all }\, \nu\in \mathscr{I}\Big\rbrace
\end{align}
of \textit{cluster operators}. This relation gives rise to the starting perspective of many post-Hartree–Fock methods. 
\subsubsection{Coupled Cluster Method}
Consider the weak Schrödinger equation in the discretized space, i.e. 
\begin{align}\label{disc_weak_schrödi}
    \langle \Phi, \mathcal{H}_\mathfrak{M}\Psi\rangle_{\mathrm{L}^2}=E_0 \langle \Phi,\Psi\rangle_{\mathrm{L}^2},\qquad\qquad(\forall \Phi \in \mathfrak{S}(\mathfrak{M})),
\end{align}
where $\Psi \in \mathfrak{S}(\mathfrak{M})$ and $E_0\in \mathbb{R}$ is the approximate ground state energy. Since $\mathfrak{S}(\mathfrak{M})\cong \mathfrak{C}(\mathfrak{M})$ we can take the Hartree–Fock determinant $\Psi_0$ and reformulate \eqref{disc_weak_schrödi} to
\begin{align}\label{full_ci}
    \langle \Phi, \mathcal{H}_\mathfrak{M}(\mathrm{I}+T)\Psi_0\rangle_{\mathrm{L}^2}=E_0 \langle \Phi,(\mathrm{I}+T)\Psi_0\rangle_{\mathrm{L}^2},\qquad\qquad(\forall \Phi \in \mathfrak{S}(\mathfrak{M})),
\end{align}
where besides $E_0$ now the cluster operator $T\in \mathfrak{C}(\mathfrak{M})$ is the unknown. Solving the ground-state problem using a linear parameterization with cluster operators as in \eqref{full_ci}, constructed from molecular orbitals obtained through a Hartree–Fock calculation, is known as the \textit{full configuration interaction} (FCI) method and is equivalent to solving the discrete Schrödinger equation. Despite the FCI method is computationally intractable for nearly all realistic systems, it provides a natural and systematic ansatz for reducing the computational cost. This reduction in cost is achieved by truncating the set of admissible excitations. For any $\ell\le N$ define the \textit{excitation index set of level} $\ell$
\begin{align}\label{trun_ex_ind_set}
    \mathscr{I}_\ell:=\bigcup_{r\le \ell} \mathscr{I}^{(r)}.
\end{align} 
The truncated set of cluster operators is denoted by $\mathfrak{C}_\ell(\mathfrak{M})\subset \mathfrak{C}(\mathfrak{M})$ and the truncated set of Slater determinants, is denoted by
\begin{align}\label{trunc_slater_basis}
    \mathcal{S}_\ell(\mathfrak{M}):=\lbrace \Psi_0\rbrace \cup \lbrace \mathscr{X}_\nu \Psi_0\,|\, \nu \in \mathscr{I}_\ell\rbrace.
\end{align}
Only considering cluster operators in $\mathfrak{C}_\ell(\mathfrak{M})$ reduces the degrees of freedom and expected accuracy of the respective truncated approximate ground state energy $E_{0,\ell}$ at the same time. A big downside of this reduction scheme is the loss of \textit{size consistency}, which is methodologically an important feature for quantum chemistry \cite{helgakar}. This downside can be circumvented by moving from a linear parameterization to a certain non-linear one. 
\begin{theorem}\label{CC_works}
    Let $\Psi_0$ be the reference determinant of a fixed set of molecular orbitals $\mathfrak{M}$. For any $\Psi \in \mathfrak{S}(\mathfrak{M})$ that is intermediate normalized, i.e. $\langle \Psi_0,\Psi\rangle_{\mathrm{L}^2}=1$, there exists a unique cluster operator $T\in \mathfrak{C}(\mathfrak{M})$ such that $\Psi=\mathrm{e}^T\Psi_0$.
\end{theorem}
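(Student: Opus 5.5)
The plan is to reduce the statement to linear algebra on the commutative, nilpotent algebra generated by the excitation operators, using Proposition \ref{proposition_excitation_comm}. Set $\mathcal{A}:=\mathrm{span}_\mathbb{C}\big(\{\mathrm{I}\}\cup\{\mathscr{X}_\nu\,|\,\nu\in\mathscr{I}\}\big)$. By commutativity and closedness, $\mathcal{A}$ is a finite-dimensional commutative algebra, and $\mathfrak{C}(\mathfrak{M})$ is an ideal in it. Since every product of $N+1$ excitation operators raises the excitation rank beyond $N$, it must vanish. Hence every $T\in\mathfrak{C}(\mathfrak{M})$ satisfies $T^{N+1}=0$. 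This nilpotency (which refines item (3) of Proposition \ref{proposition_excitation_comm}) I would verify directly from the rank-additivity of products: $\mathscr{X}_\nu\mathscr{X}_\eta$ is either $0$ or $\mathscr{X}_\upsilon$ with $\mathrm{rank}(\upsilon)=\mathrm{rank}(\nu)+\mathrm{rank}(\eta)$.

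Consequently, for $T\in\mathfrak{C}(\mathfrak{M})$ the exponential $\mathrm{e}^T=\sum_{k=0}^{N}T^k/k!$ is a finite sum and has the form $\mathrm{I}+T'$ with $T'\in\mathfrak{C}(\mathfrak{M})$. Likewise, for $C\in\mathfrak{C}(\mathfrak{M})$ the logarithm
\[
\log(\mathrm{I}+C):=\sum_{k=1}^{N}\frac{(-1)^{k+1}}{k}C^k
\]
is a finite sum lying in $\mathfrak{C}(\mathfrak{M})$. Because these are polynomial identities in a single nilpotent element of a commutative algebra, the formal power series identities $\exp(\log(1+x))=1+x$ and $\log(\exp(x))=x$ in $\mathbb{Q}[[x]]$ apply after truncation. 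Thus $\exp:\mathfrak{C}(\mathfrak{M})\to\mathrm{I}+\mathfrak{C}(\mathfrak{M})$ is a bijection with inverse $\log$.

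Next I would use the isomorphism $\mathfrak{C}(\mathfrak{M})\cong\mathfrak{S}(\mathfrak{M})$ coming from \eqref{slater_basis_via_exc}. The determinants $\Psi_0$ and $\mathscr{X}_\nu\Psi_0$ form an orthonormal basis, so an intermediate normalized $\Psi$ can be written uniquely as $\Psi=\Psi_0+\sum_{\nu}c_\nu\mathscr{X}_\nu\Psi_0=(\mathrm{I}+C)\Psi_0$ with a unique $C=\sum_\nu c_\nu\mathscr{X}_\nu\in\mathfrak{C}(\mathfrak{M})$. The coefficient of $\Psi_0$ equals $1$ precisely because $\langle\Psi_0,\Psi\rangle_{\mathrm{L}^2}=1$, and each $\mathscr{X}_\nu\Psi_0$ is orthogonal to $\Psi_0$. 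Setting $T:=\log(\mathrm{I}+C)$ gives existence: $\mathrm{e}^T\Psi_0=(\mathrm{I}+C)\Psi_0=\Psi$.

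For uniqueness, suppose $\mathrm{e}^{T_1}\Psi_0=\mathrm{e}^{T_2}\Psi_0$. Write $\mathrm{e}^{T_k}=\mathrm{I}+C_k$ with $C_k\in\mathfrak{C}(\mathfrak{M})$. Then $C_1\Psi_0=C_2\Psi_0$, and the injectivity of $C\mapsto C\Psi_0$ on $\mathfrak{C}(\mathfrak{M})$ (the isomorphism above) gives $C_1=C_2$. Applying $\log$ then gives $T_1=T_2$.

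The main obstacle is the careful justification that the power-series identities transfer to this operator algebra. This requires commutativity, so that only the single variable $C$ or $T$ appears, together with nilpotency, so that all series are finite and no convergence issues arise. It also requires that the $\mathscr{X}_\nu$ be linearly independent as operators, which follows from the linear independence of the $\mathscr{X}_\nu\Psi_0$.
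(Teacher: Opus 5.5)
Your proof is correct and is essentially the standard argument from Schneider's 2009 analysis, which the paper cites instead of proving the theorem itself. In that argument, nilpotency of the algebra generated by the excitation operators makes $\exp:\mathfrak{C}(\mathfrak{M})\to\mathrm{I}+\mathfrak{C}(\mathfrak{M})$ a bijection whose inverse is the finite logarithm series, and injectivity of $C\mapsto C\Psi_0$ gives existence and uniqueness. This injectivity is the precise content of the paper's loosely stated isomorphism $\mathfrak{C}(\mathfrak{M})\cong\mathfrak{S}(\mathfrak{M})$; it really identifies $\mathfrak{C}(\mathfrak{M})$ with $\Psi_0^\perp\cap\mathfrak{S}(\mathfrak{M})$, which is what you actually use.

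One small correction: the exp/log identities need only closedness and nilpotency, not commutativity of the whole algebra, because the subalgebra generated by a single element is automatically commutative.
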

A rigorous proof for the discretized setting can be found in \cite{Schneider2009} and a similar theorem for the continuous setting in \cite{rohwedder1}. There are two main advantages to move from the linear parameterization \eqref{full_ci} to the weak formulation of the equivalent form
\begin{align}\label{exp_ansatz}
    \langle \Phi, \mathcal{H}_\mathfrak{M}\mathrm{e}^T\Psi_0\rangle_{\mathrm{L}^2}=E_0 \langle \Phi,\mathrm{e}^T\Psi_0\rangle_{\mathrm{L}^2},\qquad\qquad(\forall \Phi \in \mathfrak{S}(\mathfrak{M})),
\end{align}
where one solves for $T\in \mathfrak{C}(\mathfrak{M})$ and $E_0\in \mathbb{R}$. Both advantages are related to the process of excitation index truncation for cheaper calculations. First and foremost, the exponential ansatz \eqref{exp_ansatz} remains size consistent even if we only consider cluster operators $T\in \mathfrak{C}_\ell(\mathfrak{M})$ for $\ell<N$ \cite{helgakar, Schneider2009}. The second advantage is that, due to its nonlinearity, the ansatz implicitly accounts for excitations higher than rank $\ell$, resulting in much greater accuracy compared to the truncated linear ansatz. The solving of the Schrödinger equation with the exponential parameterization \eqref{exp_ansatz} is referred to as the \textit{full coupled cluster} (FCC) method. In practice, two modifications are made in the FCC method, that are motivated from the algorithmic point of view. On the one hand it suffices to test the solution only against the Slater basis $\mathcal{S}(\mathfrak{M})$, on the other hand it was rigorously proved in \cite{Schneider2009} that the problem can be equivalently solved after applying $\mathrm{e}^{-T}$ to the left of the Hamiltonian. This gives rise to the \textit{linked coupled cluster equations}
\begin{align}\label{linked_FFC}
    \langle \Psi_\nu, \mathrm{e}^{-T}\mathcal{H}_\mathfrak{M}\mathrm{e}^T\Psi_0\rangle_{\mathrm{L}^2}=E_0\langle\Psi_\nu,\Psi_0 \rangle_{\mathrm{L}^2},\qquad\qquad(\forall \nu\in \mathscr{I}\cup\lbrace0\rbrace).
\end{align}
Multiplying $\mathcal{H}_\mathfrak{M}$ with $\mathrm{e}^{-T}$ from the left results in a finitely terminating Baker–Campbell–Hausdorff expansion \cite{crawford}, i.e.
\begin{align*}
    \mathrm{e}^{-T}\mathcal{H}_\mathfrak{M}\mathrm{e}^T=\mathcal{H}_\mathfrak{M}+[\mathcal{H}_\mathfrak{M},T]+\frac{1}{2}[[\mathcal{H}_\mathfrak{M},T],T]+\frac{1}{6}[[[\mathcal{H}_\mathfrak{M},T],T],T]+\frac{1}{24}[[[[\mathcal{H}_\mathfrak{M},T],T],T],T].
\end{align*}
Hence \eqref{linked_FFC} is a system of quartic polynomial equations in the cluster operator $T$. The disadvantage of this modification is the loss of symmetry in the operator. Therefore, the coupled cluster method is a non-variational method. Note that the orthonormality of the molecular orbitals $\mathfrak{M}$ implies orthonormality of the Slater basis $\mathcal{S}(\mathfrak{M})$, so the linked coupled cluster equations actually read
\begin{align}
    \langle \Psi_\nu, \mathrm{e}^{-T}\mathcal{H}_\mathfrak{M}\mathrm{e}^T\Psi_0\rangle_{\mathrm{L}^2}=E_0 \delta_{ 0,\nu},\qquad\qquad(\forall \nu\in \mathscr{I}\cup\lbrace0\rbrace),
\end{align}
where $\delta_{i,j}$ is the Kronecker delta. Since FCC, just like FCI, is computationally intractable, we systematically can truncate the excitation index set $\mathscr{I}$ in order to trade complexity for accuracy. As already mentioned, the truncation process should not result in a loss of the size consistency. This requirement is met by truncating according to the excitation rank, i.e. considering an excitation index set \eqref{trun_ex_ind_set} of lower level than $N$. Truncating in arbitrary manner does in general not guarantee that the coupled cluster method stays size consistent. The \textit{truncated, linked coupled cluster equations} are
\begin{align}\label{trun_cc_eq}
    \langle \Psi_\nu, \mathrm{e}^{-T}\mathcal{H}_\mathfrak{M}\mathrm{e}^T\Psi_0\rangle_{\mathrm{L}^2}=0,\qquad\qquad(\forall \nu\in \mathscr{I}_\ell),
\end{align}
where $T\in \mathfrak{C}_\ell(\mathfrak{M})$. After a suitable $T\in \mathfrak{C}_\ell(\mathfrak{M})$ is found, which solves the truncated coupled cluster equations, we define the \textit{coupled cluster energy}
\begin{align}\label{cc_energy}
     E_{\ell}:=E_{CC}:=\langle \Psi_0, \mathrm{e}^{-T}\mathcal{H}_\mathfrak{M}\mathrm{e}^T\Psi_0\rangle_{\mathrm{L}^2}.
\end{align}
The excitation index set truncation destroyed the equivalence to the regular weak eigenvalue problem \eqref{disc_weak_schrödi}. Therefore, the energy $E_{CC}$ is rather defined as the quantity that approaches the true eigenvalue $E_0$ as $\ell \rightarrow N$. Note that the coupled cluster energy depends on the solution operator $T\in\mathfrak{C}_\ell(\mathfrak{M})$. It is well known that there can be many solutions to \eqref{trun_cc_eq} and hence possibly many different energies $E_{CC}$, see e.g. \cite{faulstich2024algebraicvarietiesquantumchemistry}. The equations \eqref{trun_cc_eq} can be formulated via the \textit{coupled cluster function} defined by
\begin{align}\label{cc_function}
    \mathcal{Q}: \mathbb{C}^{|\mathscr{I}_\ell|}\longrightarrow \mathbb{C}^{|\mathscr{I}_\ell|},\quad t\mapsto \mathcal{Q}(t),
\end{align}
where $\mathcal{Q}_\nu(t)=\langle \Psi_\nu, \mathrm{e}^{-T(t)}\mathcal{H}_\mathfrak{M}\mathrm{e}^{T(t)}\Psi_0\rangle_{\mathrm{L}^2}$, and the postfix notation $T(t)\in \mathfrak{C}_\ell (\mathfrak{M})$ emphasizes that the cluster operator is built with the coefficients provided by $t\in \mathbb{C}^{|\mathscr{I}_\ell|}$. Henceforth, we will stick to the cluster function $\mathcal{Q}$, where the coupled cluster equations \eqref{trun_cc_eq} reduce to an ordinary root finding problem
\begin{align}\label{root_eq_cc_fun}
    \mathcal{Q}(t)=0,
\end{align}
of a quartic polynomial function in $t$. Solutions to the root finding problem are called the \textit{coupled cluster amplitudes}.  In modern software \eqref{root_eq_cc_fun} is solved iteratively by a quasi-Newton method, that typically uses the MP2 amplitudes as a starting guess.

\section{The Parameter Dependent Problem}\label{sec:parameter_cc}In this work, we are interested on rigorous results concerning the expected regularity of the coupled cluster amplitudes, that is, the solutions of \eqref{root_eq_cc_fun}, with respect to nuclear displacements. Since the coupled cluster method is a post-Hartree–Fock method, this is strongly linked to the reference quantities provided by the underlying Hartree–Fock level of theory. However, the reference framework can be decoupled from the Hartree–Fock method, allowing one to simply assume that both the reference determinant and the excited determinants possess high regularity with respect to nuclear displacements. We chose not to pursue this direction for several reasons. The convergence rate of Newton-type methods for solving the root-finding problem \eqref{root_eq_cc_fun} depends strongly on the chosen reference framework (see, e.g., \cite{Schneider2009, helgakar}). In particular, the reference determinant $\Psi_0$ should have as large an overlap as possible with the true solution. Therefore, in practice, one can not simply choose the reference framework arbitrary and in modern quantum chemistry software the most reliable choice is still the Hartree–Fock reference. Hence, as a necessary step we will also prove that, under some non-degeneracy assumption on the Riemannian Hessian, the Hartree–Fock framework has locally high regularity in the nuclear coordinates.\\
\newline
Henceforth, we denote the nuclear coordinate domain by $\Omega\subset\mathbb{R}^{3M}$, where $M>0$ is the number of atoms. Due to the physical behavior of the atoms, we can assume that the position of any two nuclei never collide. For simplicity one can think of the configuration space as
\begin{align}\label{multidim_coord_sp}
    \Omega=\Omega_1\times \ldots \times \Omega_M,
\end{align}
where $\Omega_j\subset \mathbb{R}^3$ is an open subset for all $j\le M$ and $\Omega_i\cap\Omega_j=\emptyset$ for $i\ne j$. Since we will later mostly restrict our setting to an analytic curve evolving in the nuclear coordinate space, the restrictions can be formulated differently in that context, effectively allowing intersections of the regions.
\begin{definition}\label{trajectory_nuc_spac}
    Let $I\subset \mathbb{R}$ be a bounded interval. A \textit{trajectory} through the nuclear coordinate space, is a map 
    \begin{align}
        \Gamma:I\longrightarrow \mathbb{R}^{3M},\quad \mu\mapsto \Gamma(\mu)=(\Gamma_1(\mu),\ldots,\Gamma_M(\mu)),
    \end{align}
    where $\Gamma_j(\mu) \in \mathbb{R}^3$ for all $\mu\in I$ and $j\le M$, such that $\Gamma_i(\mu)\ne \Gamma_j(\mu)$ for all $\mu \in I$ and $i\ne j$.
\end{definition}
Note that the parameterization of the change of the nuclear geometry of a molecule by a trajectory $\Gamma$ arises naturally, for example, in molecular dynamics, where the free parameter $\mu$ represents time. Definition \ref{trajectory_nuc_spac} only demands that the nuclei stay separate at any point in time, but does not restrict the image of any $\Gamma_j$ under $I$ artificially.

\subsection{Parameter Dependence of the Hartree–Fock Method} As described in Section \ref{disc_HF_Theory} any Hartree–Fock calculation starts with the choice of atomic orbitals $\mathfrak{A}$. This set typically depends on the nuclear positions, as in the case of Slater-type orbitals (STOs) or Gaussian-type orbitals (GTOs), where the orbitals are centered at the nuclei and decay exponentially with increasing distance from the centers. Therefore, we start the parameter dependent setting with the set of atomic orbitals
\begin{align}
    \mathfrak{A}(\omega)=\big\lbrace \chi_1(\omega),\ldots,\chi_{\text{n}_\text{b}}(\omega)\big\rbrace\subset \mathrm{H}^1(\mathbb{R}^3),\qquad(\forall\omega\in \Omega),
\end{align}
which are supposed to be linear independent for any nuclear configuration $\omega\in \Omega$. This gives an invertible overlap matrix function $\mathbf{S}:\Omega \longrightarrow \mathbb{R}^{\text{n}_\text{b}\times \text{n}_\text{b}}$, with $\mathbf{S}(\omega)$ being the overlap matrix described in \eqref{overlap} for any $\omega \in \Omega$. Moreover, we define the parameter dependent Hartree–Fock energy functional in the density perspective as

\begin{align}\label{param_ene_func}
    \mathcal{E}:\Omega\times \mathcal{G}\longrightarrow \mathbb{R}, \quad (\omega, \mathscr{D})\mapsto \mathcal{E}(\omega,\mathscr{D}):=\mathcal{E}_\omega(\mathscr{D}):= \text{Tr}((\mathbf{h}(\omega)+\tfrac{1}{2}\mathbf{G}(\omega,\mathscr{D}))\mathscr{D}).
\end{align}
Here $\mathbf{h}(\omega)$ and $\mathbf{G}(\omega,\mathscr{D})$ are, similar to the parameter dependent overlap matrix, the natural extension of the matrices defined in Section \ref{disc_HF_Theory} to the parameter dependent setting. We would like to stress that the parameter domain $\Omega$ describing the nuclear degrees of freedom can be decoupled from the Grassmannian $\mathcal{G}$ only because we have transformed the functional to the orthogonal orbital basis. Otherwise, the Grassmannian $\mathcal{G}(\omega)$ would itself be a manifold that depends on $\omega$ through variations in the overlap matrix function $\mathbf{S}(\omega)$. The corresponding parameter dependent version of the Riemannian gradient \eqref{riem_grad} is given by
\begin{align}\label{param_grad}
    \nabla_\mathcal{G}\mathcal{E}:\Omega\times \mathcal{G}\longrightarrow T\mathcal{G},\quad (\omega,\mathscr{D})\mapsto \nabla_\mathcal{G}\mathcal{E}(\omega,\mathscr{D}):=\nabla_\mathcal{G}\mathcal{E}_\omega (\mathscr{D}):=[\mathscr{D},[\mathscr{D},\mathbf{F}(\omega,\mathscr{D})]].
\end{align}
Our main objective in this section is, to investigate under what conditions we can expect to find parametrized densities  $\mathscr{D}:\Omega_0\subset \Omega\longrightarrow \mathcal{G}$, that are real analytic and satisfy
\begin{align*}
    \nabla_\mathcal{G}\mathcal{E}_\omega(\mathscr{D}(\omega))=0\in T_{\mathscr{D}(\omega)}\mathcal{G}, \qquad (\forall \omega\in \Omega_0).
\end{align*}
Recall that $\mathcal{G}$ admits a real analytic structure and therefore $\mathscr{D}(\omega)$ being analytic means, that for any chart $(\varphi,U)$ of that structure, the local map $\varphi \circ \mathscr{D}:\Omega_0 \longrightarrow \varphi(U) \subset \mathbb{R}^{\text{dim}\mathcal{G}}$ is real analytic in the usual sense.
The first condition we impose is natural and quite obvious.
\begin{framed}
\textbf{Analyticity Condition:} The Hartree–Fock energy functional $\mathcal{E}$ varies analytical in $\omega$.
\end{framed}In our particular case, where $\omega \in \Omega \subset \mathbb{R}^{3M}$ describes the position of the atoms, the parameter varies the electronic Hamiltonian $\mathcal{H}(\omega)$ by shifting the nuclei-electron interaction potential \eqref{nuclei_elec_pot} as $V(x,\omega)$. Moreover, in computational chemistry the vast majority of atomic orbitals $\mathfrak{A}(\omega)$, that are chosen for the discretization, are Gaussian-type orbitals (GTO) centered at the nuclei. To be more precise, often the atomic orbitals are either \textit{primitive Cartesian Gaussian-type} or some linear combination of those. A Cartesian Gaussian-type orbital, centered at nucleus $A\in \mathbb{R}^{3}$, with exponent $a>0$, and the Cartesian quantum numbers $i,j,k\in \mathbb{N}$ has the form
\begin{align}\label{GTO}
    \chi_{ijk}(x,A)=(x_1-A_1)^i(x_2-A_2)^j(x_3-A_3)^k\exp\big(-a\|x-A\|^2\big).
\end{align}
In contrast to the originally physical motivated Slater-type orbitals, the GTO's, allow for closed analytical expression for all the components of $\tilde{\mathbf{h}}, \tilde{\mathbf{G}}$ and $\mathbf{S}$ in the nuclear coordinates \cite{Boys}. All the necessary matrix components are expressed by means of the \textit{Boys function}
\begin{align}
   F:[0,\infty)\longrightarrow \mathbb{R}, \quad z\mapsto F(z):= \int^1_0 \mathrm{e}^{-zu^2}\mathrm{d}u,
\end{align} and its derivatives \cite{MCMURCHIE1978218,Obarasaika,Beylkin_2021}. For example, let us consider two \textit{spherical Gaussian orbitals} $\chi(x,A)$ and $\chi(x,B)$  centered at nucleus $A$ and $B$ respectively. Spherical Gaussian orbitals are just primitive Gaussian's with quantum numbers $i=j=k=0$ and hence $\chi(x,A)=\exp(-a\|x-A\|^2)$ and $\chi(x,B)=\exp(-b\|x-B\|^2)$ for some suitable fixed constants $a,b>0$. For the sake of simplicity, we will neglect any normalization constants. The product of the two spherical Gaussian's can then be written as a new spherical Gaussian 
\begin{align}
    \chi(x,A)\chi(x,B)= \exp(-a\|x-A\|^2)\exp(-b\|x-B\|^2)=K_{ab}\exp(-p\|x-P\|^2),
\end{align}
where $p:=a+b,$ and
\begin{align}
    P:=\frac{aA+bB}{p},\quad K_{ab}:=\exp\Big(\frac{-ab\|A-B\|^2}{p}\Big).
\end{align}
This observation is commonly known in the quantum chemistry literature as the \textit{Gaussian product rule}. Any Coulomb-interaction integral with respect to a third nucleus $C\in \mathbb{R}^{3M}$ can then be written as
\begin{align*}
    \int_{\mathbb{R}^3}\frac{\chi(x,A)\chi(x,B)}{\|x-C\|}\mathrm{d}x=K_{ab}\int_{\mathbb{R}^3}\frac{\exp(-p\|x-P\|^2)}{\|x-C\|}\mathrm{d}x= \frac{2\pi K_{ab}}{p}F\big(p\|P-C\|^2\big),
\end{align*}
which is an analytic function in the nuclear coordinates. All other components of $\tilde{\mathbf{h}}, \tilde{\mathbf{G}}$ and $\mathbf{S}$ are obtained in a similar manner and are therefore analytic in nuclear displacement. In order to ensure that the energy functional \eqref{param_ene_func} is analytic we also require that $\mathbf{S}^{1/2}$ and $\mathbf{S}^{-1/2}$ are analytic. Since building the square root of a matrix function relies on the analyticity of the eigenvalues, the necessary assumptions link directly to Kato's perturbation theory. In his classical works, it was pointed out that, under variations in several parameters, one cannot expect the eigenvalues to exhibit the same regularity as the matrix itself. An example of this issue is already provided in \cite{Kato:1966:PTL} for the two-dimensional case. The eigenvalues of the matrix function 
\begin{align*}
    B(x,y)=\begin{pmatrix}
x & y \\
y & -x 
\end{pmatrix}
\end{align*}
are given by $\lambda_{1,2}(x,y)=\pm \sqrt{x^2+y^2}$, which are not totally differentiable at the origin. Of course, this inconvenience only occurs at points where the eigenvalues cross. As for the overlap matrix $\mathbf{S}(\omega)$, degeneracies cannot be ruled out. For any rigorous result, one must either assume that no eigenvalue crossings occur or restrict the analysis to the one-dimensional setting. We adopt the latter approach, i.e. restrict ourselves to the case where the displacement of the nuclei is parametrized by a real analytic trajectory (see Definition \ref{trajectory_nuc_spac}). Therefore, we can make use of classical results on eigenvalue perturbation of Kato and Rellich \cite{Kato:1966:PTL,rellich1969perturbation}, specialized to our setting in \cite{Dieci_Luca}, as
\begin{theorem}\label{sqrt_ana}
    Let $B:\mathbb{R}\longrightarrow \mathbb{R}^{n\times n}$, symmetric and positive definite for all $\mu \in \mathbb{R}$. Then, the unique positive square root $B^{1/2}$ is analytic in $\mu$ as well.
\end{theorem}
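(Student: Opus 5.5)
Note first that the statement must be read with $B$ itself real analytic in $\mu$; this is implicit in the setting, since the overlap matrix $\mathbf{S}(\Gamma(\mu))$ is analytic along an analytic trajectory. With that in place, the plan is to pass through the Rellich--Kato theorem for analytic symmetric matrix families in one real variable. That theorem gives analytic functions $\lambda_1(\mu),\ldots,\lambda_n(\mu)$ and an analytic family of orthogonal matrices $U(\mu)$ such that
\begin{align*}
B(\mu)=U(\mu)\,\mathrm{diag}\big(\lambda_1(\mu),\ldots,\lambda_n(\mu)\big)\,U(\mu)^\top ,
\end{align*}
and this holds even through eigenvalue crossings. This is exactly where the one-dimensional restriction is used, as the example $B(x,y)$ above shows. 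On a bounded interval this decomposition is global; on all of $\mathbb{R}$ it follows from Rellich's global version, or alternatively from working locally as described below.

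The next step is to define the candidate
\begin{align*}
R(\mu):=U(\mu)\,\mathrm{diag}\big(\lambda_1(\mu)^{1/2},\ldots,\lambda_n(\mu)^{1/2}\big)\,U(\mu)^\top .
\end{align*}
Positive definiteness gives $\lambda_k(\mu)>0$ for every $\mu$. Since $z\mapsto z^{1/2}$ is analytic on $(0,\infty)$, each composition $\lambda_k^{1/2}$ is analytic, and therefore $R$ is analytic as a product of analytic matrix functions. By construction $R(\mu)$ is symmetric positive definite with $R(\mu)^2=B(\mu)$. Uniqueness of the positive square root then forces $R(\mu)=B(\mu)^{1/2}$ for every $\mu$. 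In particular $R$ does not depend on the choice of $U$, which matters because the eigenvector frame is not unique at crossings.

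A more robust alternative avoids the diagonalization entirely. Locally around any $\mu_0$, pick a contour $\gamma$ in the right half-plane enclosing $\sigma(B(\mu))$ for all $\mu$ near $\mu_0$; this is possible by continuity of the spectrum and positivity. Then write
\begin{align*}
B(\mu)^{1/2}=\frac{1}{2\pi \mathrm{i}}\oint_\gamma z^{1/2}\big(z-B(\mu)\big)^{-1}\,\mathrm{d}z .
\end{align*}
The resolvent is analytic in $\mu$ uniformly for $z\in\gamma$, so the integral is analytic in $\mu$. This argument even works for several parameters, and it gives analyticity of $B^{-1/2}$ in the same way using the integrand $z^{-1/2}$.

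The main obstacle in the first route is justifying the analytic diagonalization through crossings, which rests on the Rellich result. I would simply cite it, in the form of \cite{Kato:1966:PTL,rellich1969perturbation,Dieci_Luca}. The holomorphic functional calculus route sidesteps that difficulty. There, the only care needed is the uniform choice of contour near each $\mu_0$, and analyticity is a local property, so a local contour suffices.
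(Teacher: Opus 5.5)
Your proposal is correct, and your first route is the one the paper relies on. The paper gives no proof of its own: it invokes Kato--Rellich theory for an analytic orthogonal diagonalization $B(\mu)=U(\mu)\Lambda(\mu)U(\mu)^\top$ through crossings, after which one takes square roots of the positive analytic eigenvalues. Ending with uniqueness of the positive root, so that $R$ does not depend on the non-unique frame, is the right finish, and reading the hypothesis as $B$ real analytic is the intended interpretation.

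Your second route is genuinely different and more general. The Riesz--Dunford integral never needs analytic eigenvalue branches. It only needs the spectrum to stay locally uniformly away from the branch cut of $z^{\pm 1/2}$, plus a Neumann-series bound for the resolvent along $\gamma$. An equivalent argument: $A\mapsto A^{1/2}$ is real analytic on the open cone of positive definite matrices by the analytic inverse function theorem. This is because the derivative $H\mapsto XH+HX$ of $X\mapsto X^2$ at a positive definite $X$ is invertible, since $\sigma(X)\cap\sigma(-X)=\emptyset$.

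This route therefore works verbatim for $\omega\in\Omega\subset\mathbb{R}^{3M}$. That answers the question the paper leaves open after Corollary~\ref{HF_coroll}, namely whether $\mathbf{S}^{\pm1/2}$ remain analytic under multi-parameter variation. The pathology in the example $B(x,y)$ concerns individual eigenvalues, not matrix functions $f(B)$ with $f$ holomorphic near the spectrum.

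What the Rellich route gives in exchange is an explicit analytic eigenframe $U(\mu)$. That frame is unnecessary for the square root. It is, however, exactly what Theorem~\ref{analytic_C} needs for the coefficient matrix $C(\mu)$, and there the restriction to one parameter is genuinely essential.
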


 As a direct consequence of Theorem \ref{sqrt_ana} for a fixed analytic trajectory $\Gamma:I\longrightarrow\mathbb{R}^{3M}$ the composite overlap matrix function $\mathbf{S}^{1/2}(\mu):=\mathbf{S}^{1/2}(\Gamma(\mu))$ can be considered real analytic for the orbitals used in practice by the computational chemist community. Moreover, the fact that the inverse can be written as
\begin{align*}
    \mathbf{S}^{-1/2}(\mu)=\frac{1}{\text{det}\big(\mathbf{S}^{1/2}(\mu)\big)}\text{adj}\big(\mathbf{S}^{1/2}(\mu)\big)
\end{align*}
shows that $\mathbf{S}^{-1/2}(\mu)$ varies with the same regularity as the square root. All these considerations justify that the analyticity condition on the discretized energy functional $\mathcal{E}$ are usually met when the displacement is along an analytic curve. In order to establish the local regularity of solutions to the discretized Hartree–Fock equations, besides the analyticity condition imposed on the functional $\mathcal{E}$ we will need a non-degeneracy condition on the Riemannian Hessian of the functional, in the spirit of the typical implicit function theorem setting.  We now establish a version of the implicit function theorem tailored to Riemannian gradients of functionals defined on an analytic manifold. This is the exact situation of our Hartree–Fock  gradient \eqref{param_grad}.

\begin{theorem}\label{IFT}
    Let $(\mathcal{M},g)$ be a real analytic Riemannian manifold of dimension $n$ and $\mathcal{E}_{\omega}$ an analytic functional on $\mathcal{M}$, that analytically depends on $\omega\in \Omega\subset \mathbb{R}^d$. Suppose that $p_0\in \mathcal{M}$ is a critical point of $\mathcal{E}_{\omega_0}$, i.e. the Riemannian gradient $\nabla_\mathcal{M}   \mathcal{E}_{\omega_0}(p_0)=0\in T_{p_0}\mathcal{M}$. Moreover, suppose $\,\mathrm{Hess}_{p_0}\mathcal{E}_{\omega_0}$ is non-degenerate, then there exists a unique real analytic parametrization $\gamma :\Omega_0\subset \Omega\longrightarrow \mathcal{M}_0\subset \mathcal{M}$ with $(\omega_0,p_0)\in \Omega_0\times \mathcal{M}_0$ and
    \begin{align*}
        \nabla_\mathcal{M} \mathcal{E}_{\omega} (\gamma(\omega))=0\in T_{\gamma(\omega)}\mathcal{M}, \qquad (\forall {\omega}\in \Omega_0).
    \end{align*}
\end{theorem}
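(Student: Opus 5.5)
We reduce Theorem \ref{IFT} to the classical real analytic implicit function theorem on $\mathbb{R}^d\times\mathbb{R}^n$ by working in a single analytic chart around $p_0$. Fix an analytic chart $(\varphi,U)$ with $p_0\in U$, $x_0:=\varphi(p_0)$, and write $f(\omega,x):=\mathcal{E}_\omega(\varphi^{-1}(x))$ for $(\omega,x)$ near $(\omega_0,x_0)$. By the analyticity assumptions on $\mathcal{E}$ and on the atlas, $f$ is jointly real analytic. In these coordinates the Riemannian gradient reads $\nabla_\mathcal{M}\mathcal{E}_\omega=G(x)^{-1}\partial_x f(\omega,x)$, where $G(x)=(g_{ij}(x))$ is the analytic, positive definite metric matrix. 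Hence $\nabla_\mathcal{M}\mathcal{E}_\omega(\varphi^{-1}(x))=0$ holds if and only if $\Phi(\omega,x):=\partial_x f(\omega,x)=0$. This removes the metric from the root-finding problem entirely: the critical points do not depend on $g$, only the notion of gradient does.

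Next we relate the non-degeneracy of the Riemannian Hessian to the Jacobian $\partial_x\Phi(\omega_0,x_0)=\partial_x^2 f(\omega_0,x_0)$. In coordinates, $\mathrm{Hess}\,\mathcal{E}(X,Y)=\partial_{ij}f\,X^iY^j-\Gamma^k_{ij}\partial_k f\,X^iY^j$. At a critical point $\partial_k f(\omega_0,x_0)=0$, so the Christoffel term vanishes. The Hessian bilinear form at $p_0$ is therefore represented exactly by the coordinate Hessian matrix. Non-degeneracy of $\mathrm{Hess}_{p_0}\mathcal{E}_{\omega_0}$, meaning the associated self-adjoint operator $G^{-1}\partial^2_x f$ is invertible, is thus equivalent to $\det \partial_x^2 f(\omega_0,x_0)\neq 0$. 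This is the one step where care is needed: the identification must be made only at the critical point. Away from it the Riemannian and coordinate Hessians differ, but that does not matter, since the implicit function theorem only needs invertibility at the base point.

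With $\Phi:\Omega\times\varphi(U)\to\mathbb{R}^n$ analytic, $\Phi(\omega_0,x_0)=0$ and $\partial_x\Phi(\omega_0,x_0)$ invertible, the real analytic implicit function theorem (e.g.\ via Cauchy majorants, or via the holomorphic IFT applied to the complexification of $\Phi$ restricted to real points) gives neighborhoods $\Omega_0\ni\omega_0$ and $V_0\ni x_0$ and a unique analytic $\xi:\Omega_0\to V_0$ with $\Phi(\omega,\xi(\omega))=0$. The zero set of $\Phi$ in $\Omega_0\times V_0$ is exactly the graph of $\xi$. We then set $\mathcal{M}_0:=\varphi^{-1}(V_0)$ and $\gamma:=\varphi^{-1}\circ\xi$.

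Finally, $\gamma$ is analytic in the manifold sense because its representation in any other analytic chart is obtained by composing with an analytic transition map. Uniqueness in $\Omega_0\times\mathcal{M}_0$ follows from the equivalence of the two zero conditions shown in the first step together with uniqueness in the Euclidean IFT. Hence $\gamma$ is the unique real analytic parametrization with $\nabla_\mathcal{M}\mathcal{E}_\omega(\gamma(\omega))=0$ for all $\omega\in\Omega_0$.
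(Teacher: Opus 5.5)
Your proof is correct and follows the same route as the paper: in a chart around $p_0$ you apply the real analytic implicit function theorem to $x\mapsto \partial_x f(\omega,x)$, whose zeros are exactly the critical points, and then pull the solution back via $\varphi^{-1}$. The only difference is how you identify $\partial_x^2 f(\omega_0,x_0)$ with $\mathrm{Hess}_{p_0}\mathcal{E}_{\omega_0}$. The paper passes to normal coordinates so that the Christoffel symbols vanish at $p_0$, whereas you note that the Christoffel term is multiplied by $\partial_k f(\omega_0,x_0)=0$ and so drops out in any analytic chart; this is slightly cleaner, since it avoids having to know that normal coordinates of an analytic metric belong to the analytic atlas.
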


\begin{proof}
    Note that for any parameter $\omega\in \Omega$ by definition $g_{p}(\nabla_\mathcal{M}  \mathcal{E}_{\omega} (p),\xi)=\mathrm{d}\mathcal{E}_{\omega}(p)[\xi]$ for all $\xi \in T_{p}\mathcal{M}$, where $\mathrm{d}\mathcal{E}_{\omega}(p)$ is the differential of $\mathcal{E}_\omega$ at point $p$. It follows that $\nabla_\mathcal{M}  \mathcal{E}_{\omega}(p)=0\in T_{p}\mathcal{M}$ if and only if $\text{rank}_p(\mathcal{E}_{\omega})=0$. This is exactly the case if for given point $p\in \mathcal{M}$ one has $\mathrm{d}\mathcal{E}_\omega(p)[\xi]=0$ for all $\xi \in T_p\mathcal{M}$, or, since the rank of the differential is chart independent, $D(\mathcal{E}_\omega\circ \varphi^{-1})(\varphi(p))=0\in (\mathbb{R}^{n})^*$, where $(\varphi,U)$, with $\varphi:U\subset \mathcal{M}\longrightarrow \varphi(U)\subset \mathbb{R}^n$, is an arbitrary chart around $p$. Suppose that we fixed a chart $(\varphi, U)$ around the critical point $p_0\in \mathcal{M}$ for parameter $\omega_0\in \Omega$ and consider the map
    \begin{align}
        \mathcal{Y}:\Omega\times\varphi(U) \longrightarrow (\mathbb{R}^n)^*,
    \end{align}
    where $\mathcal{Y}(\omega,v):=D(\mathcal{E}_\omega \circ \varphi^{-1})(v)$ for any $(\omega,v)\in \Omega\times \varphi(U)$.
    By assumption we have for the point $(\omega_0,\varphi(p_0))$ that $\mathcal{Y}(\omega_0,\varphi(p_0))=D(\mathcal{E}_{\omega_0} \circ \varphi^{-1})(\varphi(p_0))=0\in (\mathbb{R}^n)^*$. We would like to apply the real analytic implicit function theorem at this point. Since $D\mathcal{Y}(\omega_0,\varphi(p_0))=D^2(\mathcal{E}_{\omega_0} \circ \varphi^{-1}))(\varphi(p_0))$, it is necessary that the second derivative $D^2(\mathcal{E}_{\omega_0} \circ \varphi^{-1}))(\varphi(p_0)):\mathbb{R}^n\longrightarrow (\mathbb{R}^n)^*$ is a linear isomorphism, where again $D$ denotes the differential operator with respect to the variables that are independent of the parameters. Note that if we pick $(\varphi,U)$ to be normal coordinates around $p_0$, i.e. the Christoffel symbols vanish at $p_0$, we get 
    \begin{align}
        D^2(\mathcal{E}_{\omega_0}\circ \varphi^{-1})(\varphi(p_0))[v, u] = \langle\text{Hess}_{\varphi(p_0)}(\mathcal{E}_{\omega_0}\circ \varphi^{-1})[v],u\rangle,\qquad(\forall v, u \in \mathbb{R}^n),
    \end{align}
    where $\langle\cdot\,,\cdot\rangle$ is the Euclidean inner product on $\mathbb{R}^n$.
    Since we assumed that $\text{Hess}_{p_0}\mathcal{E}_{\omega_0}$ is non-degenerated it follows that $D^2(\mathcal{E}_{\omega_0}\circ \varphi^{-1})(\varphi(p_0))$ is an isomorphism. Therefore, we can apply the real analytic implicit function theorem on $\mathcal{Y}$ and obtain an open neighborhood $\Omega_0 \subset \Omega$ of $\omega_0$ such that there is real analytic $c:\Omega_0\longrightarrow V\subset \varphi(U)\subset \mathbb{R}^n$, where $V$ is an open neighborhood of $\varphi(p_0)$, with $\mathcal{Y}(\omega,c(\omega))=D(\mathcal{E}_{\omega}\circ \varphi^{-1})(c(\omega))=0 \in (\mathbb{R}^n)^*$ for all $\omega\in \Omega_0$. We can use this map to define $\gamma:=\varphi^{-1}\circ c$. We remind that, because the manifold is assumed to carry a real-analytic structure, all coordinate charts, and their inverse maps, are real-analytic. Hence $\gamma: \Omega_0 \longrightarrow \varphi^{-1}(V)=:\mathcal{M}_0$ is real analytic, as well. Note, since along this map the differential $\mathrm{d}\mathcal{E}_{\omega}(\gamma(\omega))$ has constant rank zero it follows that for any $\omega\in \Omega_0$ we have
    \begin{align}
        g_{\gamma(\omega)}(\nabla_\mathcal{M}  \mathcal{E}_\omega(\gamma(\omega)),\xi)=\mathrm{d}\mathcal{E}_{\omega}(\gamma(\omega))[\xi]=0,\qquad\qquad(\forall \xi \in T_{\gamma(\omega)}\mathcal{M}),
    \end{align}
    and hence $\nabla_\mathcal{M}  \mathcal{E}_\omega(\gamma(\omega))=0\in T_{\gamma(\omega)}\mathcal{M}$ for any $\omega\in \Omega_0$. 
\end{proof}

The setting of Theorem \ref{IFT} establishes the natural conditions that we require for our Hartree–Fock framework. For the Hartree–Fock energy functional \eqref{param_ene_func}, $\text{Hess}_{\mathscr{D}}\mathcal{E}_{\omega}:T_{\mathscr{D}}\mathcal{G}\longrightarrow T_{\mathscr{D}}\mathcal{G}$ is known (see e.g. \cite{laura, gasp}) and at any critical point $(\omega_0,\mathscr{D}_0)$ has the form of 
\begin{align}
    \text{Hess}_{\mathscr{D}_0}\mathcal{E}_{\omega_0}(X)&=[\mathscr{D}_0,[X,\mathbf{F}(\omega_0,\mathscr{D}_0)]]+[\mathscr{D}_0,[\mathscr{D}_0, D^2 E(\omega_0,\mathscr{D}_0)X]],\qquad(\forall X\in T_{\mathscr{D}_0}\mathcal{G}),
\end{align}
where $E$ is the flat ambient energy functional \eqref{ambient energy functional}. 
\begin{corollary}\label{HF_coroll}
Suppose that the discretized Hartree–Fock energy functional $\mathcal{E}_\omega$ in the orthogonal orbital basis \eqref{param_ene_func} is analytic on $\mathcal{G}$ and depends analytically on $\omega\in \Omega \subset \mathbb{R}^{3M}$. Furthermore, suppose that $(\omega_0,\mathscr{D}_0)\in \Omega \times \mathcal{G}$ is a critical point of $\mathcal{E}$, i.e.
\begin{align}
    \nabla_\mathcal{G}\mathcal{E}_{\omega_0}(\mathscr{D}_0)=[\mathscr{D}_0,[\mathscr{D}_0,\mathbf{F}(\omega_0,\mathscr{D}_0)]]=0
\end{align}
such that the linear map $H:T_{\mathscr{D}_0}\mathcal{G}\longrightarrow T_{\mathscr{D}_0}\mathcal{G}$ with 
\begin{align}
    X\mapsto H(X):=[\mathscr{D}_0,[X,\mathbf{F}(\omega_0,\mathscr{D}_0)]]+[\mathscr{D}_0,[\mathscr{D}_0, D^2E(\omega_0,\mathscr{D}_0)X]],
\end{align}
is an isomorphism. Then there exists an open subset $\Omega_0\subset \Omega$ and a real analytic map $\mathscr{D}:\Omega_0 \longrightarrow \mathcal{G}$ such that
\begin{align}
    \nabla_\mathcal{G}\mathcal{E}_{\omega}(\mathscr{D}(\omega))=[\mathscr{D}(\omega),[\mathscr{D}(\omega),\mathbf{F}(\omega,\mathscr{D}(\omega))]]=0, \qquad(\forall \omega\in \Omega_0).
\end{align}
\end{corollary}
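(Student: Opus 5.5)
The plan is to obtain the corollary as a direct specialization of Theorem \ref{IFT} with $(\mathcal{M},g)=(\mathcal{G},g)$, where $g(A,B)=\mathrm{Tr}(A^\top B)$ is the standard trace metric, $\mathcal{E}_\omega$ is the parametrized functional \eqref{param_ene_func}, and $p_0=\mathscr{D}_0$. The Grassmannian carries a real analytic structure, and as an embedded submanifold of $\mathbb{R}^{\mathrm{n}_\mathrm{b}\times \mathrm{n}_\mathrm{b}}$ its induced metric is real analytic. The analyticity of $\mathcal{E}_\omega$ on $\mathcal{G}$ and in $\omega$ is assumed. Moreover, $\mathcal{E}_\omega$ is the restriction of the ambient quadratic polynomial $E$ in \eqref{ambient energy functional}, whose coefficients are analytic in $\omega$, so its composition with any analytic chart is jointly analytic. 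This verifies the structural hypotheses of Theorem \ref{IFT}.

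Next I would check the criticality hypothesis. By \eqref{riem_grad} and \eqref{param_grad}, the Riemannian gradient is $P_{\mathscr{D}}\mathbf{F}(\omega,\mathscr{D})=[\mathscr{D},[\mathscr{D},\mathbf{F}(\omega,\mathscr{D})]]$. The assumption $[\mathscr{D}_0,[\mathscr{D}_0,\mathbf{F}(\omega_0,\mathscr{D}_0)]]=0$ is therefore exactly $\nabla_\mathcal{G}\mathcal{E}_{\omega_0}(\mathscr{D}_0)=0$.

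The only substantive step is identifying the map $H$ in the statement with $\mathrm{Hess}_{\mathscr{D}_0}\mathcal{E}_{\omega_0}$. I would invoke the known formula quoted just before the corollary (from \cite{laura,gasp}). For a self-contained derivation, I would use the submanifold formula $\mathrm{Hess}\,\mathcal{E}(\mathscr{D})[X]=P_\mathscr{D}\big(D(P_{(\cdot)}\nabla E(\cdot))(\mathscr{D})[X]\big)$. Differentiating $\mathscr{D}\mapsto[\mathscr{D},[\mathscr{D},\mathbf{F}]]$ in the direction $X\in T_{\mathscr{D}_0}\mathcal{G}$ gives two kinds of terms: those where $X$ replaces one of the $\mathscr{D}$'s, and the term $[\mathscr{D}_0,[\mathscr{D}_0,D^2E\,X]]$ coming from $D\mathbf{F}=D^2E$. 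For the first kind, I would use the tangent relations $\mathscr{D}_0X+X\mathscr{D}_0=X$ and $X=[\mathscr{D}_0,[\mathscr{D}_0,X]]$, together with the criticality condition. Criticality means $\mathbf{F}$ commutes with $\mathscr{D}_0$, since $[\mathscr{D}_0,[\mathscr{D}_0,\mathbf{F}]]=0$ with $\mathscr{D}_0$ a projector forces the off-diagonal blocks of $\mathbf{F}$ to vanish. With these relations, after projecting by $P_{\mathscr{D}_0}$, the first kind of terms reduces to $[\mathscr{D}_0,[X,\mathbf{F}(\omega_0,\mathscr{D}_0)]]$. This bookkeeping is where I expect the main (but purely computational) obstacle, in particular checking that the result lands in $T_{\mathscr{D}_0}\mathcal{G}$ and the signs match. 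The easiest route is to work in block form with respect to $\mathrm{Ran}\,\mathscr{D}_0\oplus\ker\mathscr{D}_0$, where $X$ is purely off-diagonal and $\mathbf{F}$ is block-diagonal.

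With $H=\mathrm{Hess}_{\mathscr{D}_0}\mathcal{E}_{\omega_0}$ established, $H$ being an isomorphism of $T_{\mathscr{D}_0}\mathcal{G}$ is precisely non-degeneracy of the Hessian. Theorem \ref{IFT} then gives an open $\Omega_0\ni\omega_0$ and a real analytic $\gamma:\Omega_0\to\mathcal{G}$ with vanishing Riemannian gradient. Setting $\mathscr{D}:=\gamma$ and rewriting the gradient via \eqref{param_grad} yields $[\mathscr{D}(\omega),[\mathscr{D}(\omega),\mathbf{F}(\omega,\mathscr{D}(\omega))]]=0$ for all $\omega\in\Omega_0$, which is the claim.
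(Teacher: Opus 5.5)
Your proposal is correct and takes the paper's route: the corollary follows by specializing Theorem \ref{IFT} to $(\mathcal{G},g)$ with $p_0=\mathscr{D}_0$, and $H$ is identified with $\mathrm{Hess}_{\mathscr{D}_0}\mathcal{E}_{\omega_0}$ via the formula the paper quotes just before the statement. Your block-form derivation of that formula is a correct self-contained supplement to what the paper only cites: criticality forces $[\mathscr{D}_0,\mathbf{F}]=0$, so the term $[X,[\mathscr{D}_0,\mathbf{F}]]$ drops out, and $[\mathscr{D}_0,[X,\mathbf{F}]]$ is already symmetric and off-diagonal, hence tangent.
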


We want to stress again that, rigorously speaking, in the case where we parameterize the displacement of the nuclei via a real analytic curve $\Gamma:I\longrightarrow\mathbb{R}^{3M}$, the assumption on the regularity of the discretized Hartree–Fock functional is rather mild. This is a consequence of the chemist community using exclusively Gaussian-type orbitals and orbitals derived from them. For higher dimensional parameter variation, to the best of our knowledge, it is not known whether the square root of the overlap and its inverse retain analyticity as established in the one-dimensional setting by Theorem \ref{sqrt_ana}. Under similar restrictions, we can establish an analogous result for wavefunction-based post–Hartree–Fock methods. 

\begin{theorem}\label{analytic_C}
    Suppose $\Gamma:I\longrightarrow \mathbb{R}^{3M}$ is a real analytic curve in the nuclear coordinate space. Furthermore, let $\mathcal{E}_\mu:= \mathcal{E}(\Gamma(\mu),\mathscr{D})$ be the Hartree–Fock energy functional \eqref{param_ene_func} parametrized along the trajectory $\Gamma$. If $\mathcal{E}_\mu$ is an analytic functional on $\mathcal{G}$ and is real analytic in $\mu$ on $I$, $(\mu_0,\mathscr{D}_0)\in I\times \mathcal{G}$ is a critical point and the linear map
    $H:T_{\mathscr{D}_0}\mathcal{G}\longrightarrow T_{\mathscr{D}_0}\mathcal{G}$ with
    \begin{align}
        X\mapsto H(X):= [\mathscr{D}_0,[X,\mathbf{F}(\Gamma(\mu_0),\mathscr{D}_0)]]+[\mathscr{D}_0,[\mathscr{D}_0, D^2E(\Gamma(\mu_0),\mathscr{D}_0)X]],
    \end{align}
    is an isomorphism. Then there exist real analytic curves $C:I\longrightarrow \mathbb{R}^{\mathrm{n}_\mathrm{b}\times \mathrm{n}_\mathrm{b}}$ and $\Lambda:I\longrightarrow \mathbb{R}^{\mathrm{n}_\mathrm{b}\times \mathrm{n}_\mathrm{b}}$ with
    \begin{align}\label{param_dep_roothaan}
        \mathbf{F}(\mu,\mathscr{D}(\mu))C(\mu)=C(\mu)\Lambda(\mu), \quad C^\top(\mu) C(\mu)=\mathrm{I},\qquad(\forall \mu \in I).
    \end{align}
\end{theorem}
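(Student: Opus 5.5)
We begin by invoking Corollary \ref{HF_coroll} (equivalently Theorem \ref{IFT}) for the one-dimensional parameter $\mu$. Since $\mathcal{E}_\mu$ is analytic on $\mathcal{G}$ and real analytic in $\mu$, and $H$ is an isomorphism at the critical point $(\mu_0,\mathscr{D}_0)$, there is an open interval around $\mu_0$ and a real analytic curve of critical points $\mu\mapsto\mathscr{D}(\mu)\in\mathcal{G}$ with $\mathscr{D}(\mu_0)=\mathscr{D}_0$. The statement writes $C$ on $I$, so we read $I$ as this (possibly shrunk) neighbourhood. Because $\mathcal{G}\subset\mathbb{R}^{\mathrm{n}_\mathrm{b}\times\mathrm{n}_\mathrm{b}}$ is an embedded real analytic submanifold, $\mathscr{D}(\mu)$ is also analytic as a matrix-valued map. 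Composing with the Fock map $(\omega,\mathscr{D})\mapsto\mathbf{F}(\omega,\mathscr{D})=\mathbf{h}(\omega)+\mathbf{G}(\omega,\mathscr{D})$, which is analytic in $\omega$ by the analyticity condition and affine in $\mathscr{D}$, we get a real analytic, symmetric matrix curve
\begin{align*}
  \mathbf{F}(\mu):=\mathbf{F}(\Gamma(\mu),\mathscr{D}(\mu)).
\end{align*}

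Next we apply the Rellich--Kato theorem for analytic one-parameter families of real symmetric matrices \cite{Kato:1966:PTL,rellich1969perturbation}, as specialized in \cite{Dieci_Luca}; this is the same source behind Theorem \ref{sqrt_ana}. It gives real analytic eigenvalue branches $\lambda_1(\mu),\dots,\lambda_{\mathrm{n}_\mathrm{b}}(\mu)$ and a real analytic orthogonal matrix $C(\mu)$ with $\mathbf{F}(\mu)C(\mu)=C(\mu)\Lambda(\mu)$, where $\Lambda(\mu)=\mathrm{diag}(\lambda_k(\mu))$. Since $C(\mu)$ is orthogonal, $C^\top C=\mathrm{I}$ holds automatically. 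This is precisely \eqref{param_dep_roothaan}.

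It remains to check the link in Lemma \ref{Root_dens_corr}: that $\mathscr{D}(\mu)$ is the projector onto a block of $N$ columns of $C(\mu)$. Criticality $[\mathscr{D},[\mathscr{D},\mathbf{F}]]=0$ together with $\mathscr{D}^2=\mathscr{D}$ gives $[\mathscr{D}(\mu),\mathbf{F}(\mu)]=0$. Hence $\mathrm{Ran}\,\mathscr{D}(\mu)$ is an $N$-dimensional $\mathbf{F}(\mu)$-invariant subspace. To fix which columns span it, we choose the analytic eigenbasis adapted to the commuting analytic family $\{\mathbf{F}(\mu),\mathscr{D}(\mu)\}$. Concretely, we diagonalize the analytic symmetric matrices $\mathscr{D}(\mu)\mathbf{F}(\mu)\mathscr{D}(\mu)$ on $\mathrm{Ran}\,\mathscr{D}(\mu)$ and $(\mathrm{I}-\mathscr{D})\mathbf{F}(\mathrm{I}-\mathscr{D})$ on its complement separately. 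Each range varies analytically, so we first obtain an analytic orthonormal frame of it (for example via Kato's transformation function), and then apply Rellich within that frame. Concatenating the two frames yields an analytic $C(\mu)$ whose first $N$ columns span $\mathrm{Ran}\,\mathscr{D}(\mu)$.

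The main obstacle is that the analytic eigenvalue branches may cross. The analytic $C(\mu)$ we construct will therefore generally not follow aufbau ordering, and it is unique only up to analytic re-gauging. We handle this by claiming only existence of some analytic $C$, $\Lambda$, as the theorem states, and we do not insist on sorted $\Lambda$. The technical step that must be stated carefully is the existence of an analytic orthonormal frame for the analytic projector curve $\mathscr{D}(\mu)$. For this we use Kato's reduction ODE $U'=[\mathscr{D}',\mathscr{D}]U$ with $U(\mu_0)=\mathrm{I}$: it has analytic coefficients, so its solution is analytic and orthogonal and satisfies $\mathscr{D}(\mu)=U(\mu)\mathscr{D}_0U(\mu)^\top$.
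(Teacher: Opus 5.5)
Your proposal is correct and follows the paper's proof: you obtain the analytic critical curve $\mathscr{D}(\mu)$ from Corollary \ref{HF_coroll}, compose it into an analytic symmetric Fock-matrix curve, and apply Rellich--Kato to that curve. Your extra steps go beyond what the paper writes and are sound: reading $I$ as the neighbourhood delivered by the implicit function theorem (which is also all the paper's argument yields), and using Kato's transformation function so that the first $N$ columns of $C(\mu)$ span $\mathrm{Ran}\,\mathscr{D}(\mu)$ even without a HOMO--LUMO gap.
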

\begin{proof}
    By using the analytic curve located on $\mathcal{G}$ provided by Corollary \ref{HF_coroll}, one can construct an analytic Fock matrix. For this matrix, the classical Kato–Rellich theory applies \cite{rellich1969perturbation, Kato:1966:PTL}.
\end{proof}
Note that for the analytical curve $C:I\longrightarrow \mathbb{R}^{\text{n}_\text{b}\times \text{n}_\text{b}}$, provided by Theorem \ref{analytic_C}, to be properly ordered according to \eqref{occ_virt_C_matrix} it is necessary to have a positive HOMO-LUMO gap $\gamma>0$ across all of $I$, i.e.
\begin{align}
    \lambda_{N+1}(\mu)-\lambda_N(\mu)\ge \gamma >0, \qquad(\forall \mu\in I).
\end{align}
In this case $\mathscr{D}(\mu)=C_{\text{occ}}(\mu)C_{\text{occ}}(\mu)^\top$. We want to emphasize that Kato's theory on eigenvector perturbation only provides a choice for analytic eigenvectors. This implies that when degeneracies occur in either the occupied or virtual orbital energies, identifying an appropriate choice can become difficult. More often than not the orbital energies are completely non-degenerate and only show isolated crossings at certain critical geometries $\mu\in I$. When all orbital energies are well separated, the orthonormality constraint and the aufbau principle admit a unique set of molecular orbitals, with each orbital defined up to an independent choice of sign. Whenever a degeneracy occurs in one or more orbital energies at an isolated geometry $\mu\in I$, the resulting non-uniqueness can lead to index flips, which damage the regularity. All these issues will be discussed in a later section.\\
\newline
Now that we have developed when $\mathscr{D}(\omega)$ is an analytic function in $\omega$, and when also $C(\omega)$ can be chosen to vary analytically, we want to be precise under what circumstances the atomic orbitals $\mathfrak{A}(\omega)$ will provide us with analytic molecular orbitals $\mathfrak{M}(\omega)$ in the LCAO (linear combination of atomic orbitals) ansatz. In order to do so, we additionally assume that the atomic orbitals $\mathfrak{A}(\omega)$ are Banach space valued analytic functions in the parameter $\omega$, i.e. for any $i\le \text{n}_\text{b}$ the function
\begin{align}\label{Banach_anal_cond}
    \chi_i:\Omega\longrightarrow \mathrm{L}^2(\mathbb{R}^3)
\end{align}
is a real analytic Banach space valued function. Hence, assuming $C(\omega)$ is analytic, for any $i\le \text{n}_\text{b}$ the molecular orbital $\psi_i:\Omega\longrightarrow\mathrm{L}^2(\mathbb{R}^3)$ with
\begin{align}
    \psi_i(\omega)=\sum_{j=1}^{\text{n}_\text{b}}C_{ij}(\omega)\chi_j(\omega),\qquad (\forall \omega \in \Omega),
\end{align}
is a real analytic Banach space valued function, too. Note that condition \eqref{Banach_anal_cond} is satisfied in the case of GTO's \eqref{GTO}.

\subsection{Parameter Dependence of the Coupled Cluster Method}

In this section we will establish similar regularity results for the coupled cluster amplitudes $(t_\nu(\omega))_{\nu \in \mathscr{I}}\subset \mathbb{R}$. We claim that the analyticity assumptions, which are necessary for the coupled cluster method, are justified by the results of the previous section. Since we work with a set of parameter dependent molecular orbitals $\mathfrak{M}(\omega)$, obtained by the Hartree–Fock method, the Slater basis $\mathcal{S}(\mathfrak{M}(\omega))$, the excitation operators \eqref{exc_op} and the second quantized Hamiltonian \eqref{sec_quan_Hamil} also depend on $\omega$. We will denote the parameter dependent Hamiltonian by $\mathcal{H}_{\mathfrak{M}}(\omega):=\mathcal{H}_{\mathfrak{M}(\omega)}$.  The parameter-dependent coupled-cluster function, for any $\ell\le N$, is then defined as
\begin{align}\label{param_cc_func}
    \mathcal{Q}:\Omega\times \mathbb{R}^{|\mathscr{I}_\ell|}\longrightarrow \mathbb{R}^{|\mathscr{I}_\ell|}, \quad (\omega, t)\mapsto \mathcal{Q}(\omega,t),
\end{align}
where for any $\nu\in \mathscr{I}_\ell$ we have $\mathcal{Q}_\nu(\omega,t):=\langle \Psi_\nu(\omega),\mathrm{e}^{-T(\omega,t)}\mathcal{H}_\mathfrak{M}(\omega)\mathrm{e}^{T(\omega,t)}\Psi_0(\omega) \rangle_{\mathrm{L}^2}$. Here, $T$ is the cluster function
\begin{align}\label{cluster_function}
    T:\Omega\times\mathbb{R}^{|\mathscr{I}_\ell|}\longrightarrow \mathcal{B}(\mathfrak{F}(\mathfrak{h})), \quad (\omega, t)\mapsto \sum_{\nu\in \mathscr{I}_\ell}t_\nu \mathscr{X}_\nu(\omega).
\end{align}
Note that we restrict the function to real-valued amplitudes as input, since the solution amplitudes are known to be real.
\begin{lemma}\label{creation_annihili_analytic}
    Suppose $\mathfrak{M}(\omega)=\lbrace \psi_i(\omega)\rbrace_{i=1}^N\cup \lbrace \psi_a(\omega)\rbrace_{a=N+1}^{\mathrm{n}_\mathrm{b}}$ is set of real analytic molecular orbitals as Banach space-valued functions. Then the corresponding creation operators $\lbrace a^\dagger_i(\omega)\rbrace_{i=1}^N\cup \lbrace a^\dagger_a(\omega)\rbrace_{a=N+1}^{\mathrm{n}_\mathrm{b}} $ and annihilation operators $\lbrace a_i(\omega)\rbrace_{i=1}^N\cup \lbrace a_a(\omega)\rbrace_{a=N+1}^{\mathrm{n}_\mathrm{b}}$ are real analytic, as well. Moreover, any excitation operator $\mathscr{X}_\nu(\omega)$, for $\nu\in \mathscr{I}$, and the cluster function \eqref{cluster_function} are also real analytic. 
\end{lemma}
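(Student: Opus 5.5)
The plan is to show that the map $f\mapsto a^\dagger(f)$ is a bounded real-linear map from $\mathrm{L}^2(\mathbb{R}^3)$ into the bounded operators on the Fock space. Analyticity of $\omega\mapsto\psi_q(\omega)$ then transfers to $\omega\mapsto a^\dagger_q(\omega)$ by composition. Afterwards, products and finite sums preserve real analyticity.

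There is a subtlety to settle first: the Fock space $\mathfrak{F}(\mathfrak{h})$ with $\mathfrak{h}=\mathrm{span}\,\mathfrak{A}(\omega)$ itself moves with $\omega$. I would therefore regard all operators as acting on the fixed ambient fermionic Fock space $\mathfrak{F}(\mathrm{L}^2(\mathbb{R}^3))$, truncated at particle number $K$. This space contains every $\mathfrak{F}(\mathfrak{h}(\omega))$, and the formulas for $a^\dagger(f)$ and $a(f)$ make sense there for any $f\in\mathrm{L}^2$.

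\textbf{Step 1: creation and annihilation operators.} By the explicit formulas, $a^\dagger(f)$ is real-linear in $f$ and $a(f)$ is conjugate-linear in $f$. For real orbitals, as here, conjugation does not matter. On $\mathfrak{h}^{\wedge L}$ the standard estimate $\|a^\dagger(f)\Psi\|\le\|f\|_{\mathrm{L}^2}\|\Psi\|$ holds; on antisymmetric functions $a^\dagger(f)$ is in fact an isometry up to the factor $\|f\|$, by the CAR. Hence $J:f\mapsto a^\dagger(f)$ is a bounded linear map $\mathrm{L}^2(\mathbb{R}^3)\to\mathcal{B}(\mathfrak{F})$. If $\psi_q(\omega)=\sum_\alpha c_\alpha(\omega-\omega_0)^\alpha$ converges in $\mathrm{L}^2$ near $\omega_0$, then $a^\dagger_q(\omega)=\sum_\alpha J(c_\alpha)(\omega-\omega_0)^\alpha$ converges in operator norm, because $\|J(c_\alpha)\|\le\|c_\alpha\|$. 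So $a^\dagger_q$ is real analytic. The same argument, or simply taking adjoints (which is an isometric real-linear operation on $\mathcal{B}(\mathfrak{F})$), gives analyticity of $a_q(\omega)$.

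\textbf{Step 2: excitation operators and the cluster function.} $\mathcal{B}(\mathfrak{F})$ is a Banach algebra, and operator multiplication is a continuous bilinear map. Therefore the product of finitely many real-analytic $\mathcal{B}(\mathfrak{F})$-valued functions is real analytic: the Cauchy product of norm-convergent power series converges absolutely. Applying this to the finite product in \eqref{exc_op} shows that each $\mathscr{X}_\nu(\omega)$ is real analytic.

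Finally, $T(\omega,t)=\sum_{\nu\in\mathscr{I}_\ell}t_\nu\mathscr{X}_\nu(\omega)$ is a finite sum. Each term is the product of the polynomial, hence analytic, map $t\mapsto t_\nu$ with an analytic operator-valued function of $\omega$. So $T$ is jointly real analytic on $\Omega\times\mathbb{R}^{|\mathscr{I}_\ell|}$.

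I expect the only real obstacle to be the functional-analytic setup in the preliminary step: fixing a common, $\omega$-independent Fock space and verifying the uniform bound on $\|a^\dagger(f)\|$ given the normalisation in the paper's formula. Once boundedness and linearity of $J$ are in place, the rest follows from standard Banach-algebra arguments.
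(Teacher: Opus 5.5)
Your proposal is correct and follows essentially the same route as the paper. Linearity and boundedness of $f\mapsto a^\dagger(f)$ from $\mathrm{L}^2(\mathbb{R}^3)$ into $\mathcal{B}(\mathfrak{F}(\mathrm{L}^2(\mathbb{R}^3)))$ turn the $\mathrm{L}^2$-convergent power series of $\psi_q(\omega)$ into an operator-norm convergent one. Finite products and linear combinations then give analyticity of $\mathscr{X}_\nu(\omega)$ and $T(\omega,t)$.

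One inaccuracy, which your argument never uses: $a^\dagger(f)$ is not an isometry up to the factor $\|f\|_{\mathrm{L}^2}$. It annihilates the nonzero vector $a^\dagger(f)\Psi$, since $a^\dagger(f)^2=0$. What holds is the operator-norm identity $\|a^\dagger(f)\|=\|f\|_{\mathrm{L}^2}$, equivalently the bound $\|a^\dagger(f)\Psi\|\le\|f\|_{\mathrm{L}^2}\|\Psi\|$ you state, and that is all you need.
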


\begin{proof}
    It suffices to prove the analyticity for one creation operator $a^\dagger_i(\omega)=a^\dagger(\psi_i(\omega))$. Let $\omega_0\in\Omega$ be an arbitrary parameter and $U\subset \Omega$ an open neighborhood for which
    \begin{align*}
        \psi_i(\omega)=\sum_{\alpha\in \mathbb{N}^{3M}}\psi_{i,\alpha}(\omega-\omega_0)^\alpha,\qquad(\forall \omega\in U),
    \end{align*}
    where $\psi_{i,\alpha}\in \mathrm{L}^2(\mathbb{R}^3)$ for all $\alpha \in \mathbb{N}^{3M}$ and where convergence is measured in the $\mathrm{L}^2$-norm. Since the operator that assigns functions to creation operators $a^\dagger:\mathrm{L}^2(\mathbb{R}^3)\longrightarrow\mathcal{B}(\mathfrak{F}(\mathrm{L}^2(\mathbb{R}^3))$ is linear and  continuous, it follows that
    \begin{align}\label{series_expansion_creation}
        a^\dagger(\psi_i(\omega))&=a^\dagger\Big(\sum_{\alpha\in \mathbb{N}^{3M}}\psi_{i,\alpha}(\omega-\omega_0)^\alpha\Big)\\
        &=\sum_{\alpha\in \mathbb{N}^{3M}}a^\dagger(\psi_{i,\alpha})(\omega-\omega_0)^\alpha.
    \end{align}
    Due to $\|\psi\|_{\mathrm{L}^2}=\|a^\dagger(\psi)\|_{\mathcal{B}(\mathfrak{F}(\mathrm{L}^2(\mathbb{R}^3)))}$ for any $\psi \in \mathrm{L^2(\mathbb{R}^3})$, it follows that the convergence of the operator series is in the $\mathcal{B}(\mathfrak{F}(\mathrm{L}^2(\mathbb{R}^3))$ norm. Hence, $a^\dagger_i:\Omega\longrightarrow \mathcal{B}(\mathfrak{F}(\mathrm{L}^2(\mathbb{R}^3))$ is an analytic Banach space valued function. The exact same arguments can be made for the annihilation operators. Furthermore, any excitation operator is just the composition of creations and annihilations and, therefore, also real analytic. Finally, it is easy to see that the cluster function \eqref{cluster_function}, as a linear combination of excitations, is analytic, as well. 
\end{proof} As a consequence to Lemma \ref{creation_annihili_analytic}, given molecular orbitals $\mathfrak{M}(\omega)=\lbrace \psi_i(\omega)\rbrace_{i=1}^N\cup \lbrace \psi_a(\omega)\rbrace_{a=N+1}^{\mathrm{n}_\mathrm{b}}$ that are Banach space-valued analytic functions, the corresponding set of Slater determinants $\mathcal{S}(\mathfrak{M}(\omega))$ is also analytic as Banach space-valued functions. Moreover, due to Lemma \ref{creation_annihili_analytic} it is sufficient to have analytic molecular orbitals $\mathfrak{M}(\omega)$ to assure that the coupled cluster function \eqref{param_cc_func} consists of real analytic components, i.e. for any $\nu\in \mathscr{I}_\ell$
\begin{align}
    \mathcal{Q}_\nu:\Omega\times \mathbb{R}^{|\mathscr{I}_\ell|}\longrightarrow \mathbb{R},\quad (\omega,t)\mapsto \langle \Psi_\nu(\omega),\mathrm{e}^{-T(\omega,t)}\mathcal{H}_\mathfrak{M}(\omega)\mathrm{e}^{T(\omega,t)}\Psi_0(\omega) \rangle_{\mathrm{L}^2},
\end{align}
is a real analytic function. Besides the assumption on the regularity of $\mathcal{Q}$, we also need a non-degeneracy assumption.
\begin{definition}\label{non-degen_defi} Let $\ell\le N$ be a fixed excitation level, $\mathfrak{M}(\omega)= \lbrace \psi_i(\omega)\rbrace_{i=1}^N\cup \lbrace \psi_a(\omega)\rbrace_{a=N+1}^{\mathrm{n}_\mathrm{b}}$ be a set of molecular orbitals and $\mathcal{Q}$ the coupled cluster function with respect to the truncated Slater basis $\mathcal{S}_\ell(\mathfrak{M}(\omega))$ (see \eqref{trunc_slater_basis}). 
    A root $(\omega^*, t^*)$ of the parameter dependent coupled cluster function $\mathcal{Q}$ is a non-degenerate root if for $\mathfrak{V}_\ell:=\mathrm{span}_{\mathbb{C}}\,\mathcal{S}_\ell(\mathfrak{M}(\omega^*))\setminus  \lbrace \Psi_0(\omega^*) \rbrace$ 
    \begin{align}\label{non-degen-assum}
        E_{CC} \notin \sigma\big(P_{\mathfrak{V}_\ell}\mathrm{e}^{-T(\omega^*,t^*)}\mathcal{H}_{\mathfrak{M}}(\omega^*)\mathrm{e}^{T(\omega^*,t^*)}P_{\mathfrak{V}_\ell}\big),
    \end{align}
  where $E_{CC}$ is the coupled cluster energy at point $(\omega^*,t^*)$ (see \eqref{cc_energy}) and $P_{\mathfrak{V}_\ell}$ the orthogonal projector onto $\mathfrak{V}_\ell$.
\end{definition}

In order to better understand the condition \eqref{non-degen-assum}, first consider FCC, i.e. the case $\ell = N$. Here, $E_{CC}=E_\ell$ is a true eigenvalue of the discretized Hamiltonian and $\Psi_0(\omega_*)$ is a corresponding eigenvector of the transformed Hamiltonian. Therefore, condition \eqref{non-degen-assum} is equivalent to $E_\ell$ being a non-degenerate eigenvalue of $\mathcal{H}_{\mathfrak{M}}(\omega^*)$. This assumption has been used in several related works, including \cite{hassan2025analysissinglereferencecoupled,Csirik_part2}. For $\ell < N$, the coupled-cluster energy is not necessarily an eigenvalue of $\mathcal{H}_{\mathfrak M}(\omega^*)$. In this setting, non-degeneracy is determined solely by the spectrum of the similarity-transformed Hamiltonian on the space spanned by the excited determinants $\mathfrak{V}_\ell$. This is closely related to the study of excited states in \textit{equation-of-motion coupled-cluster} (EOM-CC) theory \cite{Shavitt_Bartlett_2009}. More specifically, condition \eqref{non-degen-assum} just excludes  zero excitation energies, which is typically the case when EOM-CC is applied in practice. The following theorem on the regularity was established in the master thesis \cite{beck2025interpolation}.
\begin{theorem}\label{analytic_amplitudes}

    Let $\Omega\subset \mathbb{R}^{3M}$ be an open subset of the nuclear coordinate space and $\ell\le N$ be a fixed excitation level. Furthermore, let  $\mathfrak{M}(\omega)= \lbrace \psi_i(\omega)\rbrace_{i=1}^N\cup \lbrace \psi_a(\omega)\rbrace_{a=N+1}^{\mathrm{n}_\mathrm{b}}$ be a set of real analytic Banach space valued molecular orbitals and $\mathcal{S}_\ell(\mathfrak{M}(\omega))$ the corresponding truncated set of Slater determinants. If $t^* \in \mathbb{R}^{|\mathscr{I}_\ell|}$ is the cluster amplitude for a certain nuclear geometry $\omega^*\in \Omega $ with 
    \begin{align}
        \mathcal{Q}(\omega^*,t^*)=0,
    \end{align}
    such that $(\omega^*,t^*)$ is non-degenerate in the sense of Definition  \ref{non-degen_defi}, then there exists an open set $V\subset \Omega$ with $\omega^* \in V$ and an open set $U \subset \mathbb{R}^{|\mathscr{I}_\ell|}$ with $t^*\in U$, such that there is a unique, real analytic function $t:V\longrightarrow U$ with 
    \begin{align}
        \big\lbrace (\omega,t(\omega)) \,|\,  \omega\in V\big\rbrace=\big\lbrace  (\omega,t)\in V\times U \,|\,  \mathcal{Q}(\omega,t)=0\big\rbrace.
    \end{align}
\end{theorem}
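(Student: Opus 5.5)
The approach is to apply the real analytic implicit function theorem to $\mathcal{Q}:\Omega\times\mathbb{R}^{|\mathscr{I}_\ell|}\to\mathbb{R}^{|\mathscr{I}_\ell|}$ at $(\omega^*,t^*)$. This requires two things: joint real analyticity of $\mathcal{Q}$, and invertibility of the partial Jacobian $D_t\mathcal{Q}(\omega^*,t^*)$. The conclusion of the implicit function theorem, including local uniqueness in a box $V\times U$, is then exactly the set equality stated in Theorem \ref{analytic_amplitudes}.

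\textbf{Analyticity.} By Lemma \ref{creation_annihili_analytic}, the maps $\omega\mapsto a_p(\omega)$, $a^\dagger_p(\omega)$, $\mathscr{X}_\nu(\omega)$ are analytic into $\mathcal{B}(\mathfrak{F}(\mathfrak{h}))$. The Slater determinants $\Psi_\nu(\omega)=\mathscr{X}_\nu(\omega)\Psi_0(\omega)$ are analytic as well. The Hamiltonian $\mathcal{H}_\mathfrak{M}(\omega)$ is a polynomial in creation and annihilation operators. Its coefficients $h_{pq}(\omega)$ and $q^{rs}_{pq}(\omega)$ are finite sums of products of entries of $C(\omega)$ with the analytic AO integrals, so the Hamiltonian is analytic too. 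By nilpotency (Proposition \ref{proposition_excitation_comm}), $\mathrm{e}^{\pm T(\omega,t)}$ are finite polynomials in $t$ with analytic operator coefficients. Hence each $\mathcal{Q}_\nu(\omega,t)$ is a polynomial in $t$ with real analytic coefficients in $\omega$, and is therefore jointly real analytic. It is real valued because the orbitals and amplitudes are real.

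\textbf{Jacobian computation (the main step).} Write $\bar H:=\mathrm{e}^{-T^*}\mathcal{H}_\mathfrak{M}(\omega^*)\mathrm{e}^{T^*}$. For a direction $s$ with cluster operator $S=T(\omega^*,s)$, the commutativity of excitation operators gives $\frac{d}{d\varepsilon}\mathrm{e}^{-(T^*+\varepsilon S)}\mathcal{H}\mathrm{e}^{T^*+\varepsilon S}|_{\varepsilon=0}=[\bar H,S]$. Hence
\begin{align*}
(D_t\mathcal{Q}(\omega^*,t^*)s)_\nu=\langle\Psi_\nu,\bar H S\Psi_0\rangle-\langle\Psi_\nu,S\bar H\Psi_0\rangle.
\end{align*}
In the first term, $S\Psi_0=\sum_\eta s_\eta\Psi_\eta\in\mathfrak{V}_\ell$, so it equals $\langle\Psi_\nu,P_{\mathfrak{V}_\ell}\bar H P_{\mathfrak{V}_\ell}S\Psi_0\rangle$. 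For the second term, insert the resolution of identity over the full Slater basis: $\langle\Psi_\nu,S\bar H\Psi_0\rangle=\sum_\kappa\langle\Psi_\nu,S\Psi_\kappa\rangle\langle\Psi_\kappa,\bar H\Psi_0\rangle$.

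The index $\kappa=0$ contributes $E_{CC}\,s_\nu$. For $\kappa\in\mathscr{I}_\ell$, the coefficient $\langle\Psi_\kappa,\bar H\Psi_0\rangle=\mathcal{Q}_\kappa(\omega^*,t^*)=0$. For $\kappa$ of rank greater than $\ell$, $S\Psi_\kappa$ has excitation rank greater than $\ell$, since excitations only raise the rank, so it is orthogonal to $\Psi_\nu$. Therefore
\begin{align*}
D_t\mathcal{Q}(\omega^*,t^*)\cong P_{\mathfrak{V}_\ell}\bar HP_{\mathfrak{V}_\ell}-E_{CC}\,\mathrm{I}
\end{align*}
on $\mathfrak{V}_\ell$, via the isomorphism $s\mapsto S\Psi_0$; this uses the orthonormality of the basis $\{\Psi_\nu\}$. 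The bookkeeping here, in particular the rank argument that kills the higher-rank terms, is the point requiring the most care.

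By the non-degeneracy condition \eqref{non-degen-assum}, this operator is invertible, so $D_t\mathcal{Q}(\omega^*,t^*)$ is an isomorphism of $\mathbb{R}^{|\mathscr{I}_\ell|}$. The real analytic implicit function theorem then yields open sets $V\ni\omega^*$ and $U\ni t^*$ and a unique analytic $t:V\to U$ whose graph is exactly the zero set of $\mathcal{Q}$ in $V\times U$.
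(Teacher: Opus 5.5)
Your proposal is correct and follows the paper's proof. Both establish analyticity of $\mathcal{Q}$ via Lemma~\ref{creation_annihili_analytic}, show that the partial Jacobian at $(\omega^*,t^*)$ equals the matrix of $P_{\mathfrak{V}_\ell}\mathrm{e}^{-T^*}\mathcal{H}_\mathfrak{M}(\omega^*)\mathrm{e}^{T^*}P_{\mathfrak{V}_\ell}$ on $\mathfrak{V}_\ell$ minus $E_{\ell}\,\mathrm{I}$, and then invoke the analytic implicit function theorem under the non-degeneracy assumption. Your resolution-of-identity and rank-counting treatment of the term $-\langle\Psi_\nu,S\,\mathrm{e}^{-T^*}\mathcal{H}_\mathfrak{M}\mathrm{e}^{T^*}\Psi_0\rangle$ is the dual of the paper's case analysis of $\mathscr{X}_\nu^\dagger\Psi_\upsilon$, and is slightly cleaner because it avoids operator identities such as $\mathscr{X}^\dagger_\nu\mathscr{X}_\nu=\mathrm{I}$, which hold only when applied to $\Psi_0$.
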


\begin{proof}
    By the regularity assumptions on the molecular orbitals and Lemma \ref{creation_annihili_analytic} the  coordinate functions $\mathcal{Q}_\upsilon(\omega,t)$ are real analytic for any $\upsilon\in \mathscr{I}_\ell$. Hence, in order to use the implicit function theorem for analytic functions, we only need to prove that the partial Jacobian 
    \begin{align}\label{jacobian_Q}
        \left[ \frac{\partial \mathcal{Q}_\upsilon (\omega^*,t^*)}{\partial t_\nu}\right]_{\upsilon,\nu=1}^{|\mathscr{I}_\ell|}
    \end{align}
    is a regular matrix. In order to prove this condition let us first calculate an arbitrary relevant partial derivative of a coordinate function, i.e.
    \begin{align*}
        \frac{\partial \mathcal{Q}_\upsilon }{\partial t_\nu}&= \frac{\partial \langle \Psi_\upsilon(\omega),\mathrm{e}^{-T(\omega,t)}\mathcal{H}_\mathfrak{M}(\omega)\mathrm{e}^{T(\omega,t)}\Psi_0(\omega)\rangle_{\mathrm{L}^2} }{\partial t_\nu}\\
        &=\big\langle \Psi_\upsilon(\omega),\big(-\mathscr{X}_\nu(\omega)\mathrm{e}^{-T(\omega,t)}\mathcal{H}_\mathfrak{M}(\omega)\mathrm{e}^{T(\omega,t)}+\mathrm{e}^{-T(\omega,t)}\mathcal{H}_\mathfrak{M}(\omega)\mathscr{X}_\nu(\omega)\mathrm{e}^{T(\omega,t)}\big)\Psi_0(\omega)\big\rangle_{\mathrm{L}^2}\\
        &=-\big\langle \Psi_\upsilon(\omega),\mathscr{X}_\nu(\omega)\mathrm{e}^{-T(\omega,t)}\mathcal{H}_\mathfrak{M}(\omega)\mathrm{e}^{T(\omega,t)}\Psi_0(\omega)\big\rangle_{\mathrm{L}^2}+\big\langle \Psi_\upsilon(\omega),\mathrm{e}^{-T(\omega,t)}\mathcal{H}_\mathfrak{M}(\omega)\mathrm{e}^{T(\omega,t)}\Psi_\nu(\omega)\big\rangle_{\mathrm{L}^2},        \end{align*}
        where we used that all excitation operators and cluster operators commute (see Proposition \ref{proposition_excitation_comm}). In the following for simplicity we will use the notation $T^*:=T(\omega^*,t^*)$. The first summand at the solution to the discretized and truncated coupled cluster equation reads
    \begin{align*}
        \langle \Psi_\upsilon(\omega^*),\mathscr{X}_\nu(\omega^*)\mathrm{e}^{-T^*}\mathcal{H}_\mathfrak{M}(\omega^*)\mathrm{e}^{T^*}\Psi_0(\omega^*)\rangle_{\mathrm{L}^2}&=\langle \mathscr{X}^{\dagger}_\nu(\omega^*)\Psi_\upsilon(\omega^*),\mathrm{e}^{-T^*}\mathcal{H}_\mathfrak{M}(\omega^*)\mathrm{e}^{T^*}\Psi_0(\omega^*)\rangle_{\mathrm{L}^2}.
        \intertext{Notice that $\mathscr{X}^{\dagger}_\nu(\omega^*)\Psi_\upsilon(\omega^*)=\mathscr{X}^\dagger_\nu(\omega^*)\mathscr{X}_\upsilon(\omega^*)\Psi_0(\omega^*)$. There are three cases to consider. Either the virtual and occupied indices of $\nu$ are contained within the occupied and virtual indices of $\upsilon$, in this case  $\mathscr{X}^\dagger_\nu(\omega^*)\mathscr{X}_\upsilon(\omega^*)=\mathscr{X}_\eta(\omega^*)$ for some excitation $\eta \in \mathscr{I}_\ell$, or $\nu=\upsilon$, in which case $\mathscr{X}^\dagger_\nu(\omega^*)\mathscr{X}_\nu(\omega^*)=\mathrm{I}$. Otherwise $\mathscr{X}^\dagger_\nu(\omega^*)\mathscr{X}_\upsilon(\omega^*)=0$. So we have}
        -\langle \Psi_\upsilon(\omega^*),\mathscr{X}_\nu(\omega^*)\mathrm{e}^{-T^*}\mathcal{H}_\mathfrak{M}(\omega^*)\mathrm{e}^{T^*}\Psi_0(\omega^*)\rangle_{\mathrm{L}^2}&=  \begin{cases}
        \,0, \\
-\langle \Psi_\eta(\omega^*),\mathrm{e}^{-T^*}\mathcal{H}_\mathfrak{M}(\omega^*)\mathrm{e}^{T^*}\Psi_0(\omega^*)\rangle_{\mathrm{L}^2}, \\
-\langle \Psi_0(\omega^*),\mathrm{e}^{-T^*}\mathcal{H}_\mathfrak{M}(\omega^*)\mathrm{e}^{T^*}\Psi_0(\omega^*)\rangle_{\mathrm{L}^2}.
\end{cases}
    \end{align*}
    Since $(\omega^*,t^*)$ is a solution, the second case also vanishes and the third case returns the (possibly truncated) coupled cluster energy $E_{\ell}$, i.e.
    \begin{align}
        \langle \Psi_\upsilon(\omega^*),-\mathscr{X}_\nu(\omega^*)\mathrm{e}^{-T^*}\mathcal{H}_\mathfrak{M}(\omega^*)\mathrm{e}^{T^*}\Psi_0(\omega^*)\rangle_{\mathrm{L}^2}=-E_{\ell}\delta_{\upsilon \nu}.
    \end{align}
    Inserting this into the derivative expression, we obtain
    \begin{align*}
        \frac{\partial \mathcal{Q}_\upsilon (\omega^*,t^*)}{\partial t_\nu}=\big\langle \Psi_\upsilon(\omega^*),\mathrm{e}^{-T^*}\mathcal{H}_\mathfrak{M}(\omega^*)\mathrm{e}^{T^*}\Psi_\nu(\omega^*)\big\rangle_{\mathrm{L}^2}-E_{\ell}\delta_{\upsilon \nu}.
    \end{align*}
So for the partial Jacobian at the solution $(\omega^*,t^*)$ we have  
\begin{align}
    \left[ \frac{\partial \mathcal{Q}_\upsilon (\omega^*,t^*)}{\partial t_\nu}\right]_{\upsilon,\nu=1}^{|\mathscr{I}_\ell|}=\mathbf{J}-E_{\ell}\cdot\mathrm{I},
\end{align}
    where $\mathbf{J}$ is the matrix with components $\mathbf{J}_{\upsilon \nu}=\big\langle \Psi_\upsilon(\omega^*),\mathrm{e}^{-T^*}\mathcal{H}_\mathfrak{M}(\omega^*)\mathrm{e}^{T^*}\Psi_\nu(\omega^*)\big\rangle_{\mathrm{L}^2}$. Therefore, the partial Jacobian of the coupled cluster function is invertible if \begin{align}\label{main_cc_det}
        \det(\mathbf{J}-E_{\ell}\cdot\mathrm{I})\ne 0.
    \end{align} 
     Since $\sigma(\mathbf{J})=\sigma(P_{\mathfrak{V}_\ell}\mathrm{e}^{-T^*}\mathcal{H}_{\mathfrak{M}}(\omega^*)\mathrm{e}^{T^*}P_{\mathfrak{V}_\ell})$, where $P_{\mathfrak{V}_\ell}$ is the orthogonal projector on the excited determinants  $\mathfrak{V}_\ell=\mathrm{span}_{\mathbb{C}}\,\mathcal{S}_\ell(\mathfrak{M}(\omega^*))\setminus  \lbrace \Psi_0(\omega^*) \rbrace$, the inequality \eqref{main_cc_det} holds by the non-degeneracy assumption.
\end{proof}

\section{Application: Amplitude Interpolation}\label{sec:interpol}
This section is devoted to numerically investigating the regularity results of the previous section through the use of interpolation. In doing so, we also propose a practical interpolation method for the coupled cluster amplitudes that directly benefits from the high regularity. Theorem \ref{analytic_amplitudes} suggests that, provided the molecular orbitals $\mathfrak{M}(\omega)$ are chosen in a regular manner, the amplitudes $(t_\nu(\omega))_{\nu\in \mathscr{I}}$ are likewise well behaved. For example, in the work of Schrader and Kvaal \cite{schrader_kvaal}, the authors introduced \textit{Procrustes orbitals} to guarantee an analytic set of molecular orbitals. The primary issue with using canonical molecular orbitals obtained from the Hartree–Fock method, ordered according to the Aufbau principle, is the occurrence of index reorderings, that result from orbital energy crossings. A second problem is that, even without index reorderings, the choice is not unique due to an inherent sign ambiguity.

\begin{figure}[H]
\centering{\includegraphics[width=3.5in]{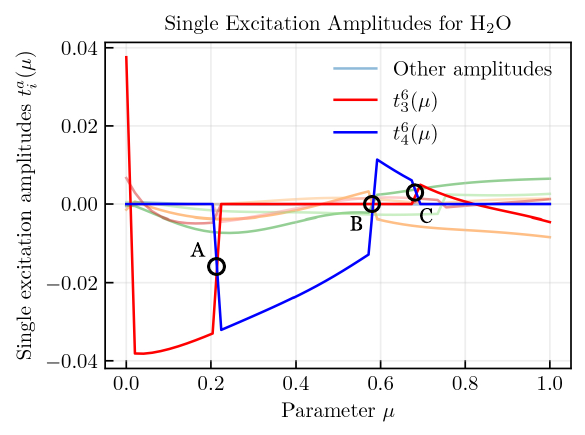}}
    \caption{The single excitation amplitudes $t^{a}_i$ of water in the STO-3G basis parameterized along a trajectory (see \eqref{trajectory_nuc_spac}) in the nuclear coordinate space. }
    \label{fig:MO_ampl}
\end{figure}

Figure \ref{fig:MO_ampl} shows the single excitation amplitudes of the water molecule in a minimal basis along a smooth trajectory in the nuclear coordinate space. Both obstacles that disrupt the regularity of the excitation amplitudes are visible. The highlighted amplitudes $t^6_3$ and $t^6_4$ show the effect of index reordering at points A and C. Moreover, point B shows a discontinuity in $t^6_4$ arising from a different choice of sign. We will make use of a basis transformation that removes these issues. Before proceeding, we introduce two general assumptions.

\begin{assumptionbox}{\textbf{Assumption I:}}\label{Assum_I}
Orbital energy degeneracies only appear for isolated geometries $\omega \in \Omega$.
\end{assumptionbox}
This assumption effectively decouples the discussion from the specific algorithm used to determine the coupled cluster amplitudes. Whenever there is a degeneracy in the orbital energies on an open set $U\subset \Omega$, there is a infinite number of possible choices for the coefficient matrices $C(\omega)$ for each parameter $\omega \in U$. Even if there exists a smooth choice over all of $U$, making that choice is up to the concrete software. Additionally, this occurs only for highly symmetric molecules, making it relatively predictable when it should be expected and when it should not.
\begin{assumptionbox}{\textbf{Assumption II:}}\label{Assum_II}
For all geometries $\omega\in \Omega$ there is a uniform HOMO-LUMO gap $\gamma>0$.
\end{assumptionbox}
The HOMO-LUMO gap assumption is necessary for the basis transformation to exist. Since the single reference coupled cluster method is poorly behaved when HOMO-LUMO crossings occur, it is a relatively mild assumption.\\
\newline
In order to proceed, it is necessary to observe that the parameter dependent Hamiltonian
\begin{align}
\mathcal{H}_\mathfrak{M}(\omega)=\sum_{p,q=1}^{\text{n}_\text{b}} h_{pq}(\omega)a^\dagger_p(\omega)a_q(\omega)+\frac{1}{2}\sum^{\text{n}_\text{b}}_{p,q,r,s=1}q^{rs}_{pq}(\omega)a^\dagger_p(\omega) a^\dagger_r(\omega)a_s(\omega)a_q(\omega)
\end{align}
 is invariant under index permutations. Therefore, even if index reorderings occur in the molecular orbitals, the Hamiltonian preserves its regularity in $\omega$. Moreover, consider for an arbitrary excitation index $\nu\in \mathscr{I}_\ell$ the parameter dependent excitation operator $\mathscr{X}_\nu(\omega)$, built by the corresponding molecular orbitals. Lemma \ref{creation_annihili_analytic} states, that this cluster operator is analytic if the molecular orbitals are. If there are index reorderings on the Hartree–Fock level of theory at isolated geometries $\omega \in \Omega$, then the excitation operator is analytic up to a reordering of the orbitals, i.e. there exists a parameter dependent permutation function \begin{align}\label{permutation_function}
\sigma:\Omega\times\mathscr{I}_\ell\longrightarrow \mathscr{I}_\ell
 \end{align} such that $\mathscr{X}_{\sigma(\omega,\nu)}(\omega)$ is analytic for all $\omega$ and all $\nu\in\mathscr{I}_{\ell}$. For the cluster function $T(\omega,t)$ (see  \eqref{cluster_function}), the permutation of the excitation indices is relevant, as well. The inconvenience arises when considering the parameter dependent coupled cluster solution $t:\Omega\longrightarrow \mathbb{R}^{|\mathscr{I}_\ell|}$. Given the parameter dependent solution in the amplitude space $(t_\nu(\omega))_{\nu\in\mathscr{I}_\ell}$, the solution cluster operator reads
 \begin{align}\label{solution_cluster_op_param}
     T(\omega,t(\omega))=\sum_{\nu \in \mathscr{I}_\ell} t_\nu(\omega)\mathscr{X}_\nu(\omega).
 \end{align}Due to the possible reordering of the molecular orbitals we can not apply Theorem \ref{analytic_amplitudes} here. Therefore, we consider the reordered coupled cluster function $\overline{\mathcal{Q}}$ with \begin{align}
     \overline{\mathcal{Q}}_{\nu}(\omega,t):=& \langle \Psi_{\sigma(\omega,\nu)}(\omega), \mathrm{e}^{-T_{\sigma(\omega)}(\omega,t)}\mathcal{H}_\mathfrak{M}(\omega)\mathrm{e}^{T_{\sigma(\omega)}(\omega,t)}\Psi_0(\omega)\rangle_{\mathrm{L}^2}\\
     =&\langle \mathscr{X}_{\sigma(\omega,\nu)}(\omega)\Psi_0(\omega), \mathrm{e}^{-T_{\sigma(\omega)}(\omega,t)}\mathcal{H}_\mathfrak{M}(\omega)\mathrm{e}^{T_{\sigma(\omega)}(\omega,t)}\Psi_0(\omega)\rangle_{\mathrm{L}^2}
 \end{align}
 where $\sigma$ is the permutation function \eqref{permutation_function} and $T_{\sigma(\omega)}(\omega,t):=\sum_{\nu \in \mathscr{I}_\ell} t_\nu\mathscr{X}_{\sigma(\omega, \nu)}(\omega)$. Note that by our  Assumption I and Assumption II the reference determinant $\Psi_0(\omega)$ is analytic up to possible phase flips. Since $\Psi_0(\omega)$ appears on both sides of the inner product, they cancel. The reordered coupled cluster function $\overline{\mathcal{Q}}$ fits the setting of Theorem \ref{analytic_amplitudes}, and with the non-degeneracy assumption, provides analytic solutions $(\bar{t}_\nu(\omega))_{\nu\in \mathscr{I}_\ell}$. Furthermore, by the relation
\begin{align}\label{smooth_ampl_equ_op}
    T_{\sigma(\omega)}(\omega, \bar{t}(\omega))=\sum_{\nu \in \mathscr{I}_\ell} \bar{t}_\nu(\omega)\mathscr{X}_{\sigma(\omega, \nu)}(\omega)=\sum_{\nu \in \mathscr{I}_\ell} \bar{t}_{\sigma(\omega,\nu)^{-1}}(\omega)\mathscr{X}_{\nu}(\omega)=T(\omega,(\bar{t}_{\sigma(\omega,\nu)^{-1}}(\omega))_{\nu\in \mathscr{I}_\ell}),
\end{align}
and the local uniqueness of the solution provided by the implicit function theorem, it follows that indeed 
\begin{align*}
    (t_{\nu}(\omega))_{\nu\in \mathscr{I}_\ell}=(\bar{t}_{\sigma(\omega,\nu)^{-1}}(\omega))_{\nu\in \mathscr{I}_\ell}.
\end{align*} So the amplitudes vary smoothly besides the multiplication with a suitable permutation matrix that directly corresponds with the index relabeling on the Hartree–Fock level of theory, i.e. in the coefficient matrix. The most obvious strategy to restore analyticity would be to attempt to obtain the permutation matrix that reorders the coupled cluster amplitudes. A reasonable approach is to employ a transformation that removes the irregularities caused by index relabelings without requiring any a priori analysis of the orbital energy crossing behavior.  We will show that the tensor transformation of the coupled cluster amplitudes from the molecular-orbital basis to the atomic-orbital basis corrects both index flips and any phase flips arising from inconvenient sign choices. Therefore, the transformed amplitudes retain the full analyticity that we established. Henceforth, we use the notation $\mathbb{O}:=\mathbb{R}^{\text{n}_\text{occ}}$ for the occupied space, $\mathbb{V}:=\mathbb{R}^{\text{n}_\text{virt}}$ for the virtual space and $\mathbb{B}:=\mathbb{R}^{\text{n}_\text{b}}$. The coupled cluster amplitudes can be stored in a tensor with size of the respective excitation rank. The rank $k\le N$ excitation amplitudes are stored in the tensor
\begin{align}\label{exc_tensor}
    \mathbf{T}_k:=\sum_{i_1,\ldots,i_k=1}^{\text{n}_\text{occ}}\sum^{\text{n}_\text{virt}}_{a_1,\ldots,a_{k}=1}t^{a_1,\ldots,a_k}_{i_1,\ldots,i_k} \big( \mathbf{v}_{a_1}\otimes \ldots \otimes \mathbf{v}_{a_k}  \big) \otimes \big(\mathbf{u}_{i_1}\otimes \ldots \otimes \mathbf{u}_{i_k}\big),
\end{align}
where $\lbrace \mathbf{v}_{a}\rbrace_{a=1}^{\text{n}_\text{virt}}$ is the standard basis of $\mathbb{V}$ and $\lbrace \mathbf{u}_{i}\rbrace_{i=1}^{\text{n}_\text{occ}}$ is the standard basis of $\mathbb{O}$. By construction $\mathbf{T}_k\in \mathbb{V}^{\otimes k}\otimes \mathbb{O}^{\otimes k}$. Theorem \ref{analytic_C} gives us, under certain assumptions, the existence of an analytic coefficient matrix in the nuclear displacement. The matter becomes algorithmically even more manageable if the orbital energies are completely separated, since the choice of the coefficient matrix as a function of the nuclear geometries is then unique up to the choice of signs in the columns. Furthermore, this convenience remains up to index reordering if one only considers isolated geometries where orbital energy crossings occur. Therefore, if we stay in that framework and consider the coefficient matrix function $C(\omega)$, there exists a permutation matrix function $\mathscr{P}:\Omega\longrightarrow \mathbb{R}^{\text{n}_\text{b}\times \text{n}_\text{b}}$ and integer valued functions $g_{\text{occ}},\, g_{\text{virt}}:\Omega \longrightarrow \mathbb{N}$ such that
\begin{align}\label{analytic_coeff_matrix_total}
    C(\omega)=(-1)^{g_{\text{occ}}(\omega)+g_{\text{virt}}(\omega)}\overline{C}(\omega)\mathscr{P}(\omega),
\end{align}
where $\overline{C}: \Omega\longrightarrow \mathbb{R}^{\text{n}_\text{b}\times \text{n}_\text{b}}$ is the analytic version of the coefficient matrix that solves the parameter dependent Roothaan equation \eqref{param_dep_roothaan}. Without loss of generality, we henceforth interpret $C(\omega)$ not as the coefficient matrix in an orthonormal orbital basis, but as the general (algorithmic) coefficient matrix function satisfying the $\mathbf{S}(\omega)$-orthonormality condition 
\begin{align}
    C^\top(\omega)\mathbf{S}(\omega)C(\omega)=\mathrm{I},
\end{align}
which is provided by most state-of-the-art quantum chemistry algorithms.
This choice does not limit the formulation, because any orthonormalized coefficient matrix $\tilde{C}(\omega)$ can be transformed through the relation $\tilde{C}(\omega)=\mathbf{S}^{1/2}(\omega)C(\omega)$.
For simplicity we will define $g(\omega):=g_{\text{occ}}(\omega)+g_{\text{virt}}(\omega)$. Moreover, if we assume that there is a uniform HOMO-LUMO gap $\gamma>0$ across all geometries $\omega\in \Omega$, then, leveraging the structure of \eqref{occ_virt_C_matrix},  $\mathscr{P}(\omega)$ has the block form consisting of two permutation matrices $\mathscr{P}_{\text{occ}}(\omega)\in \mathbb{R}^{\text{n}_\text{occ}\times \text{n}_\text{occ} }$ and $\mathscr{P}_{\text{virt}}(\omega)\in \mathbb{R}^{\text{n}_\text{virt}\times \text{n}_\text{virt} }$, i.e. 
\begin{align}\label{permutation_matrix_func_block}
\mathscr{P}(\omega) = \left( \begin{array}{c|c}
  \mathscr{P}_{\text{occ}}(\omega) & \mathbf{0} \\
  \hline
  \mathbf{0} & \mathscr{P}_{\text{virt}}(\omega)
\end{array} \right).
\end{align}
This yields the existence of two analytic matrix valued functions $\overline{C}_{\text{occ}}:\Omega\longrightarrow \mathbb{R}^{\text{n}_\text{b}\times \text{n}_\text{occ}}$ and $\overline{C}_{\text{virt}}:\Omega\longrightarrow \mathbb{R}^{\text{n}_\text{b}\times \text{n}_\text{virt}}$, such that for any $\omega \in \Omega$
\begin{align}
    C_{\text{occ}}(\omega)=(-1)^{g_{\text{occ}}(\omega)}\overline{C}_{\text{occ}}(\omega)\mathscr{P}_{\text{occ}}(\omega),\quad \text{and}\quad C_{\text{virt}}(\omega)=(-1)^{g_{\text{virt}}(\omega)}\overline{C}_{\text{virt}}(\omega)\mathscr{P}_{\text{virt}}(\omega).
\end{align}
This decomposability is quite important for the tensor transformation to cure any causes of discontinuities that can arise. Note that for any excitation rank $k\le N$, the excitation tensor $\mathbf{T}_k$ will vary in $\omega$ according to
\begin{align}\label{param_dep_k_fold_ex_tens}
    \mathbf{T}_k(\omega)&=\sum_{\substack{i_1,\ldots,i_k\le\text{n}_\text{occ}\\a_1,\ldots,a_{k}\le \text{n}_\text{virt}}} t^{a_1,\ldots,a_k}_{i_1,\ldots,i_k}(\omega) \big( \mathbf{v}_{a_1}\otimes \ldots \otimes \mathbf{v}_{a_k}  \big) \otimes \big(\mathbf{u}_{i_1}\otimes \ldots \otimes \mathbf{u}_{i_k}\big)\\
    &=(-1)^{kg(\omega)}\hspace{-8pt}\sum_{\substack{i_1,\ldots,i_k\le\text{n}_\text{occ}\\a_1,\ldots,a_{k}\le \text{n}_\text{virt}}} \bar{t}^{\;a_1,\ldots,a_k}_{\;i_1,\ldots,i_k}(\omega) \left(\bigotimes^{k}_{p=1}\mathscr{P}^*_{\text{virt}}(\omega)\mathbf{v}_{a_p}   \right) \otimes \left(\bigotimes^{k}_{q=1}\mathscr{P}^*_{\text{occ}}(\omega)\mathbf{u}_{i_q}   \right).
\end{align}
Hence, starting with any reference geometry $\omega_0\in\Omega$ and the corresponding tensor $\mathbf{T}_k(\omega_0)$, built with respect to a fixed frame $[\mathbf{v}_1,\ldots,\mathbf{v}_{\text{n}_{\text{virt}}}]$ and $[\mathbf{u}_1,\ldots,\mathbf{u}_{\text{n}_{\text{occ}}}]$,  the coupled cluster amplitudes in the tensor $\mathbf{T}_k(\omega)$ evolve independent of the reordering in the rows of the coupled cluster function, but the chosen frame gets reordered depending on $\omega$ and the corresponding row permutation in $\mathcal{Q}(\omega,t)$, due to the crossings on the Hartree–Fock level of theory. The rank $k$ excitation tensor \eqref{param_dep_k_fold_ex_tens} can be compactly written as
\begin{align}\label{param_exc_decomp}
    \mathbf{T}_k(\omega)=(-1)^{kg(\omega)} \big((\mathscr{P}_{\text{virt}}^*(\omega))^{\otimes k}\otimes (\mathscr{P}^*_{\text{occ}}(\omega))^{\otimes k}\big) \overline{\mathbf{T}}_k,
\end{align}
where
\begin{align}
    \overline{\mathbf{T}}_k(\omega):=\sum_{i_1,\ldots,i_k=1}^{\text{n}_\text{occ}}\sum^{\text{n}_\text{virt}}_{a_1,\ldots,a_{k}=1}\bar{t}^{\;a_1,\ldots,a_k}_{\;i_1,\ldots,i_k}(\omega) \big( \mathbf{v}_{a_1}\otimes \ldots \otimes \mathbf{v}_{a_k}  \big) \otimes \big(\mathbf{u}_{i_1}\otimes \ldots \otimes \mathbf{u}_{i_k}\big).
\end{align}
Recall \eqref{smooth_ampl_equ_op}, i.e. that the coupled cluster amplitudes appearing in $\overline{\mathbf{T}}_k(\omega)$ are smooth functions in $\omega$, hence $\overline{\mathbf{T}}_k:\Omega\longrightarrow \mathbb{V}^{\otimes k}\otimes \mathbb{O}^{\otimes k}$ carries the same regularity as the coefficient matrix $\overline{C}(\omega)$. The tensor transformation from molecular orbital basis to atomic orbital basis for excitation rank $k\le N$ is given by
\begin{align}\label{MO_to_AO}
\mathcal{A}_k:\mathbb{V}^{\otimes k}\otimes \mathbb{O}^{\otimes k} \longrightarrow \mathbb{B}^{\otimes k}\otimes \mathbb{B}^{\otimes k},\quad \mathbf{T}\mapsto
    \big(C_{\text{virt}}^{\otimes k}\otimes C_{\text{occ}}^{\otimes k}\big)\mathbf{T}.
\end{align} 
Note that the transformation \eqref{MO_to_AO} remains well-defined for alternative partitions of the coefficient matrix \eqref{occ_virt_C_matrix}, provided a spectral gap exists between the eigenvalues at the point of splitting. Using the orthonormality constraint $C^*(\omega)S(\omega)C(\omega)=\mathrm{I}$, for any tensor transformation \eqref{MO_to_AO} there is a left-inverse transformation 
\begin{align}\label{AO_to_MO}
    \mathcal{A}_k^{-1}:\mathbb{B}^{\otimes k}\otimes \mathbb{B}^{\otimes k} \longrightarrow \mathbb{V}^{\otimes k}\otimes \mathbb{O}^{\otimes k},\quad \mathbf{T}\mapsto \big((C^*_{\text{virt}}\mathbf{S})^{\otimes k}\otimes (C^*_{\text{occ}}\mathbf{S})^{\otimes k}\big)\mathbf{T}.
\end{align}
\begin{theorem}\label{MO_AO_makes_high_reg}
    Let $\Omega\subset \mathbb{R}^{3M}$ be subdomain of the nuclear coordinate space and $\mathfrak{A}(\omega)$ a set of atomic orbitals for all $\omega\in\Omega$, that provide a uniform HOMO-LUMO gap $\gamma>0$ for the discrete Hamiltonian $\mathcal{H}_\mathfrak{M}(\omega)$ of interest. Furthermore, assuming the orbital energy degeneracies occur only at isolated points, the regularity of the transformed tensor $(\mathcal{A}_k\mathbf{T}_k)(\omega)=\mathcal{A}_k(\omega)\mathbf{T}_k(\omega)$ for any excitation rank $k\le N$ is identical to that of $\overline{C}(\omega)$ . 
\end{theorem}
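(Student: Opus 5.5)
The plan is to substitute the decompositions \eqref{analytic_coeff_matrix_total}--\eqref{permutation_matrix_func_block} and \eqref{param_exc_decomp} into the definition \eqref{MO_to_AO}. The permutation and sign factors should then cancel pairwise, leaving an expression built only from $\overline{C}_{\text{occ}}$, $\overline{C}_{\text{virt}}$ and $\overline{\mathbf{T}}_k$. Assumption II (uniform HOMO-LUMO gap) gives the block-diagonal form \eqref{permutation_matrix_func_block} of $\mathscr{P}(\omega)$. This is what allows
\begin{align*}
C_{\text{occ}}(\omega)=(-1)^{g_{\text{occ}}(\omega)}\overline{C}_{\text{occ}}(\omega)\mathscr{P}_{\text{occ}}(\omega),\qquad C_{\text{virt}}(\omega)=(-1)^{g_{\text{virt}}(\omega)}\overline{C}_{\text{virt}}(\omega)\mathscr{P}_{\text{virt}}(\omega)
\end{align*}
to be written separately. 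Assumption I (isolated degeneracies) guarantees that $\overline{C}$ exists as an analytic, or at least as regular as possible, choice, unique up to column signs and reorderings.

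First, I would use the mixed-product property of Kronecker/tensor products,
\begin{align*}
\mathcal{A}_k(\omega)\mathbf{T}_k(\omega)=(-1)^{k g(\omega)}(-1)^{k g(\omega)}\big((\overline{C}_{\text{virt}}\mathscr{P}_{\text{virt}}\mathscr{P}_{\text{virt}}^*)^{\otimes k}\otimes(\overline{C}_{\text{occ}}\mathscr{P}_{\text{occ}}\mathscr{P}_{\text{occ}}^*)^{\otimes k}\big)\overline{\mathbf{T}}_k(\omega).
\end{align*}
Since permutation matrices are orthogonal, $\mathscr{P}\mathscr{P}^*=\mathrm{I}$, and the sign factor squares to $1$. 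Hence
\begin{align*}
\mathcal{A}_k(\omega)\mathbf{T}_k(\omega)=\big(\overline{C}_{\text{virt}}(\omega)^{\otimes k}\otimes\overline{C}_{\text{occ}}(\omega)^{\otimes k}\big)\overline{\mathbf{T}}_k(\omega).
\end{align*}

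Second, I would conclude regularity. Each entry of the right-hand side is a finite sum of products of entries of $\overline{C}$ and of the amplitudes $\bar t$. By \eqref{smooth_ampl_equ_op} and Theorem \ref{analytic_amplitudes}, applied to the reordered function $\overline{\mathcal{Q}}$ built from the analytic orbitals given by $\overline{C}$, the amplitudes $\bar t$ inherit the regularity of $\overline{C}$. Finite sums and products preserve analyticity, or $\mathcal{C}^m$ regularity. So the transformed tensor is exactly as regular as $\overline{C}$.

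The main obstacle is the sign bookkeeping. The decomposition \eqref{analytic_coeff_matrix_total} uses a single global sign per block, but in reality each column may flip independently. The fix is to absorb all column signs into the matrices $\mathscr{P}_{\text{occ}}$ and $\mathscr{P}_{\text{virt}}$, generalizing them to signed permutation matrices. These are still orthogonal, so the cancellation $\mathscr{P}\mathscr{P}^*=\mathrm{I}$ is unaffected, provided I check that the amplitudes transform by the same signed permutation. That check is straightforward: each excitation operator picks up the product of its orbitals' signs, and the amplitude must compensate by \eqref{smooth_ampl_equ_op}.
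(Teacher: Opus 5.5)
Your proposal is correct and follows the paper's proof essentially step for step. You substitute the block decompositions of $C$ and \eqref{param_exc_decomp} into $\mathcal{A}_k$, use the mixed-product property to cancel $\mathscr{P}\mathscr{P}^*$ and the squared sign $(-1)^{2kg}$, and conclude from the regularity of $\overline{\mathbf{T}}_k$ inherited through the reordered function $\overline{\mathcal{Q}}$. Replacing the single per-block signs by signed permutation matrices also improves on the paper's bookkeeping, which does not capture independent column sign flips, and the cancellation still goes through because it only uses orthogonality of the block-diagonal transformation.
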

\begin{proof}
    The preceding analysis of the parameter-dependent excitation tensor $\mathbf{T}_k(\omega)$ makes the proof straightforward. Using \eqref{param_exc_decomp}
    \begin{align*}
        (\mathcal{A}_k\mathbf{T}_k)(\omega)&=\big(C_{\text{virt}}^{\otimes k}(\omega)\otimes C_{\text{occ}}^{\otimes k}(\omega)\big)\mathbf{T}_k(\omega)\\
        &=\big(C_{\text{virt}}^{\otimes k}(\omega)\otimes C_{\text{occ}}^{\otimes k}(\omega)\big)(-1)^{kg(\omega)} \big((\mathscr{P}_{\text{virt}}^*(\omega))^{\otimes k}\otimes(\mathscr{P}_{\text{occ}}^*(\omega))^{\otimes k}\big) \overline{\mathbf{T}}_k(\omega)\\
        &=(-1)^{kg(\omega)}\big((C_{\text{virt}}(\omega)\mathscr{P}_{\text{virt}}^*(\omega))^{\otimes k}\otimes (C_{\text{occ}}(\omega)\mathscr{P}_{\text{occ}}^*(\omega))^{\otimes k}\big) \overline{\mathbf{T}}_k(\omega)\\
        &=(-1)^{2kg(\omega)}\! \big((\overline{C}_{\text{virt}}(\omega)(\mathscr{P}_{\text{virt}}\mathscr{P}_{\text{virt}}^*)(\omega))^{\otimes k}\otimes(\overline{C}_{\text{occ}}(\omega)(\mathscr{P}_{\text{occ}}\mathscr{P}_{\text{occ}}^*)(\omega))^{\otimes k}\big) \overline{\mathbf{T}}_k(\omega)\\
        &=\big(\overline{C}^{\otimes k}_{\text{virt}}(\omega)\otimes \overline{C}^{\otimes k}_{\text{occ}}(\omega)\big) \overline{\mathbf{T}}_k(\omega).
    \end{align*}
    The assertion follows directly, as $\overline{\mathbf{T}}_k$ has the regularity properties of $\overline{C}$.
\end{proof}

\subsection{Excitation Tensor Interpolation}\label{exc_tens_inter} Suppose one is interested in solving the electronic structure problem for a large number of nuclear geometries, for instance, to find an appropriate wavefunction or to calculate the corresponding energy and forces. The naive approach is simply interpolating the raw amplitudes for a sample set of geometries. As previously discussed, this approach is destined to fail due to the reorderings and phase flips caused by the arbitrary algorithmic choices made during the calculation. Therefore, as a strategy, which does not require any a priori knowledge of the orbital crossing behavior, we suggest to leverage Theorem \ref{MO_AO_makes_high_reg} in order to bypass the irregularities of the molecular orbital basis by transforming into the atomic orbital basis. Henceforth, we only consider the case, where the nuclear displacement is provided by a trajectory (see Definition \ref{trajectory_nuc_spac}), i.e. we consider a path
\begin{align}
    \Gamma:I\longrightarrow \mathbb{R}^{3M}.
\end{align}
This is the natural setting for Theorem \ref{analytic_C}. Hence, by choosing $\Gamma$ as an analytic trajectory, we can expect that the reordered coefficient matrix $\overline{C}(\mu)$, of the canonical orbitals, varies analytically in $\mu$, for any implementation of the Roothaan algorithm. The interpolation method via the molecular orbital to atomic orbital basis transformation on the excitation tensors splits into an offline and an online stage. In practice, to effectively control the prefactors in memory and time without compromising asymptotic efficiency, the separation of stages must be handled more strategically than simply storing the transformed tensors during the offline phase. Further details regarding these refactoring subtleties, as well as the complexity analysis, are discussed later. 
\subsubsection{Offline Stage of the Interpolation}\label{offline_stage} In the offline stage of the interpolation we fix a set of sample points $\mathcal{N}:=\lbrace\mu_1,\ldots,\mu_d\rbrace$. For this set, an ordinary coupled cluster calculation of excitation level $\ell\le N$ is performed and provides the sets of excitation tensors
\begin{align}\label{offline_exc_ten}
    \big\lbrace\mathbf{T}_k(\mu_i)\big\rbrace_{k=1}^{\ell},\qquad (\forall i\le d).
\end{align}
Furthermore, the coefficient matrices provided by the Hartree–Fock calculations 
\begin{align}
    \big\lbrace C(\mu_1),\ldots, C(\mu_d)\big\rbrace,
\end{align}
are saved.
\subsubsection{Online Stage of the Interpolation}\label{online_stage} Given a $\mu\in I$ the approximation of excitation tensor $\mathbf{T}_k(\mu)$ of rank $k\le \ell$ is given by 
\begin{align}\label{tensor_interpolant}
    \widetilde{\mathbf{T}}_k(\mu):=\sum_{j=1}^d\underbrace{\mathcal{A}_k^{-1}(\mu)\mathcal{A}_k(\mu_j)}_{=:\, \mathscr{L}_k(\mu,\mu_j)}\mathbf{T}_k(\mu_j)\mathrm{L}_j (\mu)=\sum^{d}_{j=1} \mathscr{L}_k(\mu,\mu_j)\mathbf{T}_k(\mu_j)\mathrm{L}_j(\mu),
\end{align}
where $\mathrm{L}_j$ are the Lagrange polynomials with respect to $\mathcal{N}$. The major difference to polynomial interpolation on the excitation tensor \eqref{offline_exc_ten}, is that, we transform the tensor into the smoother version in the atomic orbital basis, and return them, after polynomial interpolation, to the molecular basis via applying the left inverse transform at the current parameter $\mu\in I$. Note that for any tensor $\mathbf{T}\in \mathbb{V}^{\otimes k}\otimes \mathbb{O}^{\otimes k}$ one has
\begin{align}\label{B_transform}
    \mathscr{L}_k(\mu,\mu_j)\mathbf{T}=\big(\mathscr{L}_{\text{virt}}^{\otimes k}(\mu,\mu_j)\otimes \mathscr{L}_{\text{occ}}^{\otimes k}(\mu,\mu_j)\big)\mathbf{T},
\end{align}
with the virtual transformation $\mathscr{L}_{\text{virt}}(\mu,\mu_j):=C^*_{\text{virt}}(\mu)\mathbf{S}(\mu)C_{\text{virt}}(\mu_j)\in \mathbb{R}^{\text{n}_{\text{virt}}\times \text{n}_{\text{virt}}}$ and the occupied transformation $\mathscr{L}_{\text{occ}}(\mu,\mu_j):=C^*_{\text{occ}}(\mu)\mathbf{S}(\mu)C_{\text{occ}}(\mu_j) \in \mathbb{R}^{\text{n}_{\text{occ}}\times \text{n}_{\text{occ}}}$.\\
\newline
Since the transformations \eqref{MO_to_AO} and \eqref{AO_to_MO} are linear, the approximation error can be written as
\begin{align*}
    \|\widetilde{\mathbf{T}}_k(\mu)-\mathbf{T}_k(\mu)\|_F&=\Big\| \sum^{d}_{\ell=1}\mathcal{A}_k^{-1}(\mu)(\mathcal{A}_k\mathbf{T}_k)(\mu_\ell)\mathrm{L}_\ell(\mu)-\mathcal{A}_k^{-1}(\mu)(\mathcal{A}_k\mathbf{T}_k)(\mu)\Big\|_F\\
    &\le\|\mathcal{A}_k^{-1}(\mu)\|_2 \underbrace{\Big\| \sum^{d}_{\ell=1}(\mathcal{A}_k\mathbf{T}_k)(\mu_\ell)\mathrm{L}_\ell(\mu)-(\mathcal{A}_k\mathbf{T}_k)(\mu)\Big\|_F}_{=:\,\mathbf{e}^\mu_k(\mathcal{N})}=\|\mathcal{A}_k^{-1}(\mu)\|_2\,\mathbf{e}^{\mu}_k(\mathcal{N}),
\end{align*}
where $\mathbf{e}^\mu_k(\mathcal{N})$ denotes the polynomial interpolation error of the excitation tensor in the atomic orbital basis with respect to the interpolation nodes $\mathcal{N}$. Furthermore, when choosing the Frobenius norm, the orthogonality relation $\big(C^*(\mu)\mathbf{S}^{1/2}(\mu)\big)\big(\mathbf{S}^{1/2}(\mu)C(\mu)\big)=\mathrm{I}$ provides the error estimates of the back transform
\begin{align*}
    \|\mathcal{A}_k^{-1}(\mu)\|_2&=\|C^*_{\text{virt}}(\mu)\mathbf{S}(\mu)\|_2^k\|C^*_{\text{occ}}(\mu)\mathbf{S}(\mu)\|_2^k\\
    &=\|C^*_{\text{virt}}(\mu)\mathbf{S}^{1/2}(\mu)\mathbf{S}^{1/2}(\mu)\|_2^k\|C^*_{\text{occ}}(\mu)\mathbf{S}^{1/2}(\mu)\mathbf{S}^{1/2}(\mu)\|_2^k\\
    &\le \|\mathbf{S}^{1/2}(\mu)\|_2^{2k}\|C^*_{\text{virt}}(\mu)\mathbf{S}^{1/2}(\mu)\|_2^k\|C^*_{\text{occ}}(\mu)\mathbf{S}^{1/2}(\mu)\|_2^k\\
    &= \|\mathbf{S}^{1/2}(\mu)\|_2^{2k}.
\end{align*}
Combining this estimate with the previous one gives the total estimate for the excitation tensor $\mathbf{T}_k$ of rank $k\le \ell$ at point $\mu$ via
\begin{align}
    \|\widetilde{\mathbf{T}}_k(\mu)-\mathbf{T}_k(\mu)\|_F\le \|\mathbf{S}(\mu)\|_2^{k}\mathbf{e}^{\mu}_k(\mathcal{N}).
\end{align}
The interpolation error $\mathbf{e}^\mu_k(\mathcal{N})$ is governed by the principles of classical approximation theory for polynomial interpolation (see, e.g. \cite{agarwal1993error}).

\subsection{Complexity Analysis of the Interpolation}

The interpolation of the excitation tensors $\mathbf{T}_k(\mu)$ boils down to interpolating the amplitudes in the atomic orbital basis, which are the coefficients of $(\mathcal{A}_k\mathbf{T}_k)(\mu)$, and transforming them back in the online phase. In the context of asymptotic analysis, the system size $N$ and the number of snapshots $d$ remain the only relevant parameters. Since $\text{n}_{\text{b}}$, $\text{n}_{\text{occ}}$ and $\text{n}_{\text{virt}}$ all scale linearly with the system size $N$, they can be treated equivalently. Nevertheless, we will split the considerations into each of the three quantities, as they possess significantly different prefactors, that can be decisive in practical computations. As the level of approximation gets closer to chemical accuracy, we usually have $\text{n}_{\text{b}} \approx \text{n}_{\text{virt}}$ and therefore $\text{n}_{\text{occ}}\ll \text{n}_{\text{virt}}$. For all $k\le \ell$ we have $(\mathcal{A}_k\mathbf{T}_k)(\mu)\in \mathbb{B}^{\otimes k}\otimes \mathbb{B}^{\otimes k}$, so the memory scaling in the offline stage would be $\mathcal{O}(\text{n}_{\text{b}}^{2k} d)$. Therefore, approximating the whole coupled cluster wavefunction of excitation level $\ell \le N$ scales in offline memory according to $\mathcal{O}(\text{n}_{\text{b}}^{2\ell} d)$. This memory scaling is already considered intractable, even for CCSD, where $\ell=2$. The structure of $\mathcal{A}_k$ and $\mathcal{A}_k^{-1}$ allows this issue to be avoided by working with $\mathscr{L}_k(\mu,\mu_j)$ directly (see \eqref{B_transform}), i.e. shifting all required contraction operations to the online stage. Building $\mathscr{L}_{\text{virt}}(\mu,\mu_j)$ and $\mathscr{L}_{\text{occ}}(\mu,\mu_j)$ in the online stage are cheap matrix multiplications, that can be neglected when thinking about the total computational scaling. The entire transformation $\mathscr{L}_k(\mu,\mu_j)$ can be performed by applying step by step  the matrices $\mathscr{L}_{\text{virt}}(\mu,\mu_j)$ and $\mathscr{L}_{\text{occ}}(\mu,\mu_j)$ from left and right, respectively, on each factor of the tensor. For example, applying a virtual transformation $\mathscr{L}_{\text{virt}}(\mu,\mu_j)$ from the left on the $i\le k$ virtual index of a tensor $\mathbf{T}\in \mathbb{V}^{\otimes k}\otimes \mathbb{O}^{\otimes k}$ requires the calculation of the \textit{n-mode product} $\mathbf{T}\times_i \mathscr{L}_{\text{virt}}(\mu,\mu_j)$ (see e.g. \cite{tensor_ana}), where the components of the new tensor are
\begin{align}\label{n_way_tensor}
    \big(\mathbf{T}\times_i \mathscr{L}_{\text{virt}}(\mu,\mu_j)\big)^{a_1\ldots a_{i-1}s a_{i+1}\ldots a_k}_{i_1\ldots i_k}= \sum^{\text{n}_{\text{virt}}}_{a_i=1}t^{a_1 \ldots a_k}_{i_1\ldots i_k}\big(\mathscr{L}_{\text{virt}}(\mu,\mu_j)\big)_{s a_i}.
\end{align}
Calculating all new components \eqref{n_way_tensor} has a scaling of time complexity of $\mathcal{O}(\text{n}_{\text{virt}}^{k+1} \text{n}_\text{occ}^k)$. Since $ \text{n}_{\text{occ}} \ll \text{n}_{\text{virt}}$, it is sufficient to count only the virtual transformations, i.e. contractions with  $\mathscr{L}_{\text{virt}}(\mu,\mu_j)\in \mathbb{R}^{\text{n}_\text{virt}\times \text{n}_\text{virt}}$ which leads to a total online time scaling of $\mathcal{O}(\text{n}_{\text{virt}}^{k+1} \text{n}_\text{occ}^{k}k)$ to calculate the entire $\mathscr{L}_k(\mu,\mu_j)\mathbf{T}_k(\mu_j)$, and hence $\mathcal{O}(\text{n}_{\text{virt}}^{k+1} \text{n}_\text{occ}^{k} kd  )$ to build the entire interpolant \eqref{tensor_interpolant}. Notably, the memory at no point exceeds the scaling of $\mathcal{O}( \text{n}_{\text{virt}}^\ell \text{n}_\text{occ}^\ell d )$. The time complexity in the offline phase of course scales as the respective exact coupled cluster method with excitation level $\ell \le N$ multiplied by the number of snapshots, which is $\mathcal{O}(\text{n}_{\text{virt}}^{\ell+2}\text{n}^\ell_\text{occ}d)$ or generally $\mathcal{O}(N^{2\ell+2}d)$. However, in the online phase of the interpolation the time scaling will be $\mathcal{O}(N^{2\ell+1}d)$ and therefore reducing the scaling exponent by one. Due to $\mathbf{T}_k(\mu)\in \mathbb{V}^{\otimes k}\otimes \mathbb{O}^{\otimes k}$ for any $k\le \ell $, one can not expect to go below the time scaling $\mathcal{O}(N^{2\ell}d)$ for any direct interpolation method, that approximates the whole coupled cluster wavefunction of excitation level $\ell\le N$. Surpassing this lower bound likely requires the preliminary use of a low-rank approximation, or perhaps some similar method of tensor compression.

\subsection{Numerical Experiments}\label{NumExp}

In this section, we perform numerical experiments for CCSD, corresponding to excitation level $\ell=2$ in order to illustrate the interpolation method.
When it comes to polynomial-like interpolation methods, interpolation along the \textit{Chebyshev nodes} offers good experimental insight into the regularity of the approximated object. It is well known that Chebyshev interpolation of real analytic functions exhibits exponential error decay as the number of nodes increases, with the rate depending on how far the function can be extended holomorphically into the complex plane \cite{mason2002chebyshev,cheby_tad}. The following numerical experiments are inspired by Born–Oppenheimer molecular dynamics \cite{normal_modes}. In this approach, the time evolution of a molecule is approximated using classical mechanics. Instead of computing the exact quantum mechanical dynamics, one solves Newton’s equations of motion near an equilibrium configuration to obtain an approximate description of the system’s behavior over time. This time evolution is then expressed in terms of the eigenvectors of the Hessian of the energy functional, the \textit{normal modes}, and their corresponding eigenvalues, which are the \textit{normal mode frequencies}. For each numerical experiment a real analytic trajectory $\Gamma$ in the nuclear coordinate space was prepared in the form
\begin{align}\label{experiment_interpolant}
    \Gamma(\mu) = \Gamma_0+\sum^{m}_{s=1} c_s \sin(2\pi \omega_s \mu) \zeta_s.
\end{align}
Here $c_1,\ldots ,c_m\in \mathbb{R}$ are some arbitrary coefficients and $\Gamma_0$ is an equilibrium configuration of given molecule, determined by a geometry optimization on the DFT level and run via \texttt{PySCF} \cite{PYSCF,Pyscf_geom}. Moreover, $\zeta_1,\ldots,\zeta_m \in \mathbb{R}^{3M}$ are some of the normal modes and $\omega_1,\ldots, \omega_m \in \mathbb{R}$ their respective normal mode frequencies. We always consider the parameter domain, or time domain $I=[0,1]$. Therefore, we pick Chebyshev nodes of the first kind $\mathcal{N}_d=\lbrace \mu_0,\ldots, \mu_d \rbrace$ with respect to the unit interval.
\begin{figure}[H]
\centering{\includegraphics[width=6in]{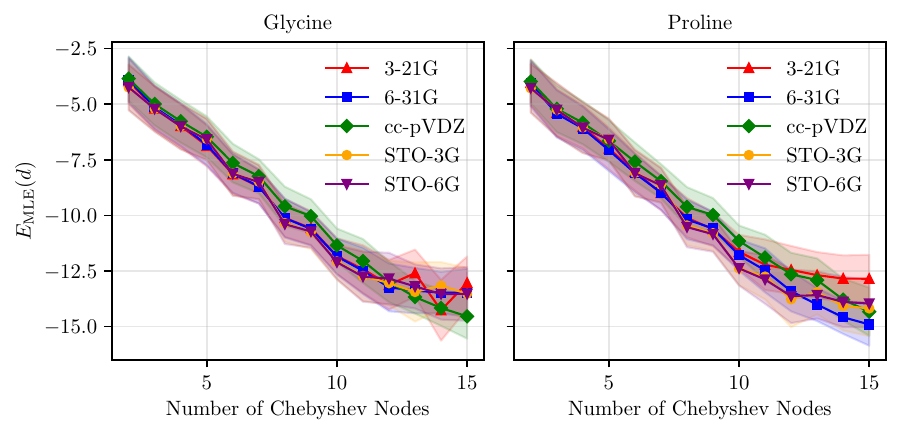}}
    \caption{The $E_{\text{MLE}}(d)$  for the amino acids glycine $\mathrm{C}_2\mathrm{H}_5\mathrm{N}\mathrm{O}_2$ (left) and proline $\mathrm{C}_5\mathrm{H}_9\mathrm{N}\mathrm{O}_2$ (right) along a curve of the form \eqref{experiment_interpolant} for different basis sets. }
    \label{fig:error_decay}
\end{figure}

\begin{table}[htpb]
    \centering
    \caption{Number of occupied orbitals $\text{n}_{\text{occ}}$ and basis functions $\text{n}_{\text{b}}$ for Glycine and Proline.}
    \begin{tabular}{llccccc}
        \toprule
        \textbf{Molecule} & \textbf{Property} & \textbf{STO-3G} & \textbf{STO-6G} & \textbf{3-21G} & \textbf{6-31G} & \textbf{cc-pVDZ} \\
        \midrule
        \multirow{2}{*}{Glycine} 
        & $\text{n}_{\text{occ}}$ & 20 & 20 & 20 & 20 & 20 \\
        & $\text{n}_{\text{b}}$     & 30 & 30 & 55 & 55 & 95 \\
        \midrule
        \multirow{2}{*}{Proline} 
        & $\text{n}_{\text{occ}}$ & 31 & 31 & 31 & 31 & 31 \\
        & $\text{n}_{\text{b}}$     & 49 & 49 & 90 & 90 & 157 \\
        \bottomrule
        \multicolumn{7}{l}{\footnotesize }
    \end{tabular}
\end{table}
As a total error measure, we use the \textit{mean log error} (MLE), which reads
\begin{align}\label{error_metric}
    E_{\text{MLE}}(d)=\frac{1}{|D|}\sum_{\mu \in D}\log(E_\mu(d)),
\end{align}
where $D\subset I$ is the test set that contains 50 equidistant points of the unit interval and $E_\mu(d)$ the relative $\ell^2$-error of the interpolant amplitude vector $\tilde{t}^d(\mu)=(\tilde{t}^d_\nu(\mu))_{\nu\in \mathscr{I}_\ell}$ of $d$ nodes with respect to the true coupled cluster amplitudes $t(\mu)=(t_\nu(\mu))_{\nu\in \mathscr{I}_\ell}$.
In Figure \ref{fig:error_decay} the $E_{\text{MLE}}$ is displayed for the amino acids glycine $\mathrm{C}_2\mathrm{H}_5\mathrm{N}\mathrm{O}_2$ and proline $\mathrm{C}_5\mathrm{H}_9\mathrm{N}\mathrm{O}_2$ for different basis sets as a function of the number of Chebyshev nodes. For either experiment and any basis set an exponential decay with respect of the number nodes is visible. The MLE error represents the mean decay of the relative error in the amplitudes across all geometries, while the offset indicates the variance of these decay rates over the full set of geometries.  Instead of using the interpolant \eqref{tensor_interpolant} as the actual coupled cluster solution, it can also be employed as an initial guess for the quasi-Newton method that is used in practice to solve the CCSD amplitude equations. Typically, the default initial guess for this method is given by the MP2 amplitudes, where all single-excitation amplitudes are set to zero and the double-excitation amplitudes are taken from second order perturbation theory. 

\begin{figure}[H]
\centering{\includegraphics[width=6in]{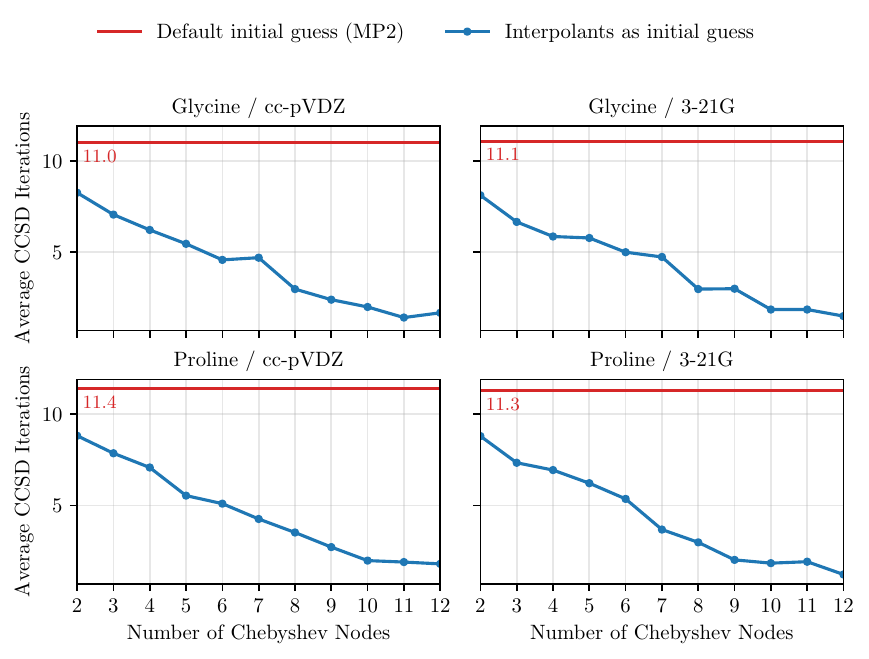}}
    \caption{The upper two plots show the average number of quasi-Newton steps required for convergence of the CCSD method across all parameters in the glycine experiments using the cc-pVDZ basis set (left) and the 3-21G basis set (right). The lower two plots show the corresponding results for the proline experiments.}
    \label{fig:iter_n_decay}
\end{figure}
Although the coupled-cluster interpolation ansatz approaches chemical accuracy exponentially quickly as the number of nodes increases, Figure \ref{fig:iter_n_decay} indicates that iteration counts in the quasi-Newton CCSD solver decrease significantly even at low node numbers. This suggests that high interpolation accuracy is not strictly required to obtain a high-quality initial guess. Finally, we investigated how well the interpolated coupled cluster amplitudes reproduce the correlation energy when it is computed directly from these approximate amplitudes.

\begin{figure}[H]
\centering{\includegraphics[width=6in]{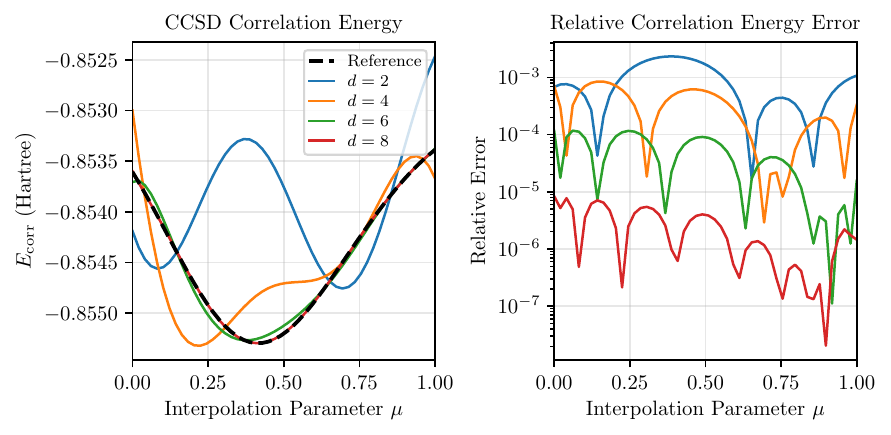}}
    \caption{Correlation energy of glycine with basis set cc-pVDZ along the trajectory $\Gamma$ for different numbers of Chebyshev nodes (left). The corresponding relative error between the reference correlation energy and the correlation energy reconstructed from interpolated amplitudes is shown on the right.}
    \label{fig:PES}
\end{figure}

In Figure \ref{fig:PES} the correlation energy for the experiment with glycine with respect to the cc-pVDZ basis set is shown. The plot on the right-hand side exhibits the characteristic spikes in the relative errors near the interpolation nodes. Notably, the expression of the CCSD correlation energy accumulates errors in a summation, which causes the spikes to dip lower as the number of nodes increases.

\section{Conclusions}\label{conclusion}
In this work, we examined the regularity of coupled cluster amplitudes in the canonical orbital basis with respect to variations in the nuclear coordinates. Because these amplitudes inherit their regularity properties from the Hartree–Fock level of theory, we also established corresponding results for the Hartree–Fock density and molecular orbitals, thereby justifying several key assumptions underlying the coupled cluster framework. The required regularity assumptions for the Hartree–Fock functional are typically satisfied in practice due to the widespread use of Gaussian-type orbitals in the chemistry community, which in fact often leads to analyticity.\\
\newline
A major drawback of canonical orbitals is the possibility of orbital energy crossings, which, in state-of-the-art quantum chemistry algorithms, can lead to index reorderings that compromise the regularity results in practice. To address this issue, a tensor transformation was introduced that not only restores regularity but also enables interpolation and extrapolation schemes for the coupled cluster amplitudes under variations in nuclear displacement, while remaining efficient in both time and memory scaling. Numerical experiments demonstrate an exponential decay in the interpolation error with an increasing number of Chebyshev nodes in the interpolation ansatz, in agreement with the theoretical regularity results derived in the preceding sections. Moreover, using the interpolants as initial guesses leads to a steady reduction in the number of required quasi-Newton steps in the CCSD method, indicating that even a small number of nodes is sufficient to significantly accelerate the CCSD iterative solver. In the subsequent works, we will apply the transformation ansatz in practice by developing an extrapolation ansatz that may be useful in both geometry optimization and molecular dynamics.

\subsection*{Acknowledgments} The authors sincerely thank Filippo Lipparini, Michele Nottoli and Paul Schwahn for their insightful discussions.
We acknowledge the support by the
Stuttgart Center for Simulation Science (SimTech).

\bibliographystyle{macros/plain-doi}
\bibliography{macros/journalabbr,macros/Literature}    

\end{document}